\documentclass{amsart}

\usepackage{amsmath}

\usepackage{amssymb}
\usepackage{amscd}

\numberwithin{equation}{section}

\newif\ifdraft\drafttrue

\draftfalse

\long\def\comdmitry#1{\ifdraft{\marginpar{\sn
#1 \ DK}}\else\ignorespaces\fi}

\font\sn = cmssi8 scaled \magstep0


\newcommand\ba{badly approximable}

\newcommand\da{Diophantine approximation}

\newcommand\hs{homogeneous space}

\newcommand\ssm{\smallsetminus}

\newcommand\name[1]{\label{#1}{\ifdraft{\sn [#1]}\else\ignorespaces\fi}}

\newcommand\eq[2]{{\ifdraft{\ \tt [#1]}\else\ignorespaces\fi}\begin{equation}\label{eq:#1}{#2}\end{equation}}

\newcommand {\equ}[1]     {\eqref{eq:#1}}

\newcommand{\absd}{absolutely decaying}

\newcommand\z{\mathbf{z}}

\newcommand{\Q}{{\mathbb {Q}}}

\newcommand{\Hyp}{{\mathbb {H}}}

\newcommand{\R}{{\mathbb{R}}}

\newcommand{\T}{{\mathbb{T}}}

\newcommand{\Z}{{\mathbb{Z}}}

\newcommand{\N}{{\mathbb{N}}}

\newcommand{\vv}{{\bf{v}}}

\newcommand{\GL}{\operatorname{GL}}
\newcommand{\Lip}{\operatorname{Lip}}

\newcommand{\Isom}{\operatorname{Isom}}

\newcommand{\diam}{\operatorname{diam}}

\newcommand{\dist}{\operatorname{dist}}

\newcommand{\supp}{\operatorname{supp}}

\newcommand {\ignore}[1]  {}

\newcommand{\df}{{\, \stackrel{\mathrm{def}}{=}\, }}

\newcommand{\x}{{\mathbf{x}}}

\newcommand{\vu}{{\bf u}}

\newcommand{\vm}{{\bf m}}

\newcommand{\p}{{\bf p}}

\newcommand{\vp}{{\bf p}}

\newcommand{\BA}{{\bold{BA}}}

\newcommand\hd{Hausdorff dimension}

\newcommand{\rd}{\mathbb{R}^d}
\newcommand{\rn}{\rd}

\newcommand{\zn}{\mathbb{Z}^n}

\newcommand{\y}{\mathbf{y}}

\newcommand{\cl}{\mathcal L}

\newtheorem{thm}{Theorem}[section]
\newtheorem{lem}[thm]{Lemma}
\newtheorem{prop}[thm]{Proposition}
\newtheorem{cor}[thm]{Corollary}
\newtheorem*{corcor}{Corollary}
\newtheorem{example}[thm]{Example}

\newtheorem{defn}[thm]{Definition}

\oddsidemargin=5mm 
\textwidth=5.5in
\topmargin=0mm
\voffset=-13mm
\headheight=5mm \headsep=9mm
\textheight=9.1in \footskip=5mm

\begin{document}

\title[${\bf{BA}}$ is strongly  $C^1$ incompressible]{The set of badly approximable vectors
 \\ is strongly $C^1$ incompressible}
\author[Broderick, Fishman, Kleinbock, Reich, Weiss]{Ryan Broderick, Lior Fishman, Dmitry Kleinbock, Asaf Reich and Barak Weiss}

\address{Department of Mathematics, Brandeis University, Waltham MA 02454-9110, USA} 
\email{ryanb@brandeis.edu,  lfishman@brandeis.edu,  kleinboc@brandeis.edu, azmreich@brandeis.edu}

\address{Department of Mathematics, Ben Gurion University, Be'er Sheva, Israel 84105} 
\email{barakw@math.bgu.ac.il}

\date{June 4, 2011}


\maketitle{}

\begin{abstract}
We prove that the countable intersection of $C^1$-diffeomorphic
images of certain Diophantine sets has full Hausdorff
dimension.  For example, we 
show this for the set of badly
approximable vectors in $\rd$, improving earlier results of Schmidt
and Dani. To prove this, inspired by ideas of
McMullen, we define a new variant of Schmidt's 
$(\alpha,
\beta)$-game and
show that our sets are {\em hyperplane absolute winning\/} 
(HAW),
which in particular implies winning in 
the original game. The HAW property passes automatically to games
played on 
certain
fractals, thus our sets intersect a large class of fractals
in a set of positive dimension. This extends 
earlier results of Fishman to
a more general 
set-up, with
simpler proofs. 
\end{abstract}
\section{Introduction}

We begin with a definition 
introduced by S.G.\ Dani \cite{Dani conference}:
a subset $S$ of $\rd$ is called {\sl incompressible\/} if for any
nonempty open $U\subset \rd$ and any  uniformly bi-Lipschitz sequence
$\{f_i\}$ of maps of $U$ onto (possibly different) open subsets of
$\rd$, the set 
\eq{inters}{\bigcap_{i = 1}^\infty f_i^{-1}(S)}
has   \hd\ $d$. Here a sequence of maps $\{f_i: U\to \rd\}$ is called
{\sl  uniformly bi-Lipschitz\/} if  
\eq{bilip}{\sup_i \Lip(f_i) < \infty\,,}
where $\Lip(f)$ is the bi-Lipschitz constant of $f$, defined to be the infimum of  $C \ge  1$ such that 
$C^{-1}\|\x-\y\| \le\|f(\x) - f(\y)\|\le C \|\x-\y\|$ for all $\x,\y\in U$.
Here and below  $\|\cdot\|$ stands for the Euclidean norm
on $\rd$.
One of the main interesting examples of incompressible sets is the set ${\bf{BA}}_d$
of {\sl badly approximable vectors\/}, namely those $\x \in \rd$ satisfying
\eq{def ba}{ \left\|\x - \frac{\p}{q} \right\| \geq \frac{c}{q^{1+1/d}} }
for some $c=c(\x)>0$ and for any $\p \in \mathbb{Z}^{d}$, $q\in
\mathbb{N}$. 
Its incompressibility is a consequence of a stronger property: namely,
of the fact that ${\bf{BA}}_d$ is a winning set of  
a game introduced in the 1960s by W.M.\ Schmidt \cite{S1, S3} (see \S\ref{games} for definitions and a discussion).

Our goals in this paper are to remove
the hypothesis of uniformity in \equ{bilip}
and to consider intersections of bi-Lipschitz images of $S$ with certain 
closed  subsets $K\subset\rd$.  Let us say that $S\subset\rd$ is  {\sl strongly\/} 
(resp., {\sl strongly $C^1$, strongly affinely\/}) {\sl incompressible on $K$\/} if 
 one has 
\eq{interswithk}{\dim\left(\bigcap_{i = 1}^\infty f_i^{-1}(S)\cap
  K\right) = \dim(U\cap K)} 
for any  open $U\subset \rd$ with $U\cap K\ne\varnothing$ and any
sequence $\{f_i\}$ of  bi-Lipschitz  maps (resp., $C^1$
diffeomorphisms, affine nonsingular maps) of $U$ onto (possibly
different) open subsets of $\rd$ (here and hereafter $\dim$ stands for
the \hd).  
 
It is easy to see that strong incompressibility (in $\rd$) is shared
by subsets of full Lebesgue measure in $\rd$: indeed, if $\rd\ssm S$
has  
Lebesgue measure zero, then the set \equ{inters} has full measure in $U$, and hence full \hd. 
See also \S\ref{vwa} for an example of a strongly incompressible residual subset of $\rd$
of Lebesgue measure zero.
On the other hand, using Schmidt
games one can exhibit 
sets with strong intersection properties
which are Lebesgue-null  and meager, i.e.\ are small
from the point of view of both measure and category. 
Indeed, it
follows from Schmidt's work that 
 any winning subset  of $\R^d$ is
incompressible, and
strongly $C^1$ incompressible if $d = 1$ (the reason being that 
diffeomorphisms in
dimension $1$ are, in Schmidt's 
terminology, {\it local isometries\/}).

\ignore{
We are going to work with a slightly generalized version of the above
definition: if $K$ is a closed subset of $\rd$, let us say that $S$   
 is  {\sl incompressible on $K$\/} if for any  open $U\subset \rd$
 with $U\cap K\ne\varnothing$ and any  uniformly bi-Lipschitz sequence
 $\{f_i\}$ of maps of $U$ onto (possibly different) open subsets of
 $\rd$, one has 
\eq{interswithk}{\dim\left(\bigcap_{i = 1}^\infty f_i^{-1}(S)\cap K\right) = \dim(U\cap K)}
(here and hereafter $\dim$ stands for the \hd). 

Our goal is to investigate, among other things, a possibility of
removing the uniformity  in \equ{bilip}. Let us say that $S\subset\rd$
is  {\sl strongly\/}  
(resp., {\sl strongly $C^1$, strongly affinely\/}) {\sl incompressible
  on $K$\/} if \equ{interswithk} 
holds for any  open $U\subset \rd$ with $U\cap K\ne\varnothing$ and
any   sequence $\{f_i\}$ of  bi-Lipschitz  maps (resp., $C^1$
diffeomorphisms, affine nonsingular maps) of $U$ onto (possibly
different) open subsets of $\rd$. When $K = \rd$ the reference to it
will be omitted, as before. 
It follows from the work of Schmidt that any winning subset of $\R$ is 
strongly incompressible (the reason being that bi-Lipschitz maps in dimension $1$ are, in Schmidt's
terminology, {\it local isometries\/}), 
\comdmitry{So the following question comes up  naturally: does there exist a winning set in $\rd$,
$d > 1$,  which is not strongly (or even $C^1$/affinely)
  incompressible? or maybe even such that the intersection
  \equ{inters}, with no uniform bound on bi-Lipschitz norms of $f_i$,
  is empty??? It would be very nice to 
have an answer in this paper; if not, we need to state it at the end.}
but there is no result like that for $d > 1$.  
}

In a more recent development, it was shown  by the second-named author
\cite{F} that the set ${\bf{BA}}_d$  is   strongly affinely
incompressible. Moreover, the main result of \cite{F} establishes
strong affine incompressibility of  ${\bf{BA}}_d$ on various fractal
subsets of $\rd$, characterized by their ability to support measures 
satisfying certain decay conditions. 
In particular it is proved there that ${\bf{BA}}_d$ is strongly
affinely incompressible on $K = \supp\,\mu$  whenever $\mu$ is a
measure on $\rd$ which is {\it absolutely decaying\/}  
and {\it Ahlfors regular\/}. Examples of such sets  include limit sets of irreducible families of
self-similar contractions of $\rd$ satisfying the open set condition; see \S\ref{meas} for more examples and precise definitions.

 In this paper we generalize the aforementioned result as follows:

  
  
 \begin{thm}
\label{incompr} Let $\mu$ be a measure on $\rd$ which is  absolutely decaying 
and   Ahlfors regular. Then  ${\bf{BA}}_d$  is   strongly $C^1$ incompressible on $K = \supp\,\mu$. \end{thm}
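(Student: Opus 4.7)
\smallskip

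\noindent\textbf{Proof plan.} My plan is to introduce a strengthened variant of Schmidt's $(\alpha,\beta)$-game, the \emph{hyperplane absolute winning} (HAW) game, in which Bob chooses a nested sequence of shrinking balls $B_1\supset B_2\supset\cdots$ while at each stage Alice deletes an arbitrary $\varepsilon_i$-neighborhood of an affine hyperplane in $\rd$ (with $\varepsilon_i$ small relative to the radius of $B_i$). The set $S$ is HAW if Alice has a strategy ensuring that the unique limit point lies in $S$, independently of Bob's play. The proof will proceed in three steps, combined at the end.

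\smallskip

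\noindent\textbf{Step 1 (closure properties of the HAW class).} Show that HAW subsets of $\rd$ are closed under (a) countable intersections, by the usual interleaving of Alice's strategies; (b) bi-Lipschitz images, without uniformity, since each map is treated one at a time; and most importantly (c) $C^1$-diffeomorphic preimages. The point of (c) is that under a $C^1$ diffeomorphism $f$, the preimage of an $\varepsilon$-neighborhood of a hyperplane $H$, intersected with a sufficiently small ball, is contained in a $C\varepsilon$-neighborhood of the affine hyperplane $(Df)^{-1}(H)$, where $C$ depends only on $\|Df\|$ at the current point. Thus an Alice strategy for $S$ converts into an Alice strategy for $f^{-1}(S)$, at the cost of a bounded adjustment of parameters that can be absorbed as long as Bob's balls are taken small enough.

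\smallskip

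\noindent\textbf{Step 2 ($\mathbf{BA}_d$ is HAW).} Adapt Schmidt's classical proof that $\mathbf{BA}_d$ is winning. At each stage of the game, there is at most a bounded number of rationals $\mathbf{p}/q$ at a given ``danger scale'' relative to Bob's current ball. For each such threat, the set of points within $c/q^{1+1/d}$ of $\mathbf{p}/q$ is contained in a ball whose diameter is small compared to Bob's ball, hence in a sufficiently thin neighborhood of an appropriately chosen hyperplane through $\mathbf{p}/q$ perpendicular to the line from Bob's center. Alice deletes this neighborhood, which is exactly a legal HAW move.

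\smallskip

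\noindent\textbf{Step 3 (full dimension on fractals).} Define the HAW game played \emph{on} $K = \supp\mu$ (Bob's balls are centered in $K$). Use absolute decay of $\mu$ to ensure that, after Alice removes a hyperplane neighborhood, Bob still has enough room in $K$ to center a ball of comparable radius; the absolute decay property is tailored precisely to hyperplane neighborhoods. This shows that HAW on $\rd$ implies HAW on $K$. A Cantor-like branching construction within $K$, combined with Ahlfors regularity for the mass-distribution estimate, then yields $\dim(S\cap K\cap U)=\dim(K\cap U)$ for any HAW set $S$.

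\smallskip

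\noindent\textbf{Conclusion and main obstacle.} Assembling the three steps: $\mathbf{BA}_d$ is HAW (Step~2), so each $f_i^{-1}(\mathbf{BA}_d)$ is HAW (Step~1c), so the countable intersection is HAW (Step~1a), and hence meets $K\cap U$ in full dimension (Step~3). The principal technical difficulty is Step~1(c): a $C^1$ diffeomorphism on $U$ is not uniformly bi-Lipschitz, so Alice cannot rescale by a single global constant as in the affine case of \cite{F}. The resolution is that the HAW game is inherently local — once Bob has committed to a small ball $B_i$, the map $f_i$ restricted to $B_i$ is essentially affine up to an error that tends to zero with $\mathrm{diam}(B_i)$, and the affine-hyperplane moves available to Alice are robust enough to absorb this nonlinearity step by step. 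This local-to-global passage is what makes HAW strictly stronger than the ordinary Schmidt-winning property and is the essential new ingredient compared to earlier work.
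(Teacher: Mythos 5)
Your overall architecture is the same as the paper's: show $\BA_d$ is HAW, show the HAW property is preserved under $C^1$ nonsingular preimages and countable intersections, show that absolute decay lets the game be transferred to $K$, and use Ahlfors regularity for the dimension count. Steps 1(c), 1(a) and 3 match the paper's Theorem on $C^1$ stability, its countable-intersection proposition, and its chain ``absolutely decaying $\Rightarrow$ hyperplane diffuse $\Rightarrow$ HAW on $\rd$ implies HAW on $K$ $\Rightarrow$ winning on $K$ $\Rightarrow$ full dimension via Ahlfors regularity.''

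There is, however, a genuine gap in Step 2. Your claim that ``there is at most a bounded number of rationals $\p/q$ at a given danger scale relative to Bob's current ball'' is false: the number of $\p/q$ with $q$ in the relevant range lying in a ball of the relevant radius is unbounded in $k$ (for instance, many distinct denominators can each contribute a point), so Alice cannot dispose of them one hyperplane neighborhood at a time --- she gets only one such deletion per turn, and even spreading the deletions over consecutive turns would not terminate. What is true, and what the paper uses, is the simplex lemma of Davenport--Schmidt (\cite[Lemma 4]{KTV}): if the ball $B(\x,\beta^{k-1}r)$ is small enough, then \emph{all} rationals $\p/q$ with $\beta^{-d(k-1)/(d+1)}\le q<\beta^{-dk/(d+1)}$ lying in that ball are contained in a \emph{single} affine hyperplane $\cl_k$, because $d+1$ affinely independent such points would span a simplex of volume at least $(d!\,q_0\cdots q_d)^{-1}$, which is too large to fit. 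Alice then removes $\cl_k^{(\beta^{k+1}\rho)}$ in one legal move, and the radius bookkeeping gives the constant $c$ in \equ{def ba}. Without this coplanarity input your Step 2 does not go through. Separately, your Step 1(b) (HAW is closed under arbitrary bi-Lipschitz images) is an over-claim: a bi-Lipschitz map need not send a hyperplane neighborhood into a thin neighborhood of any hyperplane, and the paper leaves bi-Lipschitz strong incompressibility of $\BA_d$ as an open question; fortunately this step is not needed for the theorem.
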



Our results are in fact much more general. We use ideas from a recent paper \cite{Mc} of McMullen,
where, among other things, it was proved that ${\bf{BA}}_1$ is strongly incompressible, and introduce a modification
of games considered therein.
More precisely, we show that the set ${\bf{BA}}_d$ has a
property which we  refer to as {\it hyperplane absolute winning\/}, 
abbreviated by 
HAW; see \S\ref{games} for a definition. This property is stable under countable intersection and implies winning
in Schmidt's sense. One of the main results of the present  paper (Theorem \ref{stability}) is that the HAW property is preserved 
by $C^1$ diffeomorphisms; this immediately implies  the case $K = \rd$
of Theorem  \ref{incompr}.  Furthermore, a rather elementary
observation, see Proposition \ref{cap K},  shows that the HAW property  
implies 
a similar property defined with respect to a game played on $K$,
where $K$ is a closed subset of $\rd$ satisfying much less restrictive assumptions that in Theorem \ref{incompr}. More precisely, in \S\ref{fr} we introduce a class of  {\it hyperplane diffuse\/} sets which includes
those satisfying assumptions of Theorem \ref{incompr}, 
and prove the following 

 \begin{thm}
\label{incomprdiffuse} Let $K\subset \rd$ be hyperplane  diffuse,  $U$
an open subset of $\rd$ with 
$U\cap K\ne \varnothing$, $S\subset \rd$ hyperplane absolute winning,
and let $\{f_i\}$ be a sequence of $C^1$ diffeomorphisms of $U$ onto
(possibly different) open subsets of $\rd$. Then the set 
\eq{intersdiffuse}{\bigcap_{i = 1}^\infty f_i^{-1}(S)\cap K}
has positive \hd.\end{thm}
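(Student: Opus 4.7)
The proof is essentially an assembly of the three ingredients the introduction has flagged: stability of HAW under $C^1$ diffeomorphisms (Theorem \ref{stability}), stability of HAW under countable intersections, and the transfer principle from the ambient HAW game to a game played on $K$ (Proposition \ref{cap K}). My plan is to chain these together and then extract a Hausdorff dimension lower bound from the outcome of the game on $K$.

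First, fix $i$ and consider $f_i^{-1}\colon f_i(U)\to U$, which is itself a $C^1$ diffeomorphism between open subsets of $\R^d$. Since $S$ is HAW (as a subset of $\R^d$), Theorem \ref{stability} gives that $f_i^{-1}(S)$, or more precisely its restriction to $U$, is HAW as well. Iterating over $i\in\N$ and using the fact (to be recorded in \S\ref{games}) that HAW sets are stable under countable intersections, we conclude that
\[
S^{*}\df \bigcap_{i=1}^\infty f_i^{-1}(S)
\]
is HAW on $U$. At this stage the $C^1$ hypothesis has been used to its full extent, and $K$ has not yet entered the picture.

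Next, I invoke Proposition \ref{cap K}. Hyperplane diffuseness of $K$ is tailored precisely so that Alice, in the hyperplane absolute game played on $\R^d$, can always force her sequence of balls to have centers in $K$ (or at least to exhibit a legal response against any of Bob's hyperplane neighborhoods); thus a winning strategy for the ambient HAW game can be pulled back to a winning strategy for the analogous game played with balls whose centers lie in $K\cap U$. Applying this to $S^*$, we obtain a winning strategy for Alice, against any play of Bob starting inside $K\cap U$, ensuring that the unique point of the intersection of the nested balls lies in $S^{*}\cap K$.

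Finally, I need to convert this game-theoretic statement into a lower bound for $\dim(S^{*}\cap K)$. This is the one step I expect will require genuine work rather than bookkeeping, and is the technical heart of the argument. The idea, in the style of Schmidt's and McMullen's dimension estimates, is to produce many disjoint legal plays by Bob and then apply Alice's strategy to each: because $K$ is hyperplane diffuse, at each stage of the game Bob can choose from a controlled number of well-separated balls of comparable radius contained in the previous one and meeting $K$, whose centers avoid any prescribed hyperplane neighborhood. Iterating this branching construction produces a Cantor-like subset of $S^{*}\cap K$ whose mass distribution admits a lower bound via Frostman's lemma; the resulting dimension estimate depends only on the diffuseness parameters of $K$ and on the absolute-winning parameter for $S$ in the HAW game. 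In particular the intersection \eqref{eq:intersdiffuse} has positive Hausdorff dimension, completing the proof.
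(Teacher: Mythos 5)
Your proposal is correct and follows essentially the same route as the paper: Theorem \ref{stability} for $C^1$ stability, countable-intersection stability of the HAW property, Proposition \ref{cap K} to transfer the winning strategy to the diffuse set $K$, and a ball-counting/Cantor-type construction on $K$ for the dimension lower bound (the paper packages this last step as Proposition \ref{strongwinning} followed by Theorem \ref{dim}, which rests on Lemma \ref{liorstheorem}). The only bookkeeping difference is that the paper works with $f_i^{-1}(S)\cup U^c$ rather than a ``restriction to $U$'' --- a target set for the game on $\rd$ must be a subset of $\rd$, and the union with $U^c$ is harmless since $\bigcap_i f_i^{-1}(S)\subset U$ --- which does not affect the substance of your argument.
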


This theorem in particular applies to $S = {\bf{BA}}_d$; other
examples of HAW sets are discussed later in the paper.

The structure of the paper is as follows: in \S\ref{games} we first
describe Schmidt's game in its original form, 
then its modification introduced by McMullen, and then introduce a new variant.
  In \S\ref{stab} we prove Theorem \ref{stability}, which establishes
  the invariance of the new winning property under nonsingular smooth
  maps, and also show that $\BA_d$ is hyperplane absolute winning,
  thereby proving 
the statement made in the title of the paper. A remarkable feature of
hyperplane absolute winning sets, is that they have a large
intersection with any set from a large class of fractals. To this end,
in \S\ref{fr} we explain 
how the games defined in \S\ref{games} are played on proper subsets
of $\rd$, introduce all the necessary definitions 
to distinguish a class of  
diffuse  fractals on which those games
can
be played successfully, and prove Proposition \ref{prop: intersection} which implies
that diffuse sets intersect hyperplane absolute winning sets
nontrivially. From this we 
derive Theorem \ref{incomprdiffuse}. 
A discussion of measures whose supports can be shown to have the
diffuseness property takes place in \S\ref{meas} and leads to the
proof of Theorem \ref{incompr}. 
The last section is devoted to some concluding remarks and several open questions. 

\ignore{
sets which were previously shown to be winning can 
introduce a class of subsets of $\rd$ which we call {\sl
  hyperplane-diffuse\/}, and show that those always have  

Recently C.\ McMullen \cite{MC} introduced and investigated the
stability of various winning properties based on Schmidt's game
(\cite{S1}), called strong winning and absolute winning, under
quasiconformal maps (see section \ref{games} for the definitions.) 
However, his notion of ``absolute winning" is too restrictive to apply to some well-known winning sets. For example, 
In this paper we define a generalized variant of his absolute winning sets, ``$\kappa$-absolute winning", that does apply to ${\bf{BA}}_d$, and which possesses more stability features than strong winning sets.
Namely we show
\begin{cor} If $f_n$ is any sequence of $C^1$-diffeomorphisms of
  $\rd$, and ${\bf{BA}}_d$ is the set of badly approximable vectors,
  then $\cap f_n({{\bf{BA}}_d})$ is winning, thus of full
  dimension. \end{cor} 

\noindent In fact, an advantage of our technique is that a much more
general result immediately follows: 
\begin{corcor}$\bf{1.1'.}$ Let $f_n$ be any sequence of $C^1$-diffeomorphisms of $\rd$, and $K$ any sufficiently regular fractal. Then $\cap f_n({{\bf{BA}}_d})$ is winning on $K$ and thus of positive and often full dimension.
\end{corcor}}

\section{Games}
\label{games}
Consider the game, originally introduced by 
Schmidt \cite{S1}
where two players, whom we will call Bob and Alice, successively
choose nested closed balls in $\rd$ 
\eq{balls}{ B_1 \supset A_1 \supset B_2 \supset\dots} 
satisfying
\eq{radii}{
\rho(A_i)=\alpha\rho(B_i), \ \ \rho(B_{i+1})=\beta\rho(A_{i}) 
}
for all $i$, where $\rho(B)$ denotes the radius of $B$, and
$0<\alpha,\beta<1$ are fixed. 
A set $S \subset \rd$ is said to be {\sl $(\alpha,\beta)$-winning\/}
if Alice has a strategy guaranteeing that  
the unique point of intersection 
$\bigcap_{i=1}^\infty B_i= \bigcap_{i=1}^\infty A_i$ of all the balls
belongs to $S$,
regardless of the way Bob chooses to play. It is said to be {\sl
  $\alpha$-winning\/} if it is $(\alpha,\beta)$-winning for all
$\beta>0$, and {\sl winning\/} if it is $\alpha$-winning for some 
$\alpha$.

In \cite{Mc} McMullen introduced the following modification:
\ignore{ if Alice still has a winning strategy with \equ{radii} replaced by
$$\rho(A_i) \geq \alpha\rho(B_i), \ \ \rho(B_{i+1}) \geq \beta\rho(A_{i}), $$
$S$ is called {\sl $(\alpha,\beta)$-strong winning\/}, with  {\sl
    $\alpha$-strong winning\/} and {\sl strong winning\/} 
defined accordingly. }
Bob and Alice choose balls $B_i$, $A_i$ so that
\[ B_1 \supset B_1 \ssm A_1 \supset B_2\supset B_2 \ssm A_2 \supset
B_3 \supset \cdots,\]
and $$ \rho(A_i) \leq \beta\rho(B_i), \ \ \rho(B_{i+1}) \geq
\beta\rho(A_{i}), $$ where $\beta<1/3$ is fixed. 
One says that $S\subset \rd$ is {\sl $\beta$-absolute winning\/} if
Alice has a strategy which leads to 
\eq{Alicewins}{\bigcap_{i=1}^\infty B_i\cap S\ne\varnothing} 
regardless of how Bob chooses to play; $S$ is said to be  {\sl
  absolute winning\/} if it is $\beta$-absolute winning for all $0 <\beta <1/3$. 
Note a significant difference between this modification and the original
rule: now Alice has rather limited control over the situation, since 
she can block fewer of Bob's possible moves at the next step. Also, in
the new version the radii of balls do not have to tend to zero,
therefore $\cap_i B_i$ 
does not have to be a single point (however 
the outcome with radii not tending to $0$ is clearly winning for Alice as long as $S$ is dense).

\ignore{We  remark\comdmitry{I am not sure whether or not we need such
    a remark, and if yes how to write it.} 
 that in the original Schmidt's definition and \equ{Alicewins} amounts
 to saying that this point belongs to $S$.  
With McMullen's modification, as well as with those introduced later in this paper, this does not have to be the case; }

The following proposition summarizes some important properties of winning 
and absolute winning subsets of $\rd$:
\begin{prop}\label{properties}
\begin{itemize}
\item[(a)] Winning sets are dense and have  Hausdorff dimension $d$. 
\item [(b)] Absolute winning implies 
$\alpha$-winning for all $\alpha<1/2$. 
\item[(c)]  The countable intersection of $\alpha$-winning (resp., 
absolute winning) sets is again $\alpha$-winning (resp.,  
absolute winning). 
\item  [(d)] The image of an  $\alpha$-winning  set under a bi-Lipschitz map $f$ is  $\alpha'$-winning, where $\alpha'$ depends only on $\alpha$ and $\Lip(f)$. 
\item  [(e)] 
The  image of an absolute winning set under a quasisymmetric map is absolute winning. 
\end{itemize}\end{prop}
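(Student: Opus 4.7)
The plan is to treat each of (a)--(e) in turn, relying on Schmidt's original arguments from \cite{S1} for (a), (c), (d) and on the ideas of \cite{Mc} for (b), (e).

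For (a), density follows immediately because Bob's initial move $B_1$ can be any ball of $\rd$, while Alice's strategy produces a point of $S$ inside it, so $S$ meets every open set. For the \hd\ statement I would implement Schmidt's classical construction: fix $\alpha$ with $S$ being $\alpha$-winning and let $\beta\to 0$; for each such $\beta$, run many parallel games in which Bob branches into roughly $\beta^{-d}$ pairwise disjoint balls of radius $\alpha\beta\rho(B_i)$ inside Alice's $A_i$ and apply Alice's strategy in every branch. The resulting Cantor-like subset of $S$ admits a natural Frostman measure whose mass-distribution bound gives \hd\ at least $\log(\beta^{-d})/\log((\alpha\beta)^{-1})$, which tends to $d$ as $\beta\to 0$.

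For (b), given $\alpha<1/2$ and $\beta>0$, pick $\beta_0\in(0,1/3)$ so that $\beta_0\le\sqrt{\alpha\beta}$ and $\alpha+2\beta_0<1$. Run Schmidt's $(\alpha,\beta)$-game while Alice secretly maintains her $\beta_0$-absolute winning strategy $\sigma$. Given Bob's ball $B_i$, $\sigma$ proposes an obstacle ball $A^{\mathrm{abs}}_i$ of radius at most $\beta_0\rho(B_i)$; Alice's Schmidt move is any $A_i\subset B_i$ of radius $\alpha\rho(B_i)$ disjoint from $A^{\mathrm{abs}}_i$, which exists thanks to $\alpha+2\beta_0<1$. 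Bob's reply $B_{i+1}\subset A_i$ has $\rho(B_{i+1})=\alpha\beta\rho(B_i)\ge\beta_0^2\rho(B_i)\ge\beta_0\rho(A^{\mathrm{abs}}_i)$ and is disjoint from $A^{\mathrm{abs}}_i$, so it is a legal move in the simulated absolute game; iterating, $\bigcap_i B_i$ lies in $S$.

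For (c) I would use Schmidt's diagonal interleaving: fix a bijection $i\mapsto(n_i,k_i)$ of $\bn$ with $\bn^2$ so that every $n$ appears infinitely often, and on round $i$ let Alice play the move that her $\alpha$-winning strategy for $S_{n_i}$ prescribes on its $k_i$-th turn, regarding the intervening rounds as played against an opponent who momentarily matches the current position. The eventual limit point lies in every $S_n$, and the same interleaving transfers to the absolute game. Parts (d) and (e) proceed by game-transport: a bi-Lipschitz $f$ of constant $C$ sends a ball of radius $r$ into a set sandwiched between two concentric balls of radii $r/C$ and $Cr$, so Bob's move in the image-side game pulls back (after shrinking by $C$) to a ball in the domain, where Alice's $\alpha$-strategy gives a response that is then pushed forward; the shrinking costs a factor depending only on $\alpha$ and $C$, yielding $\alpha'=\alpha'(\alpha,C)$. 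For (e) the same scheme works because quasisymmetric maps distort balls into sets comparable to balls with controlled ratio, and the absolute game rules --- which allow any radius within a $\beta$-factor --- absorb this distortion. The main obstacle is (b), where the two constraints $\beta_0\le\sqrt{\alpha\beta}$ and $\alpha+2\beta_0<1$ must be simultaneously realizable; the other parts are essentially bookkeeping arguments from \cite{S1,Mc}.
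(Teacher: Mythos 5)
The paper offers no proof of this proposition at all --- it defers entirely to \cite{S1} and \cite{Mc} --- and your outline follows exactly those standard arguments, so there is no divergence of method to discuss. There is, however, one concrete error, and it sits precisely at the point where the hypothesis $\alpha<1/2$ has to do its work. In part (b) you claim that Alice can place a ball $A_i\subset B_i$ of radius $\alpha\rho(B_i)$ disjoint from the obstacle ball $A^{\mathrm{abs}}_i$ of radius at most $\beta_0\rho(B_i)$ ``thanks to $\alpha+2\beta_0<1$.'' The correct sufficient condition is $2\alpha+\beta_0<1$. Indeed, in the worst case $A^{\mathrm{abs}}_i$ is centered at the center $\x_i$ of $B_i$; any ball of radius $\alpha\rho$ contained in $B_i$ has its center within distance $(1-\alpha)\rho$ of $\x_i$, while disjointness from $B(\x_i,\beta_0\rho)$ forces that center to be at distance greater than $(\alpha+\beta_0)\rho$ from $\x_i$. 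So Alice needs $1-\alpha>\alpha+\beta_0$. With your inequality one may take $\alpha=0.49$ and $\beta_0=0.2$, and then no admissible $A_i$ exists. The tell is that $\alpha+2\beta_0<1$ is solvable for every $\alpha<1$, so your argument as written never uses $\alpha<1/2$; the correct inequality $\beta_0<1-2\alpha$ is solvable exactly because $\alpha<1/2$. The fix is a one-line substitution (keep $\beta_0\le\sqrt{\alpha\beta}$ for the radius constraint on Bob's reply, which you verified correctly).

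The remaining parts are right in outline, with the usual bookkeeping left implicit: in (c), for the Schmidt game one should assign to each $S_n$ a set of turns forming an arithmetic progression (e.g.\ the dyadic partition of $\N$), so that the sub-game seen by the $n$-th strategy is a genuine $(\alpha,\beta_n)$-game for a single $\beta_n$ --- ``every $n$ appears infinitely often'' with unbounded gaps is not quite enough as stated, since $\alpha$-winning is quantified over a fixed second parameter per game. Parts (a), (d), (e) are the standard branching and game-transport arguments and are fine at this level of detail.
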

For the proofs, see \cite{S1} and \cite{Mc}. We recall that $f$ is called {\sl $M$-quasisymmetric\/} if \eq{qs}{\begin{aligned}\forall\,r > 0\ \exists\,s > 0\text{  such that   for all balls }
B(\x, r) \subset \rd\\\text{ one has }
B\big(f(\x), s) \subset f\big(B(\x, r)\big) \subset B\big(f(\x), Ms\big)\,,
\end{aligned}}
and {\sl quasisymmetric\/} if it is $M$-quasisymmetric for some $M$.
Note that being  quasisymmetric  is equivalent to being quasiconformal when $d \geq 2$, see
\cite[Theorem 7.7]{G};
also, bi-Lipschitz clearly implies  quasisymmetric, but not vice versa. 
The above proposition 
can be used to establish the following incompressibility properties of winning sets:

\begin{prop}\label{winningincompr}
\begin{itemize}
\item[(a)] Winning sets are incompressible.
\item [(b)] Absolute winning sets are strongly incompressible.
\end{itemize}
\end{prop}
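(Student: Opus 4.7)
The plan is to read both statements directly off Proposition \ref{properties}. For part (a), suppose $S$ is $\alpha$-winning and $\{f_i\colon U\to\rd\}$ is uniformly bi-Lipschitz with $L=\sup_i\Lip(f_i)<\infty$. Each inverse $f_i^{-1}$ is then bi-Lipschitz with Lipschitz constant at most $L$, so by item (d) of Proposition \ref{properties} each preimage $f_i^{-1}(S)$ is $\alpha'$-winning for a value $\alpha'=\alpha'(\alpha,L)$ that is independent of $i$. The uniformity of the $f_i$ is exactly what provides this common parameter; once it is in hand, item (c) yields that $\bigcap_i f_i^{-1}(S)$ is $\alpha'$-winning, hence of Hausdorff dimension $d$ by (a).

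For part (b), the uniformity hypothesis is dropped, and this is where the absolute winning property earns its keep: unlike $\alpha$-winning, it carries no quantitative parameter that must be preserved across the family. Given an arbitrary bi-Lipschitz sequence $\{f_i\}$, each $f_i^{-1}$ is bi-Lipschitz and therefore quasisymmetric, so by item (e) every $f_i^{-1}(S)$ is absolute winning, with no control on $\Lip(f_i)$ required. Item (c) then forces the intersection to be absolute winning, (b) promotes it to $\alpha$-winning for all $\alpha<1/2$, and (a) concludes with Hausdorff dimension $d$.

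The one mild technical nuisance is that the $f_i$ are defined only on the open set $U$, not on all of $\rd$, so the games as formulated in \S\ref{games} cannot literally be played on $\rd$-balls inside $f_i(U)$. I would dispose of this by a routine localization: Alice's winning strategies can be launched from any starting ball, so by restricting attention to a closed ball $B\subset U$ on which each $f_i$ is a bi-Lipschitz homeomorphism onto a bounded open subset of $\rd$, the winning-property transfers under (d) and (e) and the intersection argument from (c) apply verbatim, yielding $\dim\bigl(\bigcap_i f_i^{-1}(S)\cap B\bigr)=d$ and therefore $\dim\bigl(\bigcap_i f_i^{-1}(S)\bigr)=d$. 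This bookkeeping is the only place where any care is needed; the rest is a direct concatenation of the properties already listed.
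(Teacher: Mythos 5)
Your derivation of both parts from Proposition \ref{properties} is exactly the route the paper takes: it states that the proposition ``can be used to establish'' these incompressibility properties and then simply cites Dani for (a) and McMullen for (b), which are precisely the sources of items (d) and (e), so your argument is correct and matches the intended proof. The only place you are slightly more optimistic than the literal statements allow is in applying item (e) to $f_i^{-1}$ when $f_i$ is defined only on an open set: the quasisymmetry condition \equ{qs} is phrased for globally defined maps, and a bi-Lipschitz map of a ball need not extend to a global quasisymmetric map of $\rd$, so the localization you sketch really amounts to rerunning McMullen's local strategy-transfer argument (in the spirit of the paper's Theorem \ref{stability}) rather than invoking (e) verbatim --- but this is the same gloss the paper itself makes.
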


Part (a) was proved by Dani in \cite{Dani conference}, and (b) follows
from one of the 
main results of \cite{Mc}. 
One can see now that  one of the advantages of proving a set to be
absolute winning is a possibility to intersect it with its countably
many bi-Lipschitz  images without worrying about  
a uniform bound on bi-Lipschitz  constants.

Note that winning sets arise naturally in many  
settings in dynamics and Diophantine approximation  \cite{Dani-rk1, D,
  Dani conference, ET, Fae, FPS, KW2, KW3, Mo, S1, S2, T1, T2}. 
Several
examples of absolute winning sets 
were exhibited by McMullen in \cite{Mc}, most notably the set of badly
approximable numbers in $\R$. 
However absolute winning does not occur as frequently;  see \cite{Mc}
for examples of sets in $\R$ 
which are winning but not absolute winning.  
\ignore{
In
particular, he showed that the set ${\bf BA}\subset \R$ of 
\ba\ numbers is absolute winning. More generally, he proved

\begin{prop}\label{bddgeod}
 The set $D(\Gamma)$ of 
endpoints of lifts of all bounded geodesic rays in $\Hyp^{d+1}
/\Gamma$, where $\Gamma  \subset \Isom(\Hyp^{d+1} )$ is a lattice with
a cusp at infinity, is absolutely winning. 
\end{prop}

This strengthened Dani's  result \cite{Dani-rk1} on the 
winning property of the set $D(\Gamma)$.}
Another example is
the set of \ba\ vectors in $\rd$ for $d > 1$. Indeed, the hyperplane
$\{0\} \times \mathbb{R}^{d-1}$ is 
disjoint from  ${\bf BA}_d$, thus Alice must avoid it in order to win
the game.  
However it is clear that Bob can play the absolute game in such a way
that his balls are always centered on this hyperplane. 

The above example 
suggests that in order to build a version of an absolute game adapted for treatment
of the set of badly approximable vectors, one needs to allow Alice to
`black out' neighborhoods of hyperplanes. 
This is precisely how we proceed to define our new version of the
game. Namely, fix $k \in\{0,1,\dots,d-1\}$ 
and $ 0 < \beta < 1/3$, and define the {\sl $k$-dimensional $\beta$-absolute game\/}
in the following way. Bob initially chooses $\x_1\in \rd$ and $\rho_1>
0$, thus defining a closed ball $B_1 = B(\x_1,\rho_1)$. 
Then in each stage of the game, after Bob chooses $\x_i \in \rd$ and
$\rho_i > 0$, Alice chooses an affine subspace $\cl$ of dimension $k$
and removes its $\varepsilon\rho_i$-neighborhood $A_i\df
\cl^{(\varepsilon\rho_i)}$ from  
$B_i \df B(\x_i,\rho_i)$ for some $0 < \varepsilon \leq \beta$ (note
that $\varepsilon$ is allowed to depend on $i$). Then Bob chooses
$\x_{i+1}$ 
and $\rho_{i+1} \ge \beta\rho_i$ such that
$$B_{i+1}\df B(\x_{i+1},\rho_{i+1}) \subset B_i \ssm A_i\,.$$
The set $S$ is said to be {\sl $k$-dimensionally $\beta$-absolute  winning\/} if 
Alice has a strategy guaranteeing that $\bigcap_i B_i$ intersects $S$. 
Note that our convention on \emph{intersecting} $S$ is in agreement
with McMullen's approach and simplifies our proofs, but is slightly
different from Schmidt's definition of general games in \cite{S1},
which requires 
the intersection of balls to be a subset of $S$. 
Also note that 
\begin{equation}\label{comparison}
\begin{aligned}
\text{$k$-dimensional $\beta$-absolute winning implies
  $k$-dimensional  $\beta'$-absolute  winning}\\ \text{ whenever
  $\beta' \ge \beta$, as long as $ \beta' <
  1/3$}\,.\qquad\qquad\qquad\qquad\end{aligned} 
\end{equation}
We will say that $S$ is  {\sl $k$-dimensionally  absolute  winning\/} if it is $k$-dimensionally  $\beta$-absolute  winning for every 
$0 < \beta < 1/3$ (equivalently, for arbitrary small positive
$\beta$). Observe that the strongest case, $k=0$, is precisely
McMullen's absolute winning property.  
We will be mostly interested in the weakest case, $k = d-1$, in other words, the case when Alice
removes neighborhoods of affine hyperplanes. For the sake of brevity, $(d-1)$-dimensionally absolute  winning sets will be called {\sl hyperplane absolute winning\/}, or {\sl HAW} sets.

The following proposition summarizes some  properties of sets winning 
in this new version of the game:

\begin{prop}\label{kdimproperties}
\begin{itemize}
\item[(a)] HAW 
implies 
$\alpha$-winning for all $\alpha < 1/2$; hence HAW sets have  Hausdorff dimension $d$. 
\item [(b)] The countable intersection of $k$-dimensionally  absolute  winning sets
is $k$-dimensionally  absolute  winning.
\item[(c)]   The image of a $k$-dimensionally  absolute  winning  set under a $C^1$ diffeomorphism of $\rd$ is  $k$-dimensionally  absolute  winning. 
\end{itemize}\end{prop}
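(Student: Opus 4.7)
For (a), fix $\alpha < 1/2$ and $\beta > 0$, and exhibit a winning strategy for Alice in the Schmidt $(\alpha,\beta)$-game on an HAW set $S$ as follows. Choose an auxiliary $\beta^\ast < \min(1-2\alpha,\,\alpha\beta,\,1/3)$, and let $\sigma$ be Alice's winning strategy in the $\beta^\ast$-absolute HAW game on $S$. Given Bob's Schmidt ball $B_i = B(\x_i,\rho_i)$, Alice regards it as the current HAW ball and consults $\sigma$ for a hyperplane $\cl_i$ and some $\varepsilon_i \le \beta^\ast$. A direct geometric check---worked out on the worst case in which $\cl_i$ passes through $\x_i$---shows that since $\varepsilon_i \le \beta^\ast < 1-2\alpha$, the set $B_i \ssm \cl_i^{(\varepsilon_i\rho_i)}$ contains a ball of radius $\alpha\rho_i$; Alice plays this ball as her Schmidt move $A_i$. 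Bob's Schmidt response $B_{i+1} \subset A_i$, of radius $\alpha\beta\rho_i \ge \beta^\ast\rho_i$ and disjoint from $\cl_i^{(\varepsilon_i\rho_i)}$, is then a legal HAW continuation. Hence $\sigma$ wins the simulated HAW game, so the (single) Schmidt outcome point lies in $S$; the Hausdorff dimension statement follows from Proposition~\ref{properties}(a).

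For (b), fix $\beta < 1/3$; the plan is to show that $\bigcap_j S_j$ is $\beta$-absolute winning by combining a standard diagonal schedule with a shrinkage subroutine. Define $f(i) = v_2(i)+1$, where $v_2$ denotes the $2$-adic valuation, so that each $j$ appears in $f^{-1}(j)$ with gap exactly $2^j$. Alice alternates two types of moves: at odd real steps she plays a \emph{shrinkage} block---a hyperplane through the center of $B_i$ with $\varepsilon = \beta$---which forces $\rho_{i+1} \le (1-\beta)\rho_i/2$ and ultimately $\rho_i \to 0$; at the even step $2k$ she consults a $\beta_j$-absolute winning strategy $\sigma_j$ for $S_j$, where $j = f(k)$ and $\beta_j := \beta^{2^{j+1}} < 1/3$, and plays the block it prescribes. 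On the real steps $t_j^{(1)} < t_j^{(2)} < \cdots$ at which $\sigma_j$ is invoked, the balls $\{B_{t_j^{(l)}}\}$ form a legal $\beta_j$-absolute play against $S_j$: containments follow from the nesting of Bob's balls, and consecutive virtual radii have ratio at least $\beta^{2^{j+1}} = \beta_j$ because consecutive $\sigma_j$-steps are separated by exactly $2^{j+1}$ real steps. Hence $\bigcap_l B_{t_j^{(l)}} \cap S_j \ne \varnothing$; since a nested intersection is unchanged by passage to a subsequence, $\bigcap_i B_i \cap S_j \ne \varnothing$ for every $j$, and the shrinkage forces $\bigcap_i B_i$ to be a single point, which therefore lies in every $S_j$.

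Part (c) is precisely the content of Theorem~\ref{stability}, proved in Section~\ref{stab}, and it is the technically substantial ingredient. The idea is that at sufficiently small scales a $C^1$ diffeomorphism $f$ is nearly affine, so Alice can transplant her HAW strategy on $S$ to a strategy on $f(S)$ by pulling Bob's ball back under $f^{-1}$, running $\sigma$ on the resulting approximate ball, and pushing the prescribed hyperplane block forward to an approximate hyperplane block. The main obstacle is quantifying the nonlinear distortion---how close images of balls are to balls and images of hyperplanes are to hyperplanes---and absorbing it into a slight enlargement of the parameter $\beta$; this is feasible only after radii have dropped below a scale dictated by the modulus of continuity of $Df$, which Alice can arrange through an initial sequence of trivial moves.
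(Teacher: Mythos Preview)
Your proof is correct. The paper itself declares (a) and (b) ``straightforward'' and defers (c) to Theorem~\ref{stability}, so your write-up is more detailed than the paper's but follows the same lines for (a) and (c).

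For (b) your route is slightly different from what the paper has in mind. The paper (see the later Proposition~\ref{cap}) simply says ``the proof is exactly the same as Schmidt's original proof \cite[Theorem~2]{S1}'': one interleaves the strategies $\sigma_j$ on a fixed schedule, with $\beta_j$ chosen as a suitable power of $\beta$, and in the degenerate case where $\bigcap_i B_i$ has nonempty interior one appeals to density of $\bigcap_j S_j$ (which follows from (a) and Proposition~\ref{properties}(c)). Your version instead inserts explicit shrinkage moves at odd steps to force $\rho_i \to 0$, so that $\bigcap_i B_i$ is a single point and the density argument is unnecessary. This is a legitimate alternative and makes the proof of (b) self-contained, at the minor cost of the extra bookkeeping with the alternating schedule; the paper's implied argument is shorter but tacitly relies on (a) and the classical Schmidt intersection theorem to handle the non-shrinking case.
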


Parts (a) and (b) are straightforward, and part (c) will be proved in the next section in the following stronger form:

 \begin{thm}
\label{stability} 
 Let $S \subset \rd$ be  $k$-dimensionally  absolute  winning, 
 $U \subset \rd$ open, and
 $f: U \to \rd$ 
a $C^1$ nonsingular map. 
Then $f^{-1}(S) \cup U^c$ is
$k$-dimensionally absolute  winning. Consequently, any $k$-dimensionally absolute  winning set 
is strongly $C^1$ incompressible.
\end{thm}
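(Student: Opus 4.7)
The plan is to prove the first assertion by a simulation argument; the ``consequently'' clause then follows via Proposition \ref{kdimproperties}. Fix $\beta < 1/3$; I will exhibit Alice's $k$-dim $\beta$-absolute winning strategy for $f^{-1}(S) \cup U^c$. The idea is for Alice to shadow, in parallel, an auxiliary $\beta'$-absolute game for $S$ on the $f$-image side, using the given winning strategy of $S$ there, where $\beta' < 1/3$ is chosen small in terms of $\beta$ and the uniform $C^1$-bounds of $f$. If Bob's ball $B_i = B(x_i,\rho_i)$ ever fails to lie in $U$, then $\bigcap_i B_i$ meets $U^c$ and Alice wins trivially; otherwise, by playing arbitrary legal moves first (which only restrict Bob's future options), Alice can assume that from some stage onwards $B_i \subset K$ for a fixed compact $K \subset U$ on which $\|Df\|$ and $\|Df^{-1}\|$ are uniformly bounded and $Df$ is uniformly continuous, and that $\rho_i$ is as small as the $C^1$ estimates require.

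In the simulation regime at round $i$, set $L_i \df Df(x_i)$ and present to the image game the Bob-ball $B'_i \df B(f(x_i), r_i)$ with $r_i \df (1+\eta)\|L_i\|\rho_i$, where $\eta > 0$ is a small safety parameter; a first-order Taylor estimate gives $f(B_i) \subset B'_i$. The simulated Alice' responds with an affine $k$-plane $\cl'_i$ and $\varepsilon'_i \leq \beta'$. Alice pulls back $\cl'_i$ through the linearization $y \mapsto f(x_i) + L_i(y - x_i)$, setting $\cl_i \df x_i + L_i^{-1}(\cl'_i - f(x_i))$, and removes $\cl_i^{(\varepsilon_i \rho_i)}$ with $\varepsilon_i$ slightly larger than $\|L_i^{-1}\|\|L_i\|\varepsilon'_i$; choosing $\beta'$ small in terms of the operator-norm bounds ensures $\varepsilon_i \leq \beta$, so the move is legal. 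The $C^1$ estimate then implies that any $y \in B_i$ with $f(y) \in (\cl'_i)^{(\varepsilon'_i r_i)}$ lies in $\cl_i^{(\varepsilon_i \rho_i)}$, so Bob's next ball $B_{i+1} \subset B_i \ssm \cl_i^{(\varepsilon_i \rho_i)}$ translates to a legal simulated Bob-move $B'_{i+1} \df B(f(x_{i+1}), r_{i+1}) \subset B'_i \ssm (\cl'_i)^{(\varepsilon'_i r_i)}$ with $r_{i+1} \geq \beta' r_i$ (the latter because $\rho_{i+1} \geq \beta \rho_i$ and the ratio $\|L_{i+1}\|/\|L_i\|$ is close to $1$ by uniform continuity of $Df$). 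Since Alice' wins the simulated game, $\bigcap_i B'_i \cap S \ne \varnothing$; since $B'_i \supset f(B_i)$, this forces $\bigcap_i B_i$ to meet $f^{-1}(S)$.

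The main obstacle is the simultaneous control of three multiplicative distortion sources---the Taylor error of $f$ versus $L_i$, the drift of $\|L_i^{\pm 1}\|$ along the play, and the fact that $f(B_i)$ is only approximately a round ball---all of which must be absorbed into the single gap $\beta - \beta'$ via the safety parameter $\eta$. Uniform continuity of $Df$ on $K$ and the freedom to make $\rho_i$ arbitrarily small make the estimates tractable, but one must set them up so that $\eta$ depends only on the operator-norm bounds on $K$, not on $i$. For the second sentence, given any sequence $\{f_i\}$ of $C^1$ diffeomorphisms on a fixed open $U$, each $f_i^{-1}(S) \cup U^c$ is $k$-dim absolute winning by the first part, so by Proposition \ref{kdimproperties}(b) the countable intersection $\bigcap_i\bigl(f_i^{-1}(S) \cup U^c\bigr) = \bigl(\bigcap_i f_i^{-1}(S)\bigr) \cup U^c$ is $k$-dim absolute winning as well. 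Starting Alice with any ball $B \subset U$ keeps the whole play inside $U$, forcing her winning point into $\bigcap_i f_i^{-1}(S) \cap B$; Proposition \ref{kdimproperties}(a) then yields Hausdorff dimension $d$ there, establishing strong $C^1$ incompressibility.
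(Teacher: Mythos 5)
Your overall architecture --- simulate an auxiliary $\beta'$-absolute game for $S$ on the image side, pull Alice's hyperplanes back through the linearization of $f$, and dispose separately of the cases where Bob's balls leave $U$ or fail to shrink --- is the same as the paper's. But there is a genuine gap in the step where you translate Bob's move into the simulated game, namely the claim that $B'_{i+1}=B\bigl(f(\x_{i+1}),r_{i+1}\bigr)\subset B'_i\ssm(\cl'_i)^{(\varepsilon'_i r_i)}$. From the Game-1 constraint $B_{i+1}\cap\cl_i^{(\varepsilon_i\rho_i)}=\varnothing$ you get $\dist(\x_{i+1},\cl_i)>\varepsilon_i\rho_i+\rho_{i+1}$, and after pushing forward through $f$ the second summand is only worth about $\|L_i^{-1}\|^{-1}\rho_{i+1}$ on the image side; what you need, however, is $\dist\bigl(f(\x_{i+1}),\cl'_i\bigr)>\varepsilon'_i r_i+r_{i+1}$ with $r_{i+1}\approx(1+\eta)\|L_{i+1}\|\rho_{i+1}$. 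The shortfall is of order $\bigl(\|L\|\,\|L^{-1}\|-1\bigr)\rho_{i+1}$: it comes from enclosing the ellipsoid $f(B_{i+1})$ in a round ball, and it is proportional to $\rho_{i+1}$, which Bob is free to keep comparable to $\rho_i$. Consequently it does not shrink when you decrease $\beta'$ or $\eta$, and it cannot be absorbed into Alice's Game-1 move, whose width is capped at $\beta\rho_i$. For any non-conformal $f$ the simulated Bob-move can therefore be illegal, and the induction breaks.

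The paper's proof closes exactly this hole with two devices your argument lacks. First, Alice does not transmit every one of Bob's balls to Game 2: she plays dummy moves and transmits only a subsequence $B_{j_i}$ along which the radius has dropped by a factor in $[\beta^n,\beta^{n-1})$, as in \equ{radiiratio}, so that the radius of the next transmitted image ball is less than $(\beta'/\beta)\rho'_i$ and in particular is small compared with the current image ball. Second, in Game 1 Alice removes the pullback of a fattened neighborhood $\cl_i^{\prime(\eta\rho'_i)}$ with $\eta=(1+1/\beta)\beta'$, rather than of $\cl_i^{\prime(\beta'\rho'_i)}$; the extra width $\beta'\rho'_i/\beta$ is precisely what dominates $\rho'_{i+1}$ and guarantees that $B'_{i+1}$ clears $A'_i$, while the condition \equ{choose n} on $n$ keeps the pulled-back width below $\beta\rho_{j_i}$ so that Alice's move remains legal. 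Your deduction of the ``consequently'' clause from Proposition \ref{kdimproperties} is essentially correct (modulo the slip that Bob, not Alice, chooses the initial ball inside $U$), but the first assertion needs the subsequence-and-fattening mechanism to go through.
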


To deduce the statement made in the title of the paper, it remains to establish the HAW  property for the set of badly approximable vectors:

\begin{thm} 
\label{BA}
${{\bf{BA}}}_d\subset \rd$ is  hyperplane absolute winning.
\end{thm}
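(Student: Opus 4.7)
The plan is to exhibit, for every $\beta < 1/3$, an explicit strategy for Alice in the hyperplane $\beta$-absolute game that forces any accumulation point $\x \in \bigcap_i B_i$ to satisfy the Diophantine inequality $\|\x - \vp/q\| \geq c\,q^{-1-1/d}$ for all $\vp \in \zd$ and $q \in \N$, with some constant $c = c(\beta, d, B_1) > 0$ depending on Bob's opening ball. Such an $\x$ lies in $\mathbf{BA}_d$; in the event that $\bigcap_i B_i$ has nonempty interior, the density of $\mathbf{BA}_d$ closes the argument (and a small modification, inserting hyperplanes through the centers $\x_i$, actually forces $\rho_i \to 0$).

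The central input is an elementary geometric lemma of Diophantine flavor: there is a constant $c_d > 0$ such that, for any ball $B \subset \rd$ of radius $r$, every rational point $\vp/q \in B$ with $q \leq c_d\,r^{-d/(d+1)}$ lies on a common affine hyperplane. The proof is a volume estimate: were $d+1$ such rationals $\vp_0/q_0, \ldots, \vp_d/q_d$ affinely independent, the simplex they span would have volume at least $(d!\prod_j q_j)^{-1}$, since the determinant of the matrix with rows $\vp_j/q_j - \vp_0/q_0$ is a nonzero integer divided by $\prod_j q_j$; comparison with the obvious upper bound of order $r^d$ forces $\prod_j q_j \gtrsim r^{-d}$, contradicting the hypothesis on the $q_j$.

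\emph{Alice's strategy.} Fix $c = c(\beta,d) > 0$ sufficiently small, of order $\beta^2 c_d^{(d+1)/d}$. At stage $i$, set $Q_i := c_d\,\rho_i^{-d/(d+1)}$ and let $\cl_i$ be an affine hyperplane containing every rational $\vp/q \in B_i$ with $q \leq Q_i$; such a hyperplane exists by the lemma, and if fewer than $d$ rationals arise, their affine hull is extended arbitrarily. Alice removes $A_i := \cl_i^{(\beta\rho_i)}$. For any rational with denominator in the ``window'' $[\alpha_i, Q_i]$, where $\alpha_i := (c/(\beta\rho_i))^{d/(d+1)}$, the danger ball $B(\vp/q, c\,q^{-1-1/d})$ has radius at most $\beta\rho_i$ and center on $\cl_i$, hence sits inside $A_i$; consequently Bob's next ball $B_{i+1} \subset B_i \ssm A_i$ cannot meet it. The rule $\rho_{i+1} \geq \beta\rho_i$, together with the choice of $c$, yields $\alpha_{i+1} \leq Q_i$, so consecutive windows overlap and their union covers all of $[\alpha_1, \infty)$.

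It remains to dispatch the finitely many rationals $\vp/q \in B_1$ with $q < \alpha_1$. In a preliminary block of moves Alice plays, for each such $\vp/q$, a hyperplane through the point (with $\varepsilon$ arbitrarily small); thereafter every $B_i$ excludes the point. The distances from the eventual $\x$ to this finite collection are therefore positive, and taking the minimum of $q^{1+1/d}\|\x - \vp/q\|$ over it yields a positive quantity which, combined with the constant $c$ from the main phase, gives the required Diophantine constant for $\x$. The principal technical burden is the volume lemma together with calibration of the constants $c, c_d, \beta$; conceptually, the reason the HAW game is so well-suited to $\mathbf{BA}_d$ is that, at each scale $\rho_i$, the rationals Alice must shield $\x$ from are, by the lemma, automatically aligned along a single affine hyperplane---precisely the shape she is permitted to delete.
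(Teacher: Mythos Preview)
Your approach is essentially the paper's: the simplex lemma shows that the rationals relevant at each scale lie on a single affine hyperplane, and Alice deletes its $\beta\rho$-neighborhood. The paper organizes the bookkeeping slightly more cleanly---it first makes dummy moves until the radius $\rho$ is small enough and then sets $c=\beta^2\rho$, so that the very first window already covers $q\ge 1$ and no separate ``preliminary phase'' is needed; your preliminary block has a small circularity (the threshold $\alpha_1$ is defined via $\rho_1$, but playing through the block shrinks Bob's ball before the main phase starts, which raises the effective $\alpha$ and can leave a range of denominators unhandled), easily repaired by letting $c$ scale with the opening radius as the paper does.
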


We point out that ${{\bf{BA}}_d}$ is not $k$-dimensionally absolute
winning for $k < d-1$, since, 
as in the case $k = 0$, 
Bob can play in such a way that his balls are always
centered on 
$\{0\} \times
\mathbb{R}^{d-1}$.

Here is one more class of winning sets which we show to be HAW. 
Let $\T^d \df \rd/\Z^d$ be the $d$-dimensional torus, and let $\pi:\rd\to\T^d$ denote the natural projection.
Given a nonsingular semisimple 
matrix $R\in \GL_d(\Q) \cap M_d(\Z)$ and a point $y \in \T^d$, define the set $$
\tilde E(R,y) \df \left\{\x\in \rd: y\notin\overline{ \{\pi(R^k\x )
: k\in\N\}}\right\}\,;$$
in other words, a lift to $\rd$ of the set of points of $\T^d$ whose
orbits under the endomorphism $f_R$ of $\T^d$ induced  
by $R$, 
$$f_R(x) \df \pi(R\x)\text{ where }\x\in \pi^{-1}(\x) \,,$$
 do not approach $y$. This is a 
 set which has Lebesgue measure zero when $f_R$ is ergodic and, in the special case
 $y\in \Q^d/\Z^d$, was proved to be winning by Dani \cite{D} (see
 \cite{BFK} for more general results). We have 

\begin{thm} 
\label{ery} For every $R$ as above and any $y\in\T^d$, 
the set
$\tilde E(R,y)$ is hyperplane absolute winning. 
\end{thm}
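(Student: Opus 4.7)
The plan is to exhibit a strategy for Alice in the hyperplane absolute game which forces the limit point $\x_\infty\in\bigcap_i B_i$ to lie in $\tilde E(R,y)$. I split into two cases according to whether $R$ has an eigenvalue of modulus strictly greater than $1$. If not, since $R\in M_d(\Z)$ has $\det R\ne 0$, every eigenvalue of $R$ has modulus equal to $1$; the minimal polynomial of $R$ is then a monic integer polynomial all of whose roots lie on the unit circle, so Kronecker's theorem forces every eigenvalue to be a root of unity. By semisimplicity $R^N=I$ for some $N$, so the $f_R$-orbit of every point is finite, $\rd\ssm\tilde E(R,y)$ is a countable set of points $\{p_n\}_{n\ge 1}$, and Alice wins by enumerating them and at stage $n$ removing a small hyperplane neighborhood through $p_n$ (making an arbitrary move when $p_n\notin B_n$).

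In the remaining case, fix an eigenvalue $\lambda_1$ of $R$ with $\mu\df|\lambda_1|>1$. By semisimplicity, the ellipsoid $R^{-k}B(0,\delta)$ has its shortest axis along the $\lambda_1$-eigendirection, of length at most $C\delta\mu^{-k}$; let $H_0\subset\rd$ be a hyperplane direction normal to this axis. For small $\delta>0$ (to be determined in terms of $\beta$ and the spectrum of $R$) and each $k\ge 1$, set
\[E_k\df\{\x\in\rd:\dist(\pi(R^k\x),y)<\delta\}=R^{-k}\bigl(y+B(0,\delta)\bigr)+R^{-k}\zd.\]
The connected pieces of $E_k$ are translates of $R^{-k}B(0,\delta)$ centered on the coset $R^{-k}y+R^{-k}\zd$, which has covolume $|\det R|^{-k}$. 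Using $|\det R|\le\mu^d$ and a standard lattice point count, I will show that whenever $\rho_i\mu^k$ is bounded the number of pieces of $E_k$ meeting a ball $B_i$ of radius $\rho_i$ is at most a constant $M=M(R,\delta)$.

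Alice's strategy is to address iterates $k=1,2,3,\dots$ one by one during ``windows'' of stages in which $\rho_i$ is comparable to $\mu^{-k}$: at each stage of the window she removes an $\epsilon\rho_i$-neighborhood of an affine hyperplane parallel to $H_0$ that contains one of the remaining pieces of $E_k\cap B_i$, idling on any iterate for which $E_k\cap B_i=\vrn$. The constraint $\rho_{i+1}\ge\beta\rho_i$ guarantees that each window contains at least $c\log_{1/\beta}(C/\delta)$ stages, which is enough to accommodate all $M$ pieces provided $\delta$ is chosen small enough in terms of $\beta$. The main obstacle will be this bookkeeping---balancing the constants so the schedule works for all iterates simultaneously and the transition between consecutive windows is smooth. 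Once the strategy succeeds, $\x_\infty$ lies outside $E_k$ for every $k\ge 1$, so $\dist(\pi(R^k\x_\infty),y)\ge\delta$ for every $k\in\N$, and hence $\x_\infty\in\tilde E(R,y)$.
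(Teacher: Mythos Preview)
Your treatment of the spectral-radius-$1$ case is correct and matches the paper's argument via Kronecker's theorem and countable intersections.

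In the expanding case your scheduling breaks down for small $\beta$, and this is precisely the regime you must handle: HAW requires winning for every $\beta\in(0,1/3)$, equivalently for arbitrarily small $\beta$. The difficulty is that consecutive iterates $k$ and $k+1$ correspond to radius scales differing by the fixed factor $\mu$, so the window for a single iterate can contain at most about $\log\mu/\log(1/\beta)$ of Bob's moves. When $\beta<\mu^{-1}$ this quantity is less than one: a single move by Bob can drop $\rho_i$ across several iterate-windows, leaving Alice with one hyperplane to remove but roughly $\log_\mu(1/\beta)$ iterates---each contributing at least one piece---to avoid. Shrinking $\delta$ does not help: it thins the pieces but does not change the fact that successive windows must tile the radius axis at ratio $\mu$, so your claimed lower bound of $c\log_{1/\beta}(C/\delta)$ stages per window cannot hold unless the windows overlap, and then the total workload again exceeds the available moves by the same factor.

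What is missing is the paper's separation-and-containment step. One fixes a small $t_0$ so that distinct translates $R^{-j}\bigl(B(\y+\vm,t)\bigr)$ with $\lambda^{j}\in[\beta^{-k},\beta^{-(k+1)})$ are separated by at least $\delta_2 t_0\beta^{k+1}$; since Bob's ball $B_{j_k}$ at that stage has radius below $\tfrac12\delta_2 t_0\beta^{k+1}$, any two such translates meeting $B_{j_k}$ must have centers related by $\y+\vm_1=R^{-(j_2-j_1)}(\y+\vm_2)$, and then one is contained in the other with $t$ replaced by $bt$. Thus \emph{all} relevant pieces for that stage collapse into a single set $R^{-j}\bigl(B(\y+\vm,bt)\bigr)$, whose projection onto the top eigenspace has diameter $\lesssim\beta^{k+1}\rho$, and one hyperplane removal suffices. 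This collapsing is what makes the strategy work uniformly in $\beta$, and it is absent from your outline.
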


We  prove the three theorems stated above in the next section, and
then in \S\ref{fr} study games played 
on proper subsets of $\rd$.
  
\section{Proof of Theorems \ref{stability}, \ref{BA} and \ref{ery}}

\label{stab}




\begin{proof}[Proof of Theorem \ref{stability}]
The idea is to `pull back' the strategy via the map $f$. We first
describe this informally. Alice is playing Bob with target set
$f^{-1}(S) \cup U^c$ and parameter $\beta \in (0,1/3)$; let us call this Game
1. She will define an appropriate constant $\beta' \in (0,1/3)$, and
consider a Game 2, played with target set $S$ and parameter $\beta'$,
for which it is assumed she has a winning strategy. For each move
$B_i$ made by Bob in Game 1, 
Alice 
will use $f$ to construct a set $B'_i$ which is
a legal move in Game 2. Using her strategy, she will have a move
$A'_i$ in Game 2, and she will use $f^{-1}$ to construct from this set
a legal move $A_i$ in Game 1. The fact that she is assured to win in
Game 2 will be shown to imply that this will also result in her
victory in Game 1. We now proceed to the details. 

No matter what $\beta$ is, note that if the diameters of the balls
$B_i$ do not tend to 
zero, then $\cap B_i$ has nonempty interior. Since $S$ is winning, it
is dense, so $f^{-1}(S)\cup U^c$ is dense as well, and $\cap B_i$ must
intersect it nontrivially. Thus, we may assume the diameters of $B_i$ 
do tend to zero. Furthermore, 
we may assume that there exists some $i \in \N$ such that $B_i \subset
U$,
since otherwise $B_i \cap U^c$ is a sequence of nonempty, nested closed subsets
of the closed set $U^c$, so that $\cap_i B_i \cap U^c$ is nonempty. 

\ignore{ is illustrated in these diagrams of the relevant sets and their radii:
\begin{equation} \begin{CD}
B'_{jk} @<{\text{dummy moves}}<< W'_{k} @<<{\mbox{\large{$\supset$}}}< f(A_k) \\
@V{f^{-1}}V{\bigcap}V @. @A{f}A{||}A \\
B_k @>>> \text{\footnotesize{Alice's move}}@>>> A_k
\end{CD} \end{equation}
\\ 
\begin{equation} \begin{CD}
\beta'^{nk}\rho' @<{\text{dummy moves}}<< \beta'^{nk+1}\rho'
@<<{\mbox{\large{$\supset$}}}< C\beta^{k+1}\rho' \\ 
@V{f^{-1}}V{\bigcap}V @. @A{f}A{\bigcup}A \\
\beta^k\rho  @>>> \text{\footnotesize{Alice's move}}@>>> \beta^{k+1}\rho
\end{CD} \end{equation}
 \\ \\
}

We will need to control the distortion of the map $f$. Denote by
$J_{\z} \df d|_{\z}f$ the derivative of $f$ at $\z$. First we note
that for any closed ball 
$B \subset U$ and any $\x, \y\in B$, 
we have 
$$
\|f(\x) - f(\y)\| \leq \sup_{\z \in B} \|J_{\z}\|_{op} \, \|\x - \y\|,
$$
where $\|J_{\z}\|_{op}$ denotes the operator norm of $J_{\z}$. For any
$\x_0 \in B$, denote by 
$$L_{\x_0}(\x) \df
f(\x_0)+J_{\x_0}(\x-\x_0)$$ the linear approximation to $f$ at $\x_0$,
and similarly let $\bar{L}_{\y'}$ denote the linear approximation to
$f^{-1}$ at $\y' \in f(B)$. We claim that 
\eq{supratio}{
\sup_{\y, \y_0 \in f(B)} \frac{\|\bar{L}_{\y_0}(\y)- f^{-1}(\y)\|}{\|\y-\y_0\|}
}
tends to zero 
when the diameter of $B$ tends to zero.
To see this, define $H_{\y_0}(\z) \df f^{-1}(\z) -
\bar{L}_{\y_0}(\z)$, so that $H_{\y_0}$ is differentiable with $d_{\z}H_{\y_0} =
J_{\z}^{-1} - J_{\y_0}^{-1}$. Since $f$ is $C^1$ and nonsingular,
$\sup_{\z,\y_0 \in B} \|d_{\z}H_{\y_0}\|_{op}$ tends to zero with the
diameter of $B$. But 
$$
\frac{\|\bar{L}_{\y_0}(\y)- f^{-1}(\y)\|}{\|\y-\y_0\|} =
\frac{\|H_{\y_0}(\y_0)-H_{\y_0}(\y)\|}{\|\y-\y_0\|} \leq \sup_{\z \in B}\|d_{\z}H_{\y_0}\|_{op}
\,,
$$
which proves the claim. 

Let $B_1$ be the initial ball chosen by Bob, and put 
$$C_1 \df \sup_{\x \in B_1} \|J_\x\|_{op}, \  \ C_2 \df \sup_{\x \in f(B_1)}
\|(J_\x)^{-1}\|_{op}.$$

Let $n \in \mathbb{N}$ be large enough that 
\eq{choose n}{
C(\beta + 1)\beta^{n-2} < 1, \ \ \ \mathrm{where \ } C \df 2C_1 \,
C_2,
}
and let $\beta' = \beta^n < 1/3$.
By making enough dummy moves in the beginning of the game so
that Bob's ball $B$ becomes small enough, Alice can force the
quantity in \equ{supratio} to be bounded by $2C_2
(1+1/\beta)\beta'$. Moreover, as discussed above, after sufficiently
many dummy moves we may assume Bob's ball $B$ is contained in $U$.
We renumber balls so that such a sufficiently small ball is labelled
$B_1$.

\ignore{
Let $B_0$ be Bob's ball of radius $\rho'$ at the start of the game,
after Alice has made as many initial dummy moves as necessary to
ensure the above bounds as well as the radius of the balls being
smaller than $\rho_0$. Take the image of this ball under $f$; by the
established bounds, the image is contained in a ball of radius
$2C_1\rho'$, which we abbreviate by $\rho$, with this ball playing the
role of Bob's first move in the $S$-game.  
}
\ignore{Note that we can assume the diameter of $B_k$ tendsto zero, since
otherwise $\cap B_k$ contains an open set which must intersect with the dense set $f(S)$.
}

From this point on we will call our game (played with target set
$f^{-1}(S) \cup U^c$ and parameter $\beta$) Game 1, and use induction
to construct a legal sequence of moves 
$$B'_1 \supset B'_1 \ssm A'_1 \supset B'_2 \supset \dots$$
in a Game 2, played with target set $S$ and parameter $\beta'$. In
Game 2 Alice will play according 
to the strategy we have assumed she has. At the same time we will
describe a strategy for Alice in Game 1 and a sequence $1=j_1 < j_2 <
\cdots$, such that  
\eq{eq: first dict}{
f(B_{j_i}) \subset B'_i}
for all $i$.
This will imply that
$$f\big(\bigcap_i B_i\big) = f\big(\bigcap_i B_{j_i}\big) = \bigcap_i
f(B_{j_i}) \subset \bigcap_i B'_i \subset S,$$ 
so that $\bigcap_i B_i \in f^{-1}(S) \cup U^c$ and Alice wins. 

In
order to make this construction possible, in our induction we will
have to guarantee that some additional properties hold. 
Namely we will choose $j_1=1< j_2 < \cdots$, and sets
$A_i, B'_i$, $A'_i$ so that together with Bob's chosen moves $B_i$, we
will have 
for each $\ell$: 
\begin{itemize}
\item[(i)]
All sets chosen 
are valid moves 
for Games 1 and 2 respectively. 
\item[(ii)]
Writing 
$$B_{j_i} = B(\x_i, \rho_i), \ 
B'_i = B\big(f(\x_i), \rho'_i\big), \ \rho'_i =
C_1\rho_i,$$
we have 
\eq{radiiratio}{\beta^n \le \frac{\rho_{i}}{\rho_{i-1}} < \beta^{n-1}
  \ \ \ \ \ (n \ \mathrm{as \ in \ \equ{choose n}}) .}
\item[(iii)]
$A'_i = \cl_i^{\prime(\varepsilon'_i)}$ are dictated by
Alice's strategy for Game 2, and 
$$
A_{j_i} \supset f^{-1}\left(\cl_i^{\prime(\eta \rho'_i)}\right),
\ \ \mathrm{where} \ \eta = (1+1/\beta)\beta'.$$
\end{itemize}

Note that \equ{eq: first dict} follows from (ii) and the definition of
$C_1$, thus specifying
choices satisfying (i)--(iii) will prove the theorem. 

Suppose we have carried out these choices up to stage $\ell-1$. 
Alice will play dummy moves until the first time the radius $\rho$ of Bob's
ball $B_m$ satisfies $\rho/\rho_{\ell-1} < \beta^{n-1}$, and set
$j_{\ell}=m$. Note that $j_{\ell}$ is well-defined since the radii 
chosen by Bob tend to zero, and at each step $\rho$ decreases by a
factor of at most $\beta$. Our choice of $j_{\ell}$ ensures that
\equ{radiiratio} holds for $i=\ell$.    
Now we let $B'_{\ell} = B\big(f(\x_\ell), \rho'_\ell\big)$.
We claim
that $B'_{\ell}$ is a valid move in Game 2.
By \equ{radiiratio} and the definition of $\rho'_i$,
\eq{radiiratio2}{\beta' = \beta^n \le \frac{\rho_\ell}{\rho_{\ell-1}}
  = \frac{\rho'_\ell}{\rho'_{\ell-1}} < \beta^{n-1} = \frac{\beta'}{\beta}.}
Since $B_{j_\ell} \subset B_{j_{\ell-1}}$, we must have $\dist(\x_{\ell-1},\x_\ell) \le 
	\rho_{\ell-1}-\rho_\ell$
and thus
$$\dist\big(f(\x_{\ell-1}), f(\x_\ell)\big) \leq
C_1\dist(\x_{\ell-1},\x_\ell)\le C_1(\rho_{\ell-1}-\rho_\ell) =
\rho'_{\ell-1}-\rho'_\ell\,.$$ 
Hence  $B'_\ell \subset B'_{\ell-1}$.
Since $B_{j_\ell} \cap A_{j_{\ell-1}} = \varnothing$ 
and 
$A_{j_{\ell-1}} \supset
f^{-1}\left(\cl_{\ell-1}^{\prime(\eta\rho'_{\ell-1})}\right)$,
it follows via \equ{radiiratio2} that 
$$
\dist(f(\x_\ell), \cl'_{\ell-1}) \geq \eta \rho'_{\ell -1} =
\left(1+\frac{1}{\beta}\right) \beta' \rho'_{\ell-1} >
\beta'\rho'_{\ell-1} + \rho'_{\ell}.
$$
That is, 
$B'_\ell
\cap \cl_{\ell-1}^{\prime (\beta' \rho'_{\ell-1})} = \varnothing,$ hence $B'_\ell
\subset B'_{\ell-1}\ssm A'_{\ell-1}$, proving the claim. 

Now choose $A'_\ell$ according to Alice's strategy in Game 2,
say $A'_\ell = \cl_{\ell}^{\prime(\varepsilon'_\ell)}$ for some
$\varepsilon'_\ell \le \beta'\rho'_\ell$. We will show that
$f^{-1}$ does not move $A'_\ell$ too much from a hyperplane
neighborhood in $B_{j_\ell}$. 
To this end, 
fix $\y'\in \cl'_\ell$, 
and define $\cl_{\ell} \df
\bar{L}_{\y'}(\cl'_\ell)$, which is also a
$k$-dimensional subspace. For any $\y \in B'_\ell$ with $\dist(\y,
\cl'_\ell) < \eta \rho'_\ell, $ let $\y_0$ be the projection of $\y $
onto $\cl'_\ell$. Then, by the choice of the initial ball $B_1$, we
have 
$$\dist\left(f^{-1}(\y), \bar{L}_{\y'}(\y_0)\right ) \leq
\dist\left(f^{-1}(\y), f^{-1}(\y_0)
\right) +  \dist\left(f^{-1}(\y_0), \bar{L}_{\y'}(\y_0)
\right) \leq 2C_2 \eta \rho_\ell'.$$

It follows that for 
$$\varepsilon_\ell \df 2C_2 \eta \rho'_\ell=C \eta \rho_\ell$$
we have 
$$A_{j_\ell}
\df \cl_{\ell}^{(\varepsilon_\ell)} \supset
f^{-1}\big(\cl_\ell^{\prime(\eta\rho'_\ell)}\big).$$
But, by \equ{choose n},
$C\eta < \beta$, so that 
$\varepsilon_\ell < \beta\rho_\ell$, i.e. $A_{j_{\ell}}$ is a
valid move for Alice in Game 1 satisfying (iii).
This concludes the inductive step  and completes the proof.
\end{proof}

\ignore{
We define Alice's strategy at an arbitrary stage of the game, 
in groups of moves each of which decrease the radius by a factor of approximately $\beta'^n$.
Each such group of moves will culminate in a ball
We define Alice's strategy in reference 
$B'_{j_i} = B(\x_i,\rho_i')$, 
to which we will associate
a move in the $S$ game, $B_i = B(f(\x_i),\rho_i) \supset f(B'_{j_i})$, where $\rho_i \df 2\|J\|\rho'_i$.
We will show that the balls $B_i$ are the moves
of a play of the game in which Alice uses her assumed strategy, so that 
$$f(\cap_i B'_{j_i}) = \cap_i f(B'_{j_i}) \subset \cap_i B_i \in S,$$
which will imply that $\cap_i B'_i \in f^{-1}(S)$, so the theorem will follow.

Suppose $B'_{j_i}= B(\x_i,\rho'_i)$ has been chosen with 
$\beta'^{n(i+1)}\rho' \le \rho'_i < \beta'^{ni}\rho'$. 
Consider the image of this ball under $f$; by the established bounds, the image is contained in 
$B(f(\x_i),\rho_i)$, where $\rho_i \df 2||J||\rho'_i$. 
We let this ball be Bob's $i$-th move, $B_i$ in this game. We need to show this ball is indeed contained in Bob's previous ball $B_{i-1}$.

$B_{k-1}$ was obtained by  pulling back $B'_{j_{k-1}}$, so if $\x_0$ is the center of $B'_{j_{k-1}}$, 
$f^{-1}(\x_0)$ is the center of $B_{k-1}$. Some number of turns later,
Bob chooses a point $\x_1$ as the center of
$B'_{j_k}$ which has radius $\rho'_k$.

Since $B'_{j_i} \subset B'_{j_{i-1}}$, we must have $\dist(\x_{i-1},\x_i) \le \rho'_{i-1}-\rho'_i$
and thus
$$\dist(f(\x_{i-1}), f(\x_i)) \leq 2\|J\|\dist(\x_{i-1},\x_i)=2\|J\|(\rho'_{i-1}-\rho'_i) = \rho_{i-1}-\rho_i.$$
Hence, $B_i \subset B_{i-1}$.

Now according to our assumed winning strategy for this game, Alice
wants to remove some  
$\varepsilon\rho_i$-neighborhood of a $k$-dimensional subspace for $\varepsilon \leq \beta'$. 
We call this subspace $W_i$. To determine the distortion
on $f^{-1}(W_i^{(\beta\rho_i)})$, fix $\y_0\in W_i$, and consider any
point $\y \in W_i^{(\beta\rho_i)}$ and its projection $\x$ onto
$W_i$. We can write
$f^{-1}(\y)=f^{-1}(\x)+J^{-1}_{\x}(\y-\x)+\delta(\y,\x)$ and
$f^{-1}(\x)=f^{-1}(\y_0)+J^{-1}_{\y_0}(\x-\y_0)+\delta(\x,\y_0)$. Since
$W'_i \df f^{-1}(\y_0)+J^{-1}_{\y_0}(W_i)$ is also a $k$-dimensional
subspace and  
$$f^{-1}(\y)-[f^{-1}(\y_0)+J^{-1}_{\y_0}(\x-\y_0)]=f^{-1}(\y)-f^{-1}(\x)+\delta(\x,\y_0),$$ 
we have
$$\dist(f^{-1}(\y),W'_i) \le 2\|J^{-1}\|\dist(\x,\y).$$
It follows that
$$f^{-1}(W_i^{(\beta\rho_i)})
\subset W_i^{\prime(\varepsilon\rho)},$$
where, by \ref{choose n}, $$\varepsilon \df 2\|J^{-1}\|\beta\rho_i=C\beta\rho'_i < \beta'\rho'_i.$$
Alice removes this set on her $i$-th turn.
The image of the intersection point of the $f^{-1}(S)$ game is in 
$$f(\bigcap_i B'_{j_i}) = \bigcap_i f(B'_{j_i}) \subset \bigcap B_i \subset S.$$
}
\ignore{
\begin{proof}[Proof of Theorem \ref{stability'}]
Alice begins by playing dummy moves until Black has chosen a ball $B_i$ of radius $\rho_i$ small enough that $B_i \cap \partial U \subset \cl^{(\beta\rho_i)}$ for some affine hyperplane $\cl$.
Alice will take $A_i = \cl^{(\beta\rho_i)}$ so that $B_{i+1}$ is a subset of either $U$ or $U^c$.
In the latter case, we are obviously done. In the former, Alice may use the strategy given in the proof of Theorem \ref{stability}.
\end{proof}

\begin{proof}[Proof of Theorem \ref{incompr general}]
Let $U \subset \rd$ be open with $U \cap K \neq \varnothing$ and let
$f_i : U \to \rd$ be $C^1$ nonsingular maps.
By the second-countability of $\rd$, we can cover $U$ by open balls $U_j\subset U$.
Clearly, $\dim(U \cap K) = \sup_j \dim(U_j \cap K)$, so it will suffice to 
prove \equ{interswithk} for each $U_j$.
But, by Theorem \ref{stability'}, $\bigcap_{i=1}^\infty f_i^{-1}(S) \cup U_j^c$ is HAW
on $K$ for each $j$, so by Lemma \ref{pl dim bound}, we have for each open set $V\subset \rd$
with $V \cap K \neq \varnothing$
$$\dim\left(\bigcap_{i=1}^\infty f_i^{-1}(S) \cup U_j^c \cap K \cap V\right) = \dim(K\cap V).$$
Taking $V = U_j$ yields \equ{interswithk} and we are done.
\end{proof}
}

For the proof of Theorem \ref{BA} we need the so-called `simplex
lemma', whose proof goes back to Schmidt and Davenport: 

\begin{lem} \cite[Lemma 4]{KTV}
\label{simplex}
\noindent 
For every $\beta \in (0,1)$ 
and for every $k\in \mathbb{N}$ let
\begin{center}
$U_k\df\left\{\frac{\p}{q}: q\in \mathbb{N},\ 
\p \in \mathbb{Z}^{d}\;\text{and}\;\; \beta ^{\frac{-d}{d+1}(k-1)}\leq q<\beta ^{\frac{-d}{d+1}k}\right\}$.
\end{center} 
Denote by $V_d$ the volume of the $d$-dimensional unit ball.
Then for every 
\begin{equation}
\label{lemma} 0 < r< \beta (d!\, V_d)^{-1/d}
\end{equation}
and for every $\x\in\rd $ there exists an affine hyperplane $\mathcal{L}$ such that 
\begin{center}$U_k\cap B(\x,\beta^{k-1}r) \subset \mathcal{L}$.
\end{center}
\end{lem}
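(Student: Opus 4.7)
The plan is to argue by contradiction. Assuming that $U_k\cap B(\x,\beta^{k-1}r)$ is not contained in any affine hyperplane, I can select $d+1$ affinely independent points $\p_0/q_0,\ldots,\p_d/q_d$ from it, and then derive a contradiction by bounding the $d$-dimensional volume of the simplex $\Delta$ they span from both sides.

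For the lower bound I would use the classical Davenport--Schmidt integer-determinant trick. The $(d+1)\times(d+1)$ matrix whose rows are $(1,\p_i/q_i)$ has determinant equal to $\pm d!\cdot\mathrm{vol}(\Delta)$; after multiplying the $i$-th row by $q_i$ to clear denominators, it becomes an integer matrix whose determinant is nonzero by affine independence, hence of absolute value at least $1$. Using the upper bound $q_i<\beta^{-dk/(d+1)}$ and the fact that there are $d+1$ such factors, this yields
\begin{equation*}
\mathrm{vol}(\Delta)\ \ge\ \frac{1}{d!\,q_0q_1\cdots q_d}\ >\ \frac{\beta^{dk}}{d!}\,.
\end{equation*}

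For the upper bound, since $\Delta\subset B(\x,\beta^{k-1}r)$, even the crude estimate $\mathrm{vol}(\Delta)\le V_d(\beta^{k-1}r)^d$ (bounding the simplex by the ball that contains it) is enough. Combining the two inequalities gives $\beta^{dk}/d! < V_d\,\beta^{d(k-1)}r^d$, i.e.\ $\beta^d < d!\,V_d\,r^d$, which contradicts the hypothesis $r<\beta(d!\,V_d)^{-1/d}$ from \eqref{lemma}.

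I do not expect any serious obstacle: the argument is short and essentially a pigeonhole-style volume comparison. The only place requiring a little care is verifying that the matrix obtained after clearing denominators really has a nonzero integer determinant, but this is immediate from affine independence of the $d+1$ chosen rationals.
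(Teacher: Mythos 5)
Your proof is correct: the determinant/volume comparison goes through exactly as you describe, and the exponent bookkeeping ($q_0\cdots q_d<\beta^{-dk}$ versus $V_d(\beta^{k-1}r)^d$) yields precisely the contradiction with \eqref{lemma}. The paper itself gives no proof, quoting the lemma from \cite[Lemma 4]{KTV}, and your argument is the standard Davenport--Schmidt simplex argument used there, so there is nothing to add.
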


\begin{proof}[Proof of Theorem \ref{BA}] Let $\beta<1/3$. When the
  hyperplane $\beta$-absolute game begins, Alice makes dummy moves
  until the radius $\rho$ is small enough to satisfy  \eqref{lemma}
  with $r\leq \rho$. 
Then one  sets $c=\beta^2\rho$. After this, let $B_{j_k}$ be the
subsequence of moves where the radius $\rho_{j_k}$ first satisfies
$\beta^{k-1}\rho \geq \rho_{j_k} > \beta^k\rho$. On turns not in this
subsequence Alice makes dummy moves; on the turns $j_k$, consider the
rational points in $U_k$. $U_k \cap B_{j_k}$ is contained in a
hyperplane $\mathcal{L}_k$ by Lemma \ref{simplex}, so Alice chooses 
$$A_{j_k + 1}\df \mathcal{L}_k^{(\beta^{k+1}\rho)}\,.$$
But note that
$$ \beta^{k+1}\rho =c\beta^{k-1} \ge c q^{-(1 + 1/d)}$$
whenever $q$ is a denominator of one of the rational points from
$U_k$. Thus any $\x\in B_{j_k + 1}$ 
is at least $c q^{-(1 + 1/d)}$ away from $\vp/q\in U_k$. Satisfying
this for all $k$ and comparing with \equ{def ba} shows $\cap_j B_j \in
{\bf{BA}}_d$. 
\end{proof}

\begin{proof}[Proof of Theorem \ref{ery}]
Let $\lambda$ be the 
spectral radius of $R$.
If $\lambda = 1$, then obviously
every eigenvalue of $R$ must have modulus $1$.
By a theorem of Kronecker \cite{Kro}, they must be roots of unity,
so there exists an $N\in\N$ such that the only eigenvalue of $R^N$ is $1$. 
Thus $R^N= I$.
Hence, for any $y \in\T^d$, $$\tilde E(R^N,y) \supset \R^d \smallsetminus (\y+\Z^d)\,,$$
where $\y$ is an arbitrary vector in $\pi^{-1}(y)$. Thus $\tilde E(R^N,y)$
is HAW, since $\y +\Z^d$ is countable.
Hence $\tilde E(R^N, z)$ is HAW whenever $z\in f_R^{-i}(y)$, where $0 \leq i < N$.
Thus the intersection $$\tilde E(R,y) = \bigcap_{i=0}^{N-1}\bigcap_{z\in f_R^{-i}(y)} \tilde E(R^N, x)$$
is also HAW.

Otherwise let $\ell \in\N$ be the smallest integer such that $\lambda^{-\ell} < \beta$,
and let $a = |\det (R)^{-\ell}|$.
Then $R^{-j}(\Z^d)\subset a\Z^d$ for $j\in\{0,1,\dots,\ell\}$.
Let 
$b > 0$ be such that
$R^{-j}\big(B(0,1)\big)\subset B(0,b)$ for $0 \leq j \leq \ell$.

Let $V$ and $W$ be the largest $R$-invariant subspaces on which all eigenvalues
have absolute value equal to, and less than, $\lambda$ respectively.
Then, since $R$ is semisimple, $\R^d = V \oplus W$. 
Since the eigenvalues of $R |_V$ are of absolute value $\lambda$
and $R$ is semisimple, there exists $\delta_1 > 0$ such that, for all $\vv\in V$ and $j\in \N$,
\begin{equation}
\label{delta_1}
\|R^{-j}\vv\| \leq \delta_1\lambda^{-j}\|\vv\|.
\end{equation}
Similarly, since all eigenvalues of $R^{-1}$ have absolute value at least $\lambda^{-1}$
and $R$ is semisimple, there exists $\delta_2 > 0$ such that for all $\vu\in \R^d$ and $j\in\N$ 
\begin{equation}
\label{delta_2}
\|R^{-j}\vu\| \geq \delta_2\lambda^{-j}\|\vu\|.
\end{equation}

Again, choose  an arbitrary vector $\y\in\pi^{-1}(y)$. Let $t_0$ be the minimum positive value of $\frac{1}{3b}\dist\big(\y - R^{-j}(\y),a \mathbf{z}\big)$,
ranging over $j \in \{0,1,\dots,\ell\}$ and $\mathbf{z} \in \Z^d$.
\ignore{
$$t = t(j_1,j_2,m_1,m_2) = \frac{\left\|\left(R^{-j_1}(y)+am_1\right) 
-\left(R^{-j_2}(y)+am_2\right)\right\|}{b}
$$
}
Then since $b\geq 1$, by the triangle inequality we have that,
for any $\vm_1, \vm_2 \in \Z^d$ and $0 \le j \le \ell$ such that $\y+\vm_1 \neq \R^{-j}(\y+\vm_2)$,
\begin{equation}
\label{distance}
\dist\left(B(\y+\vm_1,t_0),
R^{-j}(B(\y+\vm_2,t_0)\right) 
\geq \dist\left(B(\y+\vm_1, bt_0), B(R^{-j}(\y+\vm_2),bt_0)\right)
\geq t_0
\end{equation}
\ignore{
\begin{equation}
\label{distance}
\dist\left(R^{-j_1}(\y) + B\left(0,bt_0\right)+a\Z^d,
R^{-j_2}(\y)+B\left(0,bt_0\right)+a\Z^d\right) 
\geq t_0
\end{equation}
whenever $R^{-j_1}(\y)\ne R^{-j_2}(\y)$.
\ignore{Let $t_0$ be the minimum of $\frac13t(j_1,j_2,m_1,m_2)$, 
ranging over all $(j_1,m_1)$ and $(j_2,m_2)$ with $t(j_1,j_2,m_1,m_2) \neq 0$.
}
}
Let $k, j_1, j_2\in\Z_+$ be such that $j_1 \leq j_2$ and 
$\beta^{-k} \leq \lambda^{j_i} <\beta^{-(k+1)}$.
Note that, by our choice of $l$, $0 \le j_2-j_1 \leq \ell$.
By (\ref{delta_2}) and (\ref{distance}),
for any $0 < t < t_0$ and $\vm_1,\vm_2 \in \Z^d$ such that 
$\y+\vm_1 \neq \R^{-j_2+j_1}(\y+\vm_2)$,
\begin{equation}
\label{distance2}
\dist\left(R^{-j_1}\big(B(\y+\vm_1,t)\big),R^{-j_2}\big(B(\y+\vm_2,t)\big)\right)
\geq \delta_2t_0\beta^{k+1}.
\end{equation}

Let $p : \rn \to V$ be the projection onto $V$ parallel to $W$, and let
 $M = \|p\|_{op}$ (with respect to the Euclidean norm).
Alice will play arbitrarily until Bob chooses a ball of radius
$\rho \leq \frac{\beta}{2}\delta_2t_0$. 
Reindexing, call this ball $B_1$.
Choose a subsequence of moves $B_{jk}$ such that $B_{jk}$ has radius smaller than $\frac12\delta_2t_0\beta^{k+1}$,
so by (\ref{distance2}) 
if it intersects two sets of the form
$R^{-j_i}\big(B(\y+\vm_i,t)\big)$
 with 
$\beta^{-k} \leq \lambda^{j_i} < \beta^{-(k+1)}$, $j_1 \le j_2$, and $\vm_i\in\Z^d$,
then we must have $\y+\vm_1 = R^{-j_2+j_1}(\y+\vm_2)$.
Then by our choice of $b$,
$$R^{-j_2}\big(B(\y+\vm_2,t)\big) \subset R^{-j_1}\left(B\big(R^{-j_2+j_1}(\y+\vm_2),bt\big)\right) = 
	R^{-j_1}\big(B(\y+\vm_1,bt)\big)\,.$$
Thus all sets of the above form that intersect $B_{jk}$ must be contained in a single
set of the form $R^{-j}\big(B(\y+\vm,bt)\big)$.
Since
$\diam\big(p(B(\y,bt))\big) \leq 2Mbt$, we have by (\ref{delta_1}) that
for $t = \min\left(t_0,\frac{\beta\rho}{2Mb\delta_1}\right)$,
$$\diam\left(p\big(R^{-j}(B(\y+\vm,t))\big)\right)
\leq 2\delta_1Mbt\lambda^{-j} \leq 2\delta_1Mbt\beta^{k} \leq \beta^{k+1}\rho,$$
so each $R^{-j}\big(B(\y+\vm,t)\big)$ intersecting $B_{jk}$ 
is contained in $\mathcal{L}^{(\varepsilon)}$,
where $\mathcal{L}$ contains a translate of $W$ and $\varepsilon \leq \beta^{k+1}\rho$.

\ignore{
By our choice of $\beta$, 
\begin{equation}
\label{decaying1}
\mu\left(B(\mathcal{B}_k) \cap \mathcal{L}^{(2\varepsilon)}\right) < D\mu(B(\mathcal{B}_{k})) ,
\end{equation}
and by Definition \ref{decaying} (ii) 
\begin{equation}
\label{decaying2}
\mu(B(x_k,(1-\alpha)\beta^k\rho)) > D\mu(B(\mathcal{B}_k)),
\end{equation}
where $x_k$ is the center of $B(\mathcal{B}_k)$.

By (\ref{decaying1}) and (\ref{decaying2}), 
there exists a point $x\in\text{ supp }\mu \cap B(\mathcal{B}_k)$ such that, for any $m\in\Z^d$,
$$\dist(x,R^{-j}(B(y,t)+m)) > \alpha\beta^k\rho\,\,\,\, \text{and}\,\,\,\,
\dist(x,\partial B(\mathcal{B}_{k+1})) \geq \alpha\beta^k\rho.$$
Alice will choose this point as the center of $\mathcal{W}_{k+1}$.
}

On her $(k+1)$-st move, Alice will remove $\mathcal{L}^{(\varepsilon)}$.
By induction, Alice will play in such a way that $\x \in \cap B_{jk}$
satisfies $\|R^j(\x)-\vm-\y\| \geq t$ for all $\vm\in\Z^d$ and $j \in \Z_+$.
Hence, $\x \in \tilde E(R,y)$, and Alice wins.
\end{proof} 

\ignore{For this example, one can state a version of the theorem
  intrinsic to the setting of the torus.  
\comdmitry{I am not sure if we need this remark and corollary at
  all.}Define $E(R,y)=\{ x \in \T^d : y \notin \overline{\{R^j(x): j
  \in \mathbb{N} \}}\}$ and 
$$E_A=\cap_{y \in A} \cap{R} \pi(\tilde E(R,y)),$$ the intersection
being over a countable set $A$ and surjective semisimple
endomorphisms. \cite{D} showed $E_{\Q^d/\Z^d}$ is winning on $\T^d $,
and \cite{BFK} showed $E_A$ is (strong) winning on $\T^d \cap K$ for
an absolutely decaying $K$, which also follows from the proof
above. Using the fact that $\pi^{-1}(E_A)$ is $(d-1)$-dimensionally
absolute winning, we obtain results on its stability under maps of the
torus (rather than $\rd$), namely 
\begin{cor}
If $f_n$ is any sequence of $C^1$-diffeomorphisms of $\T^d$, $\cap_n
f_n(E_A)$ is winning on $\T^d \cap K$, where $K$ is absolutely
decaying in the sense that $\pi^{-1}(K)$ is. 
\end{cor}
\begin{proof} For a single map $f$, consider a small enough ball $B$
  in $\T^d$ and consider the lift $\pi_0^{-1}$ to a particular
  fundamental domain $\tilde B$ in $\R^d$. By the above proof $E_A$ is
  HAW on $\tilde B$. The map $g$ taking $\tilde B$ back to the torus,
  applying $f$, and taking a lift $\pi_1^{-1}$ to $\rn$ again is a
  diffeomorphism since $f$ is. Thus $g(E_A)$ is HAW, hence winning, on
  $g(\tilde B)$ by our main theorem, hence on $g(\tilde B) \cap
  \pi_1^{-1}(K)$. Projecting again gives the desired winning set in
  $\T^d$. 
\end{proof}}

\section{Games played on proper subsets}

\label{fr}
An application of Schmidt games to proving  abundance of badly
approximable vectors on certain fractal  
subsets of $\rd$ first appeared in \cite{F}, and was motivated by
earlier related results \cite{W, KLW, KW1, KTV}. 
The main idea is simple: if $K$ is a closed subset of $\rd$ and $0 <
\alpha,\beta < 1$, one lets Bob and Alice successively choose nested
closed balls in $\rd$ as in \equ{balls} and \equ{radii}, but with an  
additional constraint that the centers of all the balls belong to $K$. (An equivalent approach 
is to view
$K$ as a metric space with the metric induced from $\rd$ and play the
$(\alpha,\beta)$-game there with $S\cap K$ being the target set, but 
replacing the usual containment of balls with the stronger one, namely
the containment of the corresponding balls in $\rd$.)  
One says that $S \subset \rd$ is  {\sl $(\alpha,\beta)$-winning on
  $K$\/} if Alice has a strategy guaranteeing that $\cap_iB_i\in S$
regardless of the way Bob chooses to play; 
{\sl $\alpha$-winning on $K$\/}  and {\sl winning on $K$\/} are defined accordingly.
Note that $\cap_iB_i$ is also automatically in $K$ since the latter
is closed and the centers of balls 
are chosen to be in $K$.

The following lemma, proved in   \cite{F} and adapted from Schmidt \cite{S1}, gives a condition on 
a set $K$ allowing one to estimate from below the \hd\ of $S\cap K$
for every $S$ which is winning on $K$. We will use the following
notation: for $K \subset \rd$, $\x \in K$, $\rho > 0$  and $0 < \beta
< 1$, let $N_K(\beta,\x,\rho)$ denote the maximum number of disjoint
balls of radius $\beta \rho$ centered on $K$ and  contained in
$B(\x,\rho)$.  

\begin{lem}\label{liorstheorem} \cite[Theorem 3.1]{F}
Suppose there exists positive $M, \delta, \rho_0$ and $\beta_0$ such that \eq{lotsofballs}{N_K(\beta,\x,\rho) \geq M\beta^{-\delta}} whenever $\x\in K$, $\rho < \rho_0$ and $\beta < \beta_0$. Then $\dim(S\cap K\cap U) \ge \delta$ whenever $U$ is an open set with $U\cap K \ne\varnothing$ and $S$ is  winning  on $K$.
\end{lem}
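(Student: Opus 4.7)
The plan is to exhibit inside $S \cap K \cap U$ a Cantor-type subset whose Hausdorff dimension approaches $\delta$, built as the set of intersection points along branches of a tree of Bob's choices in the game. Since $S$ is winning on $K$, fix $\alpha \in (0,1)$ for which Alice has a strategy winning the $(\alpha,\beta)$-game on $K$ against every $\beta$; the exact value of $\beta \in (0,\beta_0)$ will be specified at the end. Because $U \cap K \ne \varnothing$ and $U$ is open, Bob may open with a ball $B_1 \subset U$ centered on $K$ of radius less than $\rho_0$. At each subsequent stage, once Alice has played $A_i$ -- a ball of radius $\alpha\rho(B_i)$ centered on $K$ inside $B_i$ -- the hypothesis \equ{lotsofballs} applied to $A_i$ supplies an integer $N \geq M\beta^{-\delta}$ of pairwise disjoint balls of radius $\rho(B_{i+1}) = \alpha\beta\rho(B_i)$, centered on $K$ and contained in $A_i$. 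Letting Bob branch across all $N$ such choices produces a rooted tree whose depth-$n$ nodes index $N^n$ balls, each of radius $\rho_n \df (\alpha\beta)^n \rho(B_1)$. A short induction -- siblings are disjoint by construction, while two cousins inherit disjointness from their already-disjoint depth-$(n-1)$ parents -- shows the full depth-$n$ generation is pairwise disjoint for every $n$. Along any infinite branch, Alice's winning strategy places the nested intersection in $S$; closure of $K$ together with the centering on $K$ places it in $K$; and it lies in $B_1 \subset U$. Writing $C$ for the resulting compact set, we have $C \subset S \cap K \cap U$.

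I now bound $\dim C$ from below via the mass distribution principle. Assign mass $N^{-n}$ to each depth-$n$ ball and pass to the weak-$*$ limit $\mu$, a Borel probability measure supported on $C$. For any ball $B(x,r)$ with $\rho_n \leq r < \rho_{n-1}$, a standard volume-packing observation uses the disjointness of the depth-$(n-1)$ balls -- each of radius exceeding $r$ -- to bound by $3^d$ the number of them that meet $B(x,r)$. Consequently $B(x,r)$ meets at most $3^d N$ depth-$n$ balls, yielding $\mu(B(x,r)) \leq 3^d N^{1-n}$. Comparing with $r^s \geq \rho_n^s = (\alpha\beta)^{ns}\rho(B_1)^s$, the bound $\mu(B(x,r)) \leq C_s r^s$ reduces to the single algebraic inequality $N(\alpha\beta)^s \geq 1$, equivalently $M \alpha^s \beta^{s-\delta} \geq 1$.

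For any fixed $s < \delta$, the exponent $s - \delta$ is negative, so this inequality is satisfied once $\beta$ is chosen sufficiently small (and still $<\beta_0$). The mass distribution principle then gives $\dim C \geq s$, whence $\dim(S \cap K \cap U) \geq s$; letting $s \nearrow \delta$ yields the desired bound $\dim(S \cap K \cap U) \geq \delta$. The main technical point is the bookkeeping on scales -- verifying that \equ{lotsofballs} applies at every stage, and that the depth-$n$ generation of the tree remains pairwise disjoint by induction -- which is precisely what allows the mass estimate to close with a purely geometric constant independent of the level.
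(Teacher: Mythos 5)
Your proof is correct and follows essentially the same route as the argument the paper relies on (the lemma is quoted from \cite{F}, whose proof is Schmidt's classical scheme): branch Bob's moves over the disjoint balls guaranteed by \equ{lotsofballs} to build a tree-like Cantor set inside $S\cap K\cap U$, then apply the mass distribution principle with the scale bookkeeping you describe. No gaps; the only cosmetic point is to fix the number of children per node at $N=\lceil M\beta^{-\delta}\rceil$ so the uniform mass assignment is well defined, which you effectively do.
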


We remark that in \cite{F} the lemma is stated for 
sets $K$ supporting absolutely friendly measures and with $\beta_0 = 1$ and $U = \rd$, but the proof does not require all this.
In particular, whenever \equ{lotsofballs} can be satisfied with $\delta = \dim(K)$, such as when $K$ supports
an Ahlfors regular measure (see the next section for more detail), this gives the full \hd\ 
of the intersection of $K$ and $S$ at any point of $K$. 

Our next goal is to define the $k$-dimensional $\beta$-absolute game played on $K$.
In this new version Bob and Alice will choose sets $B_i$ and $A_i$ as in the 
game played on $\rd$, with Bob's choices
subjected to the additional constraint that they are centered on $K$.
That is, Bob initially chooses $\x_1\in K$ and $\rho_1> 0$, thus
defining a closed ball $B_1 = B(\x_1,\rho_1)$; 
then at each stage, after Bob chooses $\x_i \in K$ and $\rho_i > 0$,
Alice chooses an affine subspace $\cl$ of dimension $k$ and  $0 <
\varepsilon \leq \beta$, and then Bob chooses $\x_{i+1}\in K$ 
and $\rho_{i+1} \ge \beta\rho_i$ such that
$$B_{i+1}\df B(\x_{i+1},\rho_{i+1}) \subset B_i \ssm A_i\,.$$
As before,  we declare Alice the winner if \equ{Alicewins} holds,
and  
say that $S\subset \rd$ is {\sl  $k$-dimensionally  $\beta$-absolute
  winning on $K$\/} if Alice has a  
winning strategy  
regardless of how Bob chooses to play. Observe however that it might
sometimes happen that Alice removes all or most  
of $K$ after finitely many steps\footnote{Note that the same scenario
  could happen in the game on $\rd$ if $\beta$ was chosen to be bigger
  than $1/3$, hence this restriction on $\beta$ in the 
  rule
  introduced by McMullen.}, leaving no valid moves for Bob. 
For example, if $K$ is a singleton, Alice can remove it on her first turn, forcing
the game to stop. In this case, we will declare Bob the winner (and so
it would not be beneficial for Alice 
to play in this fashion).

Let us say that $S\subset \rd$ is {\sl $k$-dimensionally absolute  winning on $K$\/}
if there exists $\beta_0$ such that Alice has a strategy to win  the
$k$-dimensional $\beta$-absolute game game on $K$ for all 
 $\beta \leq \beta_0$.
As before, $(d-1)$-dimensional absolute  winning on $K$ will be referred to as {\sl hyperplane absolute winning\/} ({\sl HAW}) on $K$.

\ignore{
$k$-dimensionally  $\beta$-absolute  winning implies $k$-dimensionally  $\beta'$-absolute  winning whenever $\beta' \ge \beta$.
We will say that $S$ is  {\sl $k$-dimensionally  absolute  winning\/}
if it is $k$-dimensionally  $\beta$-absolute  winning for every  
$0 < \beta < 1/3$ (equivalently, for arbitrary small positive
$\beta$). f, for example, our strategy at one point calls for removing
a set which leaves Bob only with moves having relative radius \beta_0
or smaller? In the \beta_0 game he can make a move and we can keep
playing according to our strategy and eventually win, but in the \beta
> \beta_0 game he has no moves and we lose.

Let us 
}

Because Alice loses when she leaves Bob with no valid moves, 
it will be useful to have a condition on $K$
which guarantees that
Bob will  have an available move even when Alice decides to choose a subspace which meets $K$.
The following geometric condition ensures 
exactly this for small enough initial radius,
which as we will see can be assumed without loss of generality.

\begin{defn}
\label{diffuseness}
A closed set $K\subset\rd$ is said to be {\sl $k$-dimensionally $\beta$-diffuse\/} (here $0 \leq k < d$, $0<\beta<1$) if there exists 
$\rho_K > 0$ such that for any $0 < \rho \le \rho_K$, $\x\in K$,
and any $k$-dimensional affine subspace $\cl$, there exists $\x' \in K$ such that
$$
\x' \in B(\x,\rho)\ssm \cl^{(\beta\rho)}.
$$
We say that $K$ is 
{\sl $k$-dimensionally diffuse\/} if it is $k$-dimensionally $\beta_0$-diffuse for some $\beta_0 < 1$
(and hence for all $\beta \le \beta_0$). When $k = d-1$, this property will be referred to as {\sl hyperplane diffuseness}; clearly it implies $k$-dimensional diffuseness for all $k$.
\end{defn}

For a class of trivial examples of sets satisfying those
conditions, it is clear that $\R^d$ itself is
hyperplane $\beta$-diffuse for all $\beta < 1$, 
and so is the closure of any bounded open subset of $\R^d$ with
smooth boundary; more generally, any $m$-dimensional compact 
smooth submanifold of $\R^d$ is $k$-dimensionally diffuse whenever $m >
k.$ 
Moreover, many proper subsets of $\rd$ can also be shown to 
have the diffuseness property, such as, most notably, limit sets of
irreducible family of self-similar or self-conformal  
contractions of $\rd$ \cite{H, KLW, U2}. More examples will be given in \S\ref{meas}.
\ignore{The next example is also elementary, but it will be quite important in what follows:

Indeed,  for example Alice's strategy in the $\beta$-game at one point
may call for removing a set which leaves Bob only  
with allowed moves having relative  radius not bigger than $\beta$;
thus increasing the parameter to $\beta'$ 
may render her $\beta$-winning strategy unusable. In the $\beta$-game
Bob could make a move and Alice could keep playing according to her
strategy and eventually win, but in the $\beta'$ game Bob has no moves
and Alice loses. 

The above remark motivates the following definition:

\begin{lem}
\label{closed}\comdmitry{It will be nice to have a written proof of this lemma.}
Let $U\subset \rd$ be a bounded open set with smooth boundary, and let $K$ be  $k$-dimensionally $\beta$-diffuse with $U\cap K \ne\varnothing$. Then $\overline{U}\cap K$ is 
also  $k$-dimensionally  $\beta$-diffuse. In particular, $\overline{U}$ itself is  hyperplane $\beta$-diffuse for any $0<\beta<1/3$.
\end{lem}
}

As we will be using this definition to choose balls in the absolute game,
it will be useful to have the following equivalent version of the definition.

\begin{lem}
\label{equiv diffuseness}
Let $K$ be $k$-dimensionally $\beta'$-diffuse
and let $\beta \le \frac{\beta'}{2+\beta'}$.
Then for any $0 < \rho \le \rho_K$, $\x\in K$, and any $k$-dimensional affine subspace $\cl$, 
there exists $\x' \in K$ such that
\eq{def diff}{B(\x',\beta\rho) \subset B(\x,\rho)\ssm \cl^{(\beta\rho)}\,.}
\end{lem}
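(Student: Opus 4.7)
The plan is straightforward: apply the $\beta'$-diffuseness property of $K$ at the point $\x$ with a slightly shrunken radius, and then verify by direct computation that the resulting point $\x'$ may serve as the center of the desired ball. Concretely, set $\rho'' \df (1-\beta)\rho$. Since $\rho'' < \rho \le \rho_K$, the hypothesis produces $\x' \in K$ with
\[
\x' \in B(\x, \rho'') \ssm \cl^{(\beta'\rho'')},
\]
i.e.\ $\|\x' - \x\| \le (1-\beta)\rho$ and $\dist(\x', \cl) \ge \beta'(1-\beta)\rho$. This $\x'$ is my candidate for \equ{def diff}.

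It remains to check the two containments making up \equ{def diff}. The first, $B(\x', \beta\rho) \subset B(\x, \rho)$, is immediate from the triangle inequality using $\|\x' - \x\| \le (1-\beta)\rho$. For the second, $B(\x', \beta\rho) \cap \cl^{(\beta\rho)} = \varnothing$, it suffices to show $\dist(\x', \cl) \ge 2\beta\rho$, for then any $\y \in B(\x', \beta\rho)$ satisfies $\dist(\y, \cl) \ge 2\beta\rho - \beta\rho = \beta\rho$ by the triangle inequality. In view of the bound $\dist(\x', \cl) \ge \beta'(1-\beta)\rho$ already established, this reduces to the numerical inequality $\beta'(1-\beta) \ge 2\beta$, which, after rearrangement to $\beta(2+\beta') \le \beta'$, is exactly the standing hypothesis $\beta \le \frac{\beta'}{2+\beta'}$.

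There is no real obstacle here; the proof is a one-line geometric observation followed by a constant-chasing check, and the threshold $\frac{\beta'}{2+\beta'}$ in the statement is precisely what the triangle inequality forces. The only piece of bookkeeping is verifying $\rho'' \le \rho_K$ so that the diffuseness hypothesis is applicable, and this is automatic from $\rho'' < \rho \le \rho_K$.
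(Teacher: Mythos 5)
Your proof is correct and follows essentially the same route as the paper's: apply the $\beta'$-diffuseness at $\x$ with the shrunken radius $(1-\beta)\rho$ and observe that the hypothesis $\beta \le \frac{\beta'}{2+\beta'}$ is exactly what makes $\beta'(1-\beta)\rho \ge 2\beta\rho$, so the triangle inequality finishes the job. The paper merely phrases this by first reducing to the extremal case $\beta = \frac{\beta'}{2+\beta'}$ (where $\beta' = \frac{2\beta}{1-\beta}$ gives equality), while you carry the general inequality through; your write-up is, if anything, slightly more explicit about the final containment check.
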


\begin{proof} Clearly it suffices to prove the result for $\beta= \frac{\beta'}{2+\beta'}$.
Let $\x\in K$, $0 < \rho \le\rho_K$, and $\cl$ a $k$-dimensional affine subspace.
Since $\beta' = \frac{2\beta}{1-\beta}$,
we can use diffuseness to find a point $\x' \in K$ such that
$$\x' \in B\big(\x,(1-\beta)\rho\big) \ssm \cl^{(\beta'(1-\beta)\rho)}
	= B(\x,(1-\beta)\rho)\ssm\cl^{(2\beta\rho)}.$$
This $\x'$ satisfies \equ{def diff}.
\end{proof}

The diffuseness condition can also be stated 
in terms of microsets, a notion introduced by H. Fursternberg in \cite{Fu}.
Let $B_1$ be the unit ball
in $\rd$ and given a ball $B$, let $T_B$ be the homothety sending $B$ to $B_1$.
Let $K$ be a 
closed subset of $\rd$. 
Any Hausdorff-metric limit point
of a sequence of sets $T_{B_i}(B_i\cap K)$, with each $B_i$ centered on $K$ and
$\diam(B_i)\to 0$, is called a {\sl{microset}} of $K$.

\begin{lem}
$K$ is $k$-dimensionally diffuse if and only if no 
microset of $K$ is contained in a $k$-dimensional affine subspace. 
\end{lem}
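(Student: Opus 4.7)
\medskip\noindent\emph{Proof plan.} The plan is to prove both implications by contrapositive, using a rescaling argument that connects local failure of diffuseness at shrinking scales $\rho_n\to 0$ with microsets obtained by normalizing $B_n\cap K$ via $T_{B_n}$.

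For the direction ``$K$ $k$-dimensionally diffuse $\Rightarrow$ no microset lies in a $k$-plane'', assume for contradiction that some microset $M=\lim T_{B_n}(B_n\cap K)$ is contained in a $k$-dimensional affine subspace $\cl$, with $B_n=B(x_n,\rho_n)$, $x_n\in K$, and $\rho_n\to 0$. For any fixed $\beta\in(0,1)$, Hausdorff convergence forces $T_{B_n}(B_n\cap K)\subset \cl^{(\beta)}$ for all sufficiently large $n$, and applying $T_{B_n}^{-1}$ (which scales the $\beta$-neighborhood to a $\beta\rho_n$-neighborhood) yields $B_n\cap K\subset\cl_n^{(\beta\rho_n)}$ with $\cl_n\df T_{B_n}^{-1}(\cl)$ still a $k$-dimensional affine subspace. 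Since $\rho_n\to 0$ and $\beta<1$ is arbitrary, this defeats $\beta_0$-diffuseness for every $\beta_0<1$, contradicting $k$-dimensional diffuseness.

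For the converse, suppose $K$ is not $k$-dimensionally diffuse. Negating the definition with $\beta_0=1/n$ and $\rho_K=1/n$ produces sequences $\rho_n\le 1/n$, $x_n\in K$, and $k$-dimensional affine subspaces $\cl_n$ satisfying $B(x_n,\rho_n)\cap K\subset\cl_n^{(\rho_n/n)}$. Set $B_n\df B(x_n,\rho_n)$ and $\cl'_n\df T_{B_n}(\cl_n)$, so that $T_{B_n}(B_n\cap K)\subset (\cl'_n)^{(1/n)}\cap B_1$, where $B_1$ is the unit ball. I would then extract a subsequence along which $T_{B_n}(B_n\cap K)$ converges in the Hausdorff metric to a microset $M$ and $\cl'_n$ converges, on compact subsets of $\rd$, to a $k$-dimensional affine subspace $\cl'$; combining these gives $M\subset\cl'$, exhibiting a microset contained in a $k$-plane.

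The main obstacle is the compactness argument for the affine subspaces $\cl'_n$: one must ensure that a Hausdorff limit of the sets $\cl'_n\cap B_1$ is itself the intersection of a genuine $k$-dimensional affine subspace with $B_1$, rather than some degenerate lower-dimensional object. My approach is to parameterize the space of $k$-dimensional affine subspaces meeting a fixed ball by a pair consisting of a foot of perpendicular from the origin and a $k$-plane in the Grassmannian $\mathrm{Gr}(k,d)$; both factors lie in compact spaces, guaranteeing a subsequential limit and preserving $k$-dimensionality. A minor technicality---that a point of $(\cl'_n)^{(1/n)}\cap B_1$ may be realized by a point of $\cl'_n$ lying slightly outside $B_1$---is resolved by working throughout in a fixed enlargement of $B_1$ (say of radius $2$), after which the inclusion $M\subset\cl'$ passes to the limit without incident.
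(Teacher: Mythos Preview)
Your argument is correct, but the paper's proof is organized around a single auxiliary notion, the \emph{$k$-dimensional width} of a set $A$, defined as
\[
\inf\{\varepsilon:\cl^{(\varepsilon)}\supset A\text{ for some $k$-dimensional affine subspace }\cl\},
\]
which is continuous in the Hausdorff metric and vanishes exactly when $A$ lies in a $k$-plane. With this device both directions become symmetric: diffuseness gives a uniform positive lower bound on the width of every normalized set $T_{B_i}(B_i\cap K)$ (hence of every microset), while failure of $\beta$-diffuseness produces normalized sets of width at most $\beta$, and letting $\beta\to 0$ together with compactness of the space of closed subsets of $B_1$ yields a microset of zero width. This sidesteps entirely the extraction of a limiting affine subspace and the attendant Grassmannian/foot-of-perpendicular parameterization that you need.

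Your approach is a valid direct argument; the main cost is the compactness step for the planes $\cl'_n$, which you correctly identify and handle (noting that $0=T_{B_n}(x_n)\in(\cl'_n)^{(1/n)}$ keeps the feet bounded). The paper's width-function route is shorter and more conceptual, while yours has the modest advantage of exhibiting the containing $k$-plane explicitly as a limit of the witnessing planes at finite stages.
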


\begin{proof}
We will define the {\sl{$k$-dimensional width}} of a set $A$ to be
$$
\inf \{\varepsilon : \cl^{(\varepsilon)} \supset A \text{ for some $k$-dimensional affine subspace }\cl\}.
$$
Notice that $k$-dimensional width is continuous with respect to the
Hausdorff metric, and that $A$ is contained in a  $k$-dimensional
affine subspace if and only if its $k$-dimensional width is zero. 

First suppose $K$ is $k$-dimensionally $\beta$-diffuse for some $\beta$, and let
$B_i$ be a sequence of balls centered on $K$ with radii $\rho_i\to0$.
Then for sufficiently large $i$, the radius of $B_i$ is less than $\rho_K$,
so the diffuseness assumption guarantees that, for every
$k$-dimensional affine subspace $\cl$, 
$(B_i \ssm \cl^{(\beta\rho_i)}) \cap K \neq \varnothing$.
Thus, $T_{B_i}(B_i\cap K)$ has $k$-dimensional width at least $\beta$ for all
sufficiently large $i$, so any Hausdorff-metric limit point of this sequence does as well.

Conversely, suppose $K$ is not $k$-dimensionally $\beta$-diffuse.
Then there exist balls $B_i$ centered on $K$ with radii $\rho_i \to 0$
and $k$-dimensional affine subspaces $\cl_i$
such that $B_i \ssm \cl_i^{(\beta\rho_i)}$ is disjoint from $K$,
so that each $T_B(B_i\cap K)$ has $k$-dimensional width at most $\beta$.
By the continuity of 
the $k$-dimensional width, there exists
a microset of $K$ with $k$-dimensional width at most $\beta$.
Thus, if $K$ is not $k$-dimensionally diffuse, there exist microsets of $K$
with arbitrarily small $k$-dimensional width, and by compactness,
there are microsets with zero $k$-dimensional width.
\end{proof}

One advantage of playing on a diffuse set is the following lemma, which is a generalization
of \eqref{comparison}:

\begin{lem}
\label{small enough}
If $K$ is $k$-dimensionally $\beta$-diffuse and $S$ is
$k$-dimensionally absolute winning on $K$, then
$S$ is $k$-dimensionally $\beta'$-absolute winning on $K$ whenever
$\beta' \le \frac{\beta}{2+\beta}$. 
\end{lem}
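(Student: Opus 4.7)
The plan is for Alice, while actually playing the $\beta'$-game, to simulate her winning strategy from a smaller absolute game whose existence is guaranteed by the hypothesis. Unwinding the definition of ``$k$-dimensionally absolute winning on $K$'', fix a $\beta_0$ such that Alice has a winning strategy in the $k$-dimensional $\beta_0$-absolute game on $K$. If $\beta' \le \beta_0$ then the hypothesis directly furnishes a $\beta'$-winning strategy and there is nothing to prove, so we may assume $\beta_0 \le \beta'$. Alice's plan in the $\beta'$-game is simply to pretend Bob's moves are being made in the $\beta_0$-game and respond with her prescribed $\beta_0$-winning strategy.

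The next step is to check the two straightforward legality conditions that make this simulation well-defined. On the one hand, any Alice-move $\cl^{(\varepsilon)}$ dictated by her $\beta_0$-winning strategy at stage $i$ satisfies $\varepsilon \le \beta_0 \rho_i \le \beta' \rho_i$, so it is a legitimate Alice-move in the $\beta'$-game. On the other hand, any Bob-move made in the $\beta'$-game satisfies $\rho_{i+1} \ge \beta' \rho_i \ge \beta_0 \rho_i$, so it is automatically a legitimate Bob-move in the $\beta_0$-game. Consequently the sequence of moves generated as the $\beta'$-game proceeds is simultaneously a legal partial play of the $\beta_0$-game, and once the play is infinite the $\beta_0$-winning property gives $\bigcap_i B_i \cap S \ne \varnothing$, which is precisely Alice's winning condition in the $\beta'$-game.

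The substantive point — and the place where diffuseness enters — is ensuring that the game actually continues indefinitely, i.e., that Alice's strategy never leaves Bob unable to play a $\beta'$-move, which under the stated convention would count as a loss for her. Alice first plays a few dummy moves so that Bob's ball has radius $\le \rho_K$; then at each subsequent stage, after her removal of $A_i = \cl_i^{(\varepsilon_i)}$ with $\varepsilon_i \le \beta' \rho_i$, I would apply Lemma \ref{equiv diffuseness} with diffuseness parameter $\beta$ and ``ball'' parameter $\beta'$ (the hypothesis $\beta' \le \beta/(2+\beta)$ is exactly what is needed). This produces $\x' \in K$ with
\[
B(\x', \beta' \rho_i) \;\subset\; B(\x_i, \rho_i) \ssm \cl_i^{(\beta' \rho_i)} \;\subset\; B(\x_i, \rho_i) \ssm A_i,
\]
which is a valid $\beta'$-move for Bob. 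The anticipated main obstacle is just keeping the two games' legality and termination conventions in sync: diffuseness with the explicit bound $\beta/(2+\beta)$ is precisely the geometric input that prevents the convention ``Alice loses if Bob has no move'' from ever triggering, and without it the simulation argument would break down even though Alice's $\beta_0$-strategy is, formally speaking, always applicable.
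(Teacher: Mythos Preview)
Your argument is correct and follows the paper's proof essentially verbatim: both simulate a $\beta_0$-strategy (the paper calls it $\beta''$) inside the $\beta'$-game, observe that the legality constraints only loosen for Alice and tighten for Bob, and invoke Lemma~\ref{equiv diffuseness} with the bound $\beta' \le \beta/(2+\beta)$ to guarantee Bob is never left without a move. The only cosmetic difference is that the paper begins with the standing reduction ``without loss of generality the radii $\rho_i$ tend to zero'' before making dummy moves down to $\rho_K$, whereas you fold this into the phrase ``plays a few dummy moves''; both treatments are equally informal on this point.
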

\begin{proof}
There is no loss of generality in assuming that the diameters of the
balls chosen by Bob tend to zero. Thus when the $\beta'$-game begins,
Alice can choose $A_i$ disjoint from $B_i$ 
until Bob has chosen a ball of radius less than $\rho_K$.
Reindexing, call this $B_1$. We have assumed $S$ is $k$-dimensionally
$\beta''$-absolute winning for all small enough positive $\beta''$; in
particular for some $\beta'' \leq \beta'$.   
Note that at any stage of the game, playing with the parameter $\beta'$ instead of
$\beta''$ affords Alice more possible moves but eliminates some of Bob's possible moves.
By Lemma \ref{equiv diffuseness}, in view of  $\beta' \le \frac{\beta}{2+\beta}$, Bob 
will never win by having {\textsl{all}} 
of his potential moves removed,
so a $\beta''$-strategy for the initial ball $B_1$ is also a $\beta'$-strategy 
for this initial choice.
\end{proof}

Our next result asserts that diffuseness of $K$ is sufficient for a
lower estimate on the dimension of 
sets winning on $K$:

\begin{thm}
\label{dim}
Let $K\subset \rd$ be $k$-dimensionally $\beta$-diffuse set, let $S\subset\rd$ be winning on $K$,
and let $U$ be an open set with $U\cap K \ne\varnothing$.
Then 
\eq{dimest}{\dim(S\cap K\cap U) \ge \frac{-\log(k+2)}{\log\beta-\log(2+\beta)}\,.}
\end{thm}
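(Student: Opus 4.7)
My plan is to deduce the theorem from Lemma~\ref{liorstheorem} by verifying the ball-counting estimate \equ{lotsofballs} with
$$\delta \df \frac{\log(k+2)}{\log(2+\beta)-\log\beta},$$
which is exactly the right-hand side of \equ{dimest}. The geometric heart of the argument is a single-scale claim: for every $\x\in K$ and every $0<\rho\le\rho_K$, there exist points $\x_1,\dots,\x_{k+2}\in K$ such that the balls $B(\x_i,\gamma\rho)$ are pairwise disjoint and contained in $B(\x,\rho)$, where $\gamma \df \beta/(2+\beta)$.

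To prove this single-scale claim, first invoke Lemma~\ref{equiv diffuseness} to upgrade $\beta$-diffuseness into the form that for every $\x\in K$, every $0<\rho\le\rho_K$, and every $k$-dimensional affine subspace $\cl$ there exists $\x'\in K$ with $B(\x',\gamma\rho)\subset B(\x,\rho)\ssm \cl^{(\gamma\rho)}$. Then select $\x_1,\dots,\x_{k+2}$ greedily: once $\x_1,\dots,\x_j$ with $j\le k+1$ have been chosen, let $\cl$ be any $k$-dimensional affine subspace containing all of them (possible because $j\le k+1$), and apply the upgraded diffuseness with this $\cl$ to obtain $\x_{j+1}$ with $B(\x_{j+1},\gamma\rho)$ disjoint from $\cl^{(\gamma\rho)}$. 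The crucial observation is that each previously selected ball $B(\x_i,\gamma\rho)$ is automatically contained in $\cl^{(\gamma\rho)}$ (because $\x_i\in\cl$), so the new ball is disjoint from every prior one without any further shrinking of the radius.

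Iterating this construction inside each of the $k+2$ sub-balls produces $(k+2)^n$ pairwise disjoint balls of radius $\gamma^n\rho$ centered on $K$ inside $B(\x,\rho)$, so $N_K(\gamma^n,\x,\rho)\ge(k+2)^n$ for every $n$. For arbitrary small $\eta>0$, choosing $n$ with $\gamma^{n+1}<\eta\le\gamma^n$ gives $N_K(\eta,\x,\rho)\ge M\eta^{-\delta}$ with the advertised $\delta = \log(k+2)/\log(1/\gamma)$ and a uniform $M>0$. Applying Lemma~\ref{liorstheorem} to the winning set $S$ on $K$ with this $\delta$ then yields $\dim(S\cap K\cap U)\ge\delta$, which is exactly \equ{dimest}. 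The main step that requires care is the greedy construction above, since getting the sharp exponent depends on the ``absorption'' observation that an affine subspace through all previous centers already engulfs the earlier balls in its $\gamma\rho$-neighborhood; a cruder separation argument would only yield a weaker bound with $\gamma/2$ in place of $\gamma$.
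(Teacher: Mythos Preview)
Your proof is correct and follows essentially the same route as the paper: both reduce to Lemma~\ref{liorstheorem} via the single-scale estimate $N_K(\gamma,\x,\rho)\ge k+2$ with $\gamma=\beta/(2+\beta)$, iterate to $(k+2)^n$ at scale $\gamma^n$, and read off $\delta$ and $M=1/(k+2)$. The only difference is that the paper asserts the single-scale bound as ``easy to see'' from Lemma~\ref{equiv diffuseness}, whereas you spell out the greedy construction and the absorption observation $B(\x_i,\gamma\rho)\subset\cl^{(\gamma\rho)}$ explicitly.
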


\begin{proof}
\ignore{Let $\rho_K$ be as in Definition \ref{diffuseness} and
let $\rho \le \rho_K$ be such that there exists
a ball centered on $K$ contained in $B$.
\ignore{
Let $\rho_0$ be the radius of $B$
and let $\rho_1 = \min(\rho_0,\rho_K)$,
where $\rho_K$ is as in Definition \ref{diffuseness}.}
It is easily seen that, since $K$ is $\kappa$-diffuse, 
each ball $B(x_0,\rho)$ with $x_0\in K$ and $\rho < \rho_K$ contains at least
$d-\kappa+2$ disjoint balls of radius $\beta\rho$ centered on $K$.
Thus, by distributing mass in the natural way, we can construct a tree-like subset
of $K \cap B$ which supports a measure $\mu$
satisfying, for every $x \in \supp \mu$ and $\rho < \rho_K$,
$$C_1 \rho^\gamma \leq \mu(B(x,\rho)) \le C_2 \rho^\gamma,$$
where $\gamma = \frac{\log\left(\frac{1}{d-\kappa+2}\right)}{\log \beta}$ and $C_i > 0$.
Since $S$ is $\kappa$-absolute winning, it is
winning on $\supp \mu$, so by \cite[Proposition 5.1]{KW2} the proposition follows.}

It is easy to see that, by Lemma \ref{equiv diffuseness}, $N_K(\beta',\x,\rho) \geq k+2$ for 
$\beta'\le\frac{\beta}{2+\beta}$,
$\rho < \rho_K$ and $\x\in K$; thus $N_K\left(\left(\frac{\beta}{2+\beta}\right)^n,\x,\rho\right) 
\geq (k+2)^n$, so we get the hypothesis of Lemma \ref{liorstheorem}  with $\delta=\frac{-\log\left(k+2\right)}{\log\beta - \log(2+\beta)}$ and $M = \frac{1}{k+2}\,$. \end{proof}

\ignore{
\begin{lem}
For any $k$-dimensionally $\beta_0$-diffuse set $K$,
if $\beta < \beta' \leq \beta_0$ and 
$S$ is $k$-dimensionally $\beta$-absolute winning on $K$, 
then $S$ is also $k$-dimensionally $\beta'$-absolute winning on $K$.
In particular, $S$ is $k$-dimensionally absolute winning on $K$
if there exists some $\beta_1 > 0$ such that 
$S$ is $k$-dimensionally $\beta$-absolute winning on $K$ for all
$\beta < \beta_1$.
\end{lem}

\begin{proof}
We will describe a strategy for the $k$-dimensional $\beta'$-absolute game on $K$.
The game begins with Bob choosing $B_1$ of radius $\rho_1 \leq \rho_K$.
Since $\beta < \beta'$, Alice can choose $A_1$ according to her $\beta$-strategy.
Again since $\beta <\beta'$, Bob's next choice $B_2$ will be a valid move
in the $\beta$-game, so once again Alice can use her strategy for that game to choose $A_2$.
Continuing in this way, Alice can ensure that $\cap B_i$ intersects $S$.
\end{proof}
}

Proving a set to be $k$-dimensionally  absolute  winning on $K$ has several useful implications.
First of all, it implies the winning property discussed in the beginning of the section:

\begin{prop}
\label{strongwinning}
Let $K\subset \rd$ be  $k$-dimensionally $\beta$-diffuse, and let $S\subset \rd$
be $k$-dimensionally absolute  winning on $K$. 
Then $S$ is $\frac{\beta}{2+\beta}$-winning\footnote{In fact 
the argument shows that $S$ is  $\frac{\beta}{2+\beta}$-strong winning  on $K$, 
where the latter is defined
as in \cite{Mc}.} on $K$.
\end{prop}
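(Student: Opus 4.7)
The approach is to convert Alice's winning strategy in the $k$-dimensional absolute game on $K$ into a winning strategy in the Schmidt game on $K$. Set $\alpha = \beta/(2+\beta)$ and fix an arbitrary $\gamma \in (0,1)$; the goal is to exhibit an $(\alpha,\gamma)$-winning strategy for Alice on $K$. Since $\alpha\gamma < \alpha = \beta/(2+\beta)$, Lemma~\ref{small enough} guarantees that $S$ is $k$-dimensionally $(\alpha\gamma)$-absolute winning on $K$; let $\sigma$ denote Alice's winning strategy in that game.

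Alice's Schmidt-game strategy proceeds as follows. While Bob's ball $B_i = B(\x_i,\rho_i)$ has $\rho_i > \rho_K$, she plays the concentric ball $A_i = B(\x_i,\alpha\rho_i)$, which is a valid move since $\x_i \in K$. The radii shrink by a factor $\alpha\gamma$ per round, so after finitely many moves $\rho_i \le \rho_K$; re-index so that this happens at $i=1$ and treat $B_1$ as the opening move in a simulated $(\alpha\gamma)$-absolute game.

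Inductively, suppose $B_i$ has been identified with the $i$-th Bob move in the simulated absolute game; the strategy $\sigma$ prescribes a $k$-dimensional affine subspace $\cl_i$ together with some $\varepsilon_i \le \alpha\gamma$. By Lemma~\ref{equiv diffuseness} (applied with diffuseness parameter $\beta$ and target parameter $\alpha$), there exists $\x'_i \in K$ with $B(\x'_i,\alpha\rho_i) \subset B(\x_i,\rho_i) \ssm \cl_i^{(\alpha\rho_i)}$. Alice plays $A_i = B(\x'_i,\alpha\rho_i)$ in the Schmidt game. Bob's response $B_{i+1} \subset A_i$, of radius exactly $\alpha\gamma\rho_i$ and centered on $K$, simultaneously serves as Bob's $(i+1)$-th move in the simulated absolute game: its radius meets the required lower bound, and $B_{i+1} \subset A_i \subset B_i \ssm \cl_i^{(\alpha\rho_i)} \subset B_i \ssm \cl_i^{(\varepsilon_i\rho_i)}$ because $\varepsilon_i \le \alpha\gamma \le \alpha$. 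Therefore $\{B_i\}$ is a legal play of the $(\alpha\gamma)$-absolute game in which Alice follows $\sigma$, so $\bigcap_i B_i \cap S \ne \varnothing$; since the Schmidt radii tend to zero, the intersection is a single point, which therefore lies in $S\cap K$.

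The main point to get right is the calibration of the simulated game's parameter: it must be $\alpha\gamma$, not $\alpha$, so that Bob's Schmidt-forced radius $\alpha\gamma\rho_i$ precisely meets the absolute-game lower bound. Once that is done, the translation between game formats is essentially mechanical, and the same construction in fact yields the stronger $\alpha$-strong winning conclusion recorded in the footnote, since it never relies on the strict equality of radii that distinguishes the Schmidt game from its strong version.
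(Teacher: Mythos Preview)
Your proof is correct and follows essentially the same route as the paper's own argument: set $\alpha=\beta/(2+\beta)$, use Lemma~\ref{small enough} to obtain an $(\alpha\gamma)$-absolute winning strategy, make concentric dummy moves until $\rho\le\rho_K$, and then at each step invoke Lemma~\ref{equiv diffuseness} to place Alice's Schmidt ball inside $B_i\ssm\cl_i^{(\alpha\rho_i)}$, so that Bob's reply is a legal move in the simulated absolute game. The only differences are notational (your $\gamma$ is the paper's $\beta''$).
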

\begin{proof}
By Lemma \ref{small enough}, we know that $S$ is $k$-dimensionally 
$\beta'$-absolute
winning on $K$ for any $\beta' \le \frac{\beta}{2+\beta}$.
Let $\alpha = \frac{\beta}{2+\beta}$, $0 < \beta'' < 1$, and 
$\beta'= \alpha\beta'' < \frac{\beta}{2+\beta}$.
We want to win the $(\alpha,\beta'')$-game on $K$ using the strategy
we have in the $k$-dimensional $\beta'$-absolute game.
When the game begins, Alice will choose $A_i$ to be concentric with $B_i$
until, reindexing, Bob chooses $B_1=B(x,\rho)$ with $\rho <\rho_K$.
Now at the $i$-th stage, suppose Alice's strategy is to remove $\cl^{(\varepsilon\rho)}$ where $\varepsilon \leq \beta'< \alpha$.
Since $K$ is $k$-dimensionally $\beta$-diffuse, by Lemma \ref{equiv diffuseness} there exists
$\x' \in B(\x,\rho)\cap K$ such that
$B(\x',\alpha\rho) \subset B(\x,\rho) \ssm\cl^{(\varepsilon\rho)}$, so she chooses $\x'$ as the center of her ball.
\ignore{
We have \[ \mu(B(x,(1-\alpha)\rho) \cap H^{(\varepsilon+\alpha)\rho})
\leq C\frac{(\beta'+\alpha)^\gamma}{(1-\alpha)^\gamma}.\] Clearly,
since $\beta<\alpha$, for small enough $\alpha$, the right-hand side
is $<1$ regardless of $\beta'$.  
When this is the case there exists a point in $B(x,(1-\alpha)\rho)
\ssm H^{(\beta+\alpha)\rho}$, and this point is a valid move for Alice
in the $(\alpha,\beta')$ game, which succeeds in avoiding
$H^{\beta\rho}$. 
}
Bob's next move $B_i$ is a ball contained in Alice's of radius at
least $\alpha\beta''\rho=\beta'\rho$, so this is a valid move for Bob
in the $k$-dimensional $\beta'$-absolute game. 
Since $B_i$ is the same ball in the two games, we have $\cap B_i \in S$,
so $S$ is $\alpha$-winning.
\end{proof}

In particular, in view of Theorem \ref{dim},  \equ{dimest} holds
whenever $K\subset \rd$ is  $k$-dimensionally $\beta$-diffuse  and
$S\subset \rd$ is $k$-dimensionally absolute winning on $K$. One also has

\begin{prop}
\label{cap} 
Let $K$ be any closed subset of $\rd$, $\beta > 0$, and for each $i\in \N$ let $S_i$
be $k$-dimensionally $\beta'$-absolute winning 
on $K$ for any $\beta' \le \beta$. Then the
 countable intersection $\cap_i S_i$  
is also $k$-dimensionally $\beta'$-absolute winning on $K$ for any $\beta' \le \beta$.  
\ignore{In particular, in view of Lemma \ref{small enough},
if $K$ is $k$-dimensionally diffuse, then the countable intersection of
sets $k$-dimensionally absolute winning on $K$ is
also $k$-dimensionally absolute winning on $K$.}
\end{prop}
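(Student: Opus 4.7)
Fix $\beta'\le\beta$. The plan is to construct a winning strategy for Alice in the $k$-dimensional $\beta'$-absolute game on $K$ with target $\bigcap_i S_i$ by the standard diagonalization trick, interleaving Alice's individual winning strategies for the $S_i$ on disjoint infinite subsequences of turns.

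Partition $\mathbb{N}$ into infinite subsets $I_i=\{t^{(i)}_1<t^{(i)}_2<\cdots\}$ with uniformly bounded consecutive gaps $t^{(i)}_{k+1}-t^{(i)}_k\le G_i$ (e.g.\ $I_i=\{2^{i-1}(2j-1):j\ge1\}$ yields $G_i=2^i$), and set $\beta_i\df(\beta')^{G_i}$. Since $\beta_i\le\beta'\le\beta$, the hypothesis supplies a winning strategy $\sigma_i$ for Alice in the $k$-dimensional $\beta_i$-absolute game on $K$ with target $S_i$. Alice's composite strategy is: on turn $n=t^{(i)}_k\in I_i$, interpret the subsequence $(B_{t^{(i)}_1},\dots,B_{t^{(i)}_k})$ of Bob's real moves together with her own past responses on the turns in $I_i$ as the history of a virtual $\beta_i$-absolute game for $S_i$, and respond as $\sigma_i$ prescribes.

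Two routine verifications show that this is well-defined: Alice's output is a legal $\beta'$-move in the real game, since $\sigma_i$ outputs a strip of width $\varepsilon\le\beta_i\rho_n\le\beta'\rho_n$; and the virtual game for $S_i$ is itself a legal $\beta_i$-absolute game on $K$, since the nesting $B_{t^{(i)}_{k+1}}\subset B_{t^{(i)}_k}\ssm A_{t^{(i)}_k}$ is inherited from the real game via $B_{t^{(i)}_{k+1}}\subset B_{t^{(i)}_k+1}\subset B_{t^{(i)}_k}\ssm A_{t^{(i)}_k}$, while iterating the radius constraint $\rho_{n+1}\ge\beta'\rho_n$ over the at most $G_i$ intermediate turns gives $\rho_{t^{(i)}_{k+1}}\ge(\beta')^{G_i}\rho_{t^{(i)}_k}=\beta_i\rho_{t^{(i)}_k}$. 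Consequently each $\sigma_i$ wins its virtual play, whence $\bigcap_k B_{t^{(i)}_k}\cap S_i\ne\varnothing$; by nesting and cofinality $\bigcap_k B_{t^{(i)}_k}=\bigcap_n B_n$, so $\bigcap_n B_n$ meets $S_i$ for every $i$.

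The main obstacle I anticipate is upgrading this to the assertion that $\bigcap_n B_n$ contains a point of $\bigcap_i S_i$, not merely a (possibly different) point of each $S_i$. This is immediate if $\bigcap_n B_n$ is a singleton, i.e.\ if $\rho_n\to 0$. To force shrinkage, one reserves an additional infinite subset $I_0\subset\mathbb{N}$ (redistributing the other $I_i$'s with correspondingly enlarged $G_i$ and smaller $\beta_i$, still $\le\beta$, so that $\sigma_i$ is still supplied by the hypothesis), on whose turns Alice plays a maximum-width strip $A_n$ around a $k$-dimensional affine subspace passing through the center of $B_n$. Since $\beta'<1/3$, the largest ball in $B_n\ssm A_n$ has radius at most $(1-\beta')\rho_n/2<\rho_n/2$, so every legal response of Bob on such a turn shrinks the radius by at least a factor of $1/2$, and performing such forcing moves infinitely often drives $\rho_n\to 0$ as required.
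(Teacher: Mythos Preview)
Your approach—interleaving the winning strategies $\sigma_i$ along a partition of $\N$ into arithmetic-like progressions and playing each $\sigma_i$ as a virtual $\beta_i$-game on its own subsequence—is precisely the Schmidt argument the paper invokes, and your bookkeeping of the legality of the composite moves is correct. You have also correctly isolated the one genuine subtlety that distinguishes the absolute game from Schmidt's original: since Bob is not forced to shrink, $\bigcap_n B_n$ may be a ball of positive radius, and the interleaving then only yields a (possibly different) point of each $S_i$ in this ball rather than a common point of $\bigcap_i S_i$.

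Your remedy via forced shrinking on a reserved index set $I_0$ is the right idea, but as written it has a gap when the game is played on a general closed $K$ (as the proposition allows). If Alice's strip passes through the center $\x_n$, then $\x_n\in A_n$, and Bob must center his next ball at some other point of $K\cap B_n$; if, say, $K\cap B_n=\{\x_n\}$, Bob has no legal reply and under the rules of the game on $K$ he is declared the winner. The fix is minor: place the strip near but not through the center, for instance take $\cl$ at distance $3\beta'\rho_n$ from $\x_n$ with $\varepsilon=\beta'$. Then $B(\x_n,\beta'\rho_n)\subset B_n\setminus A_n$ is always a legal move for Bob (since $\x_n\in K$), while any ball contained in $B_n\setminus A_n$ has radius at most $\tfrac{1+2\beta'}{2}\rho_n<\rho_n$, so the radii still shrink by a fixed factor on each $I_0$-turn and $\rho_n\to 0$ as desired.
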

\begin{proof} 
The proof is exactly the same as Schmidt's original proof
\cite[Theorem 2]{S1} for $\alpha$-winning sets.
\end{proof}

Now that we have described consequences of being able to win an absolute game on a  diffuse set $K$,
the next natural question is how to one can verify this kind of winning  property. Remarkably, it turns out that this property can be simply extracted from the corresponding one for $K= \rd$.
More generally, the following holds:

\begin{prop}\name{prop: intersection}
\label{cap K}
If $L\subset K$ are both $k$-dimensionally diffuse  and $S\subset
\rd$ is $k$-dimensionally  absolute  winning on $K$, 
then it is also $k$-dimensionally  absolute  winning on $L$.
In particular,
every set which is $k$-dimensionally  absolute  winning  on $\rd$ is $k$-dimensionally  absolute  winning on 
every  $k$-dimensionally diffuse set.
\end{prop}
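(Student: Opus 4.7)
The plan is to transport Alice's winning strategy from the game on $K$ down to the game on $L$, exploiting the inclusion $L \subset K$. Fix $\beta_0$ such that $S$ is $k$-dimensionally $\beta$-absolute winning on $K$ for every $\beta \leq \beta_0$, let $\beta_L < 1$ and $\rho_L > 0$ witness the $k$-dimensional diffuseness of $L$, and set $\beta_0' \df \min\bigl(\beta_0,\, \beta_L/(2+\beta_L)\bigr)$. I will show that $S$ is $k$-dimensionally $\beta$-absolute winning on $L$ for every $\beta \leq \beta_0'$.

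Given a play of the $\beta$-absolute game on $L$, Alice proceeds in two phases. As long as Bob's ball $B_i = B(\x_i,\rho_i)$ has $\rho_i > \rho_L$, Alice chooses $A_i$ to be a neighborhood of a $k$-dimensional subspace $\cl_i$ placed far from $B_i$, so that $A_i \cap B_i = \varnothing$; Bob is free to reply with, say, $B(\x_i,\rho_i)$ itself (legal since $\x_i \in L$), so the game continues. Once Bob selects a ball with $\rho_i \leq \rho_L$, Alice re-indexes this as $B_1$ and from then on plays her winning strategy $\sigma$ for the $\beta$-absolute game on $K$ with initial ball $B_1$. Since $L \subset K$, each of Bob's $L$-moves is also a legal $K$-move, so the subsequent play is a valid $K$-game play, and $\sigma$'s winning property yields $\bigcap_i B_i \cap S \neq \varnothing$, provided Bob always has a legal $L$-response. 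By Lemma \ref{equiv diffuseness} applied to $L$ (valid since $\beta \leq \beta_L/(2+\beta_L)$ and $\rho_i \leq \rho_L$), for any subspace $\cl_i$ chosen by Alice there exists $\x' \in L$ with
$B(\x', \beta\rho_i) \subset B(\x_i,\rho_i) \ssm \cl_i^{(\beta\rho_i)} \subset B_i \ssm A_i$,
so $B(\x', \beta\rho_i)$ is always a legal reply.

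The main subtlety I foresee is the pathological scenario in which Bob never concedes $\rho_i \leq \rho_L$, so the second phase is never entered. Then nestedness of the balls gives $\|\x_{i+1} - \x_i\| \leq \rho_i - \rho_{i+1}$, so $\{\x_i\}$ is Cauchy and converges to some $\x_\infty \in L$ (using that $L$ is closed), and a short estimate yields $\bigcap_i B_i \supseteq B(\x_\infty, r)$ with $r = \lim_i \rho_i \geq \rho_L > 0$. Since $S$ is winning on $K$ by Proposition \ref{strongwinning}, it meets every ball centered on a point of $K$; as $\x_\infty \in L \subset K$, we conclude $B(\x_\infty, r) \cap S \neq \varnothing$, and Alice wins. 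Finally, the second assertion of the proposition follows by taking $K = \rd$, which is trivially $k$-dimensionally $\beta$-diffuse for every $\beta < 1$ and every $k < d$.
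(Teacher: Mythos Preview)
Your proof is correct and takes essentially the same route as the paper's: transport Alice's $K$-strategy to the $L$-game via the inclusion $L \subset K$, use the diffuseness of $L$ (through Lemma~\ref{equiv diffuseness}) to ensure Bob always has a legal reply, and make dummy moves until the radius is small enough. Your explicit treatment of the degenerate case where Bob's radii never drop below $\rho_L$ (handled via Proposition~\ref{strongwinning}) is a detail the paper's proof of this proposition omits, though the paper disposes of the analogous issue with a one-line remark in the parallel proof of Lemma~\ref{small enough}.
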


\begin{proof} 
We assume $S$ is $k$-dimensionally  absolute  winning on $K$, and show that it is
$k$-dimensionally  absolute  winning on $L$. 
Let $\beta$ be small enough that $L$ and $K$ are both $k$-dimensionally $\beta$-diffuse,
so that by Lemma \ref{small enough} $S$ is $k$-dimensionally 
$\frac{\beta}{2+\beta}$-absolute winning
on $K$. Then consider the $\frac{\beta}{2+\beta}$-game played on $L$.
Alice will choose $A_i$ disjoint from $B_i$ until Bob has chosen a ball of
radius less than $\min(\rho_L, \rho_K)$. Reindexing, call this $B_1$.
Since playing on $L$ instead of $K$ restricts Bob's choices but not Alice's, and because
Lemma \ref{small enough} 
guarantees that Bob will never win by having no valid moves available to him,
Alice may use her strategy for the game played on $K$.
\end{proof}

Combining the above proposition with Theorem \ref{stability} we obtain
the following statement, proving Theorem \ref{incomprdiffuse}: 

 \begin{thm}
\label{stabilitydiffuse} 
 Let $S \subset \rd$ be  $k$-dimensionally  absolute  winning, 
 $U \subset \rd$ open,
and $K$ a $k$-dimensionally diffuse set. Then for any  $C^1$ nonsingular map $f: U \to \rd$, 
the set $f^{-1}(S) \cup U^c$ is
$k$-dimensionally absolute  winning on $K$. Consequently, for any
sequence $\{f_i\}$ of $C^1$ diffeomorphisms of $U$ onto (possibly
different) open subsets of $\rd$, the set 
\equ{intersdiffuse}
has  positive \hd.
\end{thm}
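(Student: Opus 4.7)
The plan is to combine the preceding results in a short chain of implications. For the first assertion, I would apply Theorem \ref{stability} to conclude that $f^{-1}(S)\cup U^c$ is $k$-dimensionally absolute winning on $\rd$. The ``In particular'' clause of Proposition \ref{cap K} then immediately transfers this to the $k$-dimensionally diffuse set $K$, giving that $f^{-1}(S)\cup U^c$ is $k$-dimensionally absolute winning on $K$.

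For the second assertion, I would apply the first part to each $f_i$ separately, so that every set $T_i\df f_i^{-1}(S)\cup U^c$ is $k$-dimensionally $\beta'$-absolute winning on $K$ for all sufficiently small $\beta'>0$. Proposition \ref{cap} then guarantees that
$$T\df\bigcap_{i=1}^\infty T_i$$
enjoys the same property. Fixing $\beta>0$ for which $K$ is $k$-dimensionally $\beta$-diffuse, Proposition \ref{strongwinning} promotes this to $\alpha$-winning on $K$ with $\alpha=\beta/(2+\beta)$, and Theorem \ref{dim} then furnishes a strictly positive lower bound on $\dim(T\cap K\cap V)$ for every open $V$ with $V\cap K\neq\varnothing$.

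It remains to pass from $T$ to $\bigcap_i f_i^{-1}(S)$. Using the standing assumption $U\cap K\neq\varnothing$ inherited from Theorem \ref{incomprdiffuse}, I would select a small open ball $V\subset U$ with $V\cap K\neq\varnothing$. Since $V\cap U^c=\varnothing$, on this $V$ the $U^c$ factor drops out, so
$$T\cap K\cap V=\Bigl(\bigcap_{i=1}^\infty f_i^{-1}(S)\Bigr)\cap K\cap V\subset \Bigl(\bigcap_{i=1}^\infty f_i^{-1}(S)\Bigr)\cap K,$$
and the positive-dimension estimate on the left transfers to the right, giving the claim for the set in \equ{intersdiffuse}.

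I do not anticipate any substantive obstacle: the entire argument is a chaining of previously established results (Theorems \ref{stability} and \ref{dim} together with Propositions \ref{cap}, \ref{strongwinning} and \ref{cap K}). The only point requiring care is the final passage from $T$ to $\bigcap_i f_i^{-1}(S)$, which forces one to first shrink $U$ to a ball $V\subset U$ so that the $U^c$ term can be discarded; without this reduction the positive-dimension conclusion would not transfer cleanly to the set \equ{intersdiffuse}.
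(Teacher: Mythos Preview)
Your proposal is correct and follows essentially the same chain of implications as the paper's own proof (Theorem~\ref{stability} $+$ Proposition~\ref{cap K} for the first part; then Proposition~\ref{cap}, Proposition~\ref{strongwinning}, and Theorem~\ref{dim} for the second). Two cosmetic differences: the paper explicitly invokes Lemma~\ref{small enough} to ensure the threshold $\beta'$ in Proposition~\ref{cap} is \emph{uniform} across all $T_i$ (namely $\beta' \le \beta/(2+\beta)$ for the fixed diffuseness constant $\beta$ of $K$), whereas you state this uniformity without citing the lemma; and the paper applies Theorem~\ref{dim} with $U$ itself as the open set (noting that $T\cap K\cap U$ is already the set \equ{intersdiffuse}, since each $f_i^{-1}(S)\subset U$), so the auxiliary ball $V$ is unnecessary, though your variant is equally valid.
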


\begin{proof}
For the first part, in view of Proposition \ref{cap K}, it suffices to
show that $f^{-1}(S) \,\cup\, U^c$ is $k$-dimensionally 
absolute winning on $\rd$, which is exactly the conclusion of Theorem
\ref{stability}. 
As for the second assertion, the union of
the set
\equ{intersdiffuse} with $U^c$ is $k$-dimensionally absolute  winning on $K$ by the first part, Lemma \ref{small enough} and
Proposition \ref{cap}, hence winning on $K$ by Proposition \ref{strongwinning}, hence its intersection with $U$, that is, the set
\equ{intersdiffuse} itself, has positive \hd\ by  Theorem  \ref{dim}.\end{proof}

\ignore{One can now collect the results proved in the last two sections and conclude with the

\begin{proof}[Proof of Theorem \ref{incomprdiffuse}]
Indeed, in view of Proposition \ref{cap K}, it suffices to show that $f^{-1}(S) \,\cup\, U^c$ is $k$-dimensionally
absolute winning on $\rd$, which is exactly the conclusion of  Theorem \ref{stability}.\end{proof}

It is  clear now that Theorem \ref{incomprdiffuse} is an immediate consequence of Theorem \ref{BA}, Proposition \ref{cap}, Theorem \ref{dim}, Proposition \ref{strongwinning} and the  hyperplane case of Theorem  \ref{stabilitydiffuse}.}

\ignore{

\begin{cor} If $S \subset \rd$ is a $k$-dimensionally  absolute  winning  set and $f: \rd \to \rd$ is a 
$C^1$-diffeomorphism, then $f(S)$ is $k$-dimensionally  absolute  winning   on any  $k$-dimensionally  diffuse  set $K$.
\end{cor}}

In particular, by Theorems  \ref{BA} and \ref{ery},  the sets $\BA_d$
and $\tilde E(R,y)$,  as well as their countable intersections and
differmorphic images, always intersect hyperplane diffuse subsets of
$\rd$, and the intersection has positive \hd. This conclusion not only 
generalizes many known results on intersection of sets of these types
with fractals, see \cite{BBFKW, BFK}, 
but provides a more conceptual `two-step' proof: to find uncountably
many points of $K\cap S$, one has to check separately the diffuseness
of $K$ and 
the absolute  winning property of $S$.
\smallskip

It is instructive to point out another simple consequence of
Proposition \ref{cap K}. Let $\Gamma$ be a one-cusp discrete group of
isometries of the hyperbolic space $\mathbb{H}^{n}$, and let
$D(\Gamma) $ be the set of lifts of endpoints of bounded geodesics in
$\mathbb{H}^{n}/\Gamma$. Dani \cite{Dani-rk1} showed that $D(\Gamma)$ is
winning, and McMullen \cite{Mc} strengthened this result, showing $D(\Gamma)$ is
absolute winning. Aravinda \cite{Ar}
proved that the intersection of 
the $D(\Gamma)$ with any $C^1$ curve is winning (and hence has
\hd\ $1$). This can now be seen to be an 
immediate corollary of Proposition \ref{cap K} and
the $0$-dimensional 
diffuseness property of smooth curves.

\ignore{
\begin{proof}[Proof of Theorem \ref{stability'}]
Alice begins by playing dummy moves until Black has chosen a ball
$B_i$ of radius $\rho_i$ small enough that $B_i \cap \partial U
\subset \cl^{(\beta\rho_i)}$ for some affine hyperplane $\cl$. 
Alice will take $A_i = \cl^{(\beta\rho_i)}$ so that $B_{i+1}$ is a
subset of either $U$ or $U^c$. 
In the latter case, we are obviously done. In the former, Alice may
use the strategy given in the proof of Theorem \ref{stability}. 
\end{proof}

\begin{proof}[Proof of Theorem \ref{incompr general}]
Let $U \subset \rd$ be open with $U \cap K \neq \varnothing$ and let
$f_i : U \to \rd$ be $C^1$ nonsingular maps.
By the second-countability of $\rd$, we can cover $U$ by open balls $U_j\subset U$.
Clearly, $\dim(U \cap K) = \sup_j \dim(U_j \cap K)$, so it will suffice to 
prove \equ{interswithk} for each $U_j$.
But, by Theorem \ref{stability'}, $\bigcap_{i=1}^\infty f_i^{-1}(S) \cup U_j^c$ is HAW
on $K$ for each $j$, so by Lemma \ref{pl dim bound}, we have for each open set $V\subset \rd$
with $V \cap K \neq \varnothing$
$$\dim\left(\bigcap_{i=1}^\infty f_i^{-1}(S) \cup U_j^c \cap K \cap V\right) = \dim(K\cap V).$$
Taking $V = U_j$ yields \equ{interswithk} and we are done.
\end{proof}
}

\ignore{
In order to obtain incompressibility of HAW sets, we will need a stronger version in the case $k = d-1$.

 \begin{thm}
\label{stability'} 
 Let $S \subset \rd$ be  hyperplane  absolute  winning, $U\subset \rd$ an open ball, $f: U \to \rd$ 
a $C^1$ nonsingular map, and $K$ a hyperplane diffuse set. 
Then $f^{-1}(S) \cap U^c$ is
hyperplane absolute  winning on $K$.
\end{thm}
}

\ignore{This immediately implies

 \begin{cor}
\label{incomprgeneral} Let $K$ be a  $k$-dimensionally  diffuse set. Then  any HAW set  is   strongly $C^1$ incompressible on $K$. \end{cor}}

Our next goal is to study the concept of the incompressibility on $K$, which calls for showing that the set  \equ{intersdiffuse}
has full \hd. 
However for that we need additional assumptions on $K$,
phrased in terms of properties of a measure whose support is equal to $K$. This is discussed in the next section.

\ignore{Thus
under certain assumptions on the set 
 $K$, phrased in terms of a measure it supports,   $\bf BA$ 
 is winning on $K$, and one has $\dim(S\cap K) > 0$ for any set $S$ which is winning on $K$;
 in fact in many cases the dimension of $S\cap K$ can be shown to be equal to $\dim(K)$.
 This theme was further exploited in \cite{BBFKW, BFK}. }

\section{Measures and the proof of Theorem \ref{incompr}}

\label{meas}

We start with  a
definition introduced  in \cite{KLW}: 
if $\mu$ is a locally finite
Borel measure on $\R^d$ and  $C, \gamma>0$,
one says that $\mu$ is
{\sl $(C,\gamma)$-absolutely decaying\footnote{This terminology differs slightly from the
one in \cite{KLW}, where a less uniform version was considered.}\/} if
there exists $\rho_{0} >0$ such that, 
for all $0 < \rho < \rho_0$, all $\x \in\text{supp}\,\mu$, all affine
hyperplanes 
$\mathcal{L}\subset \R^d$ and all $\varepsilon > 0$, one
has 
\eq{ad}{\mu\left(B(\x,\rho)\cap \mathcal{L}^{(\varepsilon)}\right) <
  C\left(\frac{\varepsilon}{\rho}\right)^{\gamma}\mu\big(B(\x,\rho)\big)\,.} 
We will say that $\mu$ is {\sl absolutely decaying\/} if it is
$(C,\gamma)$-absolutely decaying for some positive  $C, \gamma$. 

Another useful property 
is the so-called 
Federer (doubling) condition. One says that  $\mu$ is
{\sl $D$-Federer\/} if there exists $\rho_{0} >0$ 
such that
\eq{fed}{\mu\big(B(\x ,2\rho)\big) <
  D\mu\big(B(\x,\rho)\big)\,\quad\forall\,\x\in \supp \,
  \mu,\ \forall\,0 < \rho<\rho_0\,,} 
and {\sl Federer\/} if it is $D$-Federer for some $D>0$. 
Measures which are both absolutely decaying and Federer are called {\sl absolutely friendly\/}, 
a term coined in \cite{PV}.

Many examples of absolutely friendly measures can be found in
\cite{KLW, KW1, U2, SU}. The Federer 
condition is very well studied; it obviously holds when  $\mu$ is
{\sl Ahlfors regular\/}, i.e.\  when there exist positive $\delta,
c_1,c_2,\rho_0$ 
such that 
\eq{pl}{c_1\rho ^{\delta}\leq\mu\big(B(\x,\rho)\big)\leq
  c_2\rho^{\delta}\,\quad\forall\,\x\in \supp \, \mu,\ \forall\,0 <
  \rho<\rho_0 \,.}  
The above property for a fixed $\delta$ will be referred to as  {\sl
  $\delta$-Ahlfors regularity\/}. 
It is easy to see that the \hd\ of the support of a $\delta$-Ahlfors
regular measure is equal to $\delta$. 
An important class of examples of absolutely decaying and  Ahlfors regular measures is provided by
limit measures of irreducible families of contracting self-similar \cite{KLW} or self-conformal \cite{U2} transformations of $\R^d$  satisfying the open set condition, as defined by Hutchinson \cite{H}. 
See however \cite{KW1}
for an example of an absolutely friendly measure which is not Ahlfors regular. 

\ignore{Following a terminology introduced in \cite{KLW, PV}, given
  $C,\gamma > 0$ say that a locally finite Borel measure $\mu$ on
  $\rd$ is 
{\sl $(C,\gamma)$-absolutely   decaying\/} if there exists $\rho_{0} >0$ 
such that
\eq{ad}{\begin{aligned}
\mu\big(B(\x,\rho)\cap \mathcal{L}^{(\varepsilon)}\big) 
	< C(\varepsilon/\rho)^{\gamma}\mu\big(B(\x,\rho)\big)\ \\
	 \text{ for any  affine hyperplane }\mathcal{L}\subset \rn\quad \ \\  \text{
and }\forall\,\x\in \supp \, \mu,\  0 < \rho<\rho_0,
	\  \varepsilon > 0\,.\ \ 
\end{aligned}}
Here $B(\x,\rho)$ stands for the closed Euclidean ball in $\R^d$ of radius $\rho$ centered at $\x$, 
and $\mathcal{L}^{(\varepsilon)} \df \{\x \in\rn : \dist(\x,\mathcal{L}) \leq \varepsilon\}$ is the closed $\varepsilon$-neighborhood of $\mathcal{L}$.


For $n\in\N$, let $\mu$ be a $\sigma$-finite
Borel measure on $\R^d$ and let $C, \gamma>0$.
We say that $\mu$ is
\underline{$(C,\gamma)$-absolutely decaying} if there exists $\rho_{0} >0$ such that,
for all $0 < \rho < \rho_0$ and all $\x \in\text{supp }\mu$, we have
that for each hyperplane $\mathcal{L}\subset \R^d$ and each
$\varepsilon > 0$, 
\begin{center}
$\mu\big(B(\x,\rho)\cap \mathcal{L}^{(\varepsilon)}\big) <
  C\left(\frac{\varepsilon}{\rho}\right)^{\gamma}\mu\big(B(\x,\rho)\big)$. 
\end{center} We call the support of such a measure an absolutely decaying set.
\end{defn}}

Our next result shows that supports of absolutely decaying measures have the diffuseness property:


\begin{prop}
\label{diffuse}
Let $\mu$ be absolutely decaying; then  $K = \supp\,\mu$ is hyperplane diffuse (and hence also $k$-dimensionally diffuse for all $1 \le k < d$).
\end{prop}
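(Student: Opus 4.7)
My plan is to extract the hyperplane diffuseness constant $\beta_0$ directly from the decay exponent $\gamma$ and constant $C$. Fix $\x \in K = \supp \mu$, let $\rho_K = \rho_0$ (the threshold coming from the absolute decay definition), let $\rho \le \rho_K$, and let $\cl$ be an arbitrary affine hyperplane. The condition \equ{ad} gives immediately
$$\mu\big(B(\x,\rho) \cap \cl^{(\beta\rho)}\big) < C\beta^{\gamma}\,\mu\big(B(\x,\rho)\big).$$

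The key observation is that if we choose $\beta_0 > 0$ so that $C\beta_0^{\gamma} < 1$ (for instance $\beta_0 = \tfrac{1}{2}\min(1, C^{-1/\gamma})$), then for every $\beta \le \beta_0$ we get
$$\mu\big(B(\x,\rho) \cap \cl^{(\beta\rho)}\big) < \mu\big(B(\x,\rho)\big),$$
which forces $\mu\big(B(\x,\rho) \smallsetminus \cl^{(\beta\rho)}\big) > 0$. Since $\x \in \supp\mu$, the left-hand side $\mu(B(\x,\rho))$ is strictly positive, so the inequality really does produce measure-theoretic slack. Any point of positive measure in $B(\x,\rho) \smallsetminus \cl^{(\beta\rho)}$ lies in $\supp\mu = K$, which gives the desired $\x' \in K \cap B(\x,\rho) \smallsetminus \cl^{(\beta\rho)}$, verifying Definition \ref{diffuseness} for hyperplanes with constant $\beta_0$.

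This argument is essentially one short computation; I do not expect any serious obstacle. The only subtlety is ensuring $\beta_0 < 1$, which is automatic from the choice above. The parenthetical conclusion that $K$ is $k$-dimensionally diffuse for $1 \le k < d$ then follows instantly, since every $k$-dimensional affine subspace with $k < d$ is contained in some hyperplane $\cl$, so $\cl^{(\beta\rho)}$ contains the corresponding $k$-dimensional neighborhood, and the same $\x'$ works.
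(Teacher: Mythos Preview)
Your proof is correct and follows essentially the same approach as the paper: apply the absolute decay inequality with $\varepsilon = \beta\rho$, choose $\beta$ small enough that $C\beta^\gamma < 1$, and conclude that $B(\x,\rho)\smallsetminus\cl^{(\beta\rho)}$ has positive $\mu$-measure and hence meets $\supp\mu$. Your treatment is slightly more careful in ensuring $\beta_0 < 1$ and in noting explicitly that $\mu\big(B(\x,\rho)\big) > 0$ since $\x\in\supp\mu$; the paper simply takes $\beta = C^{-1/\gamma}$ and relies on the strictness of the decay inequality.
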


\begin{proof}
Let $K=\supp \mu$. If $B=B(\x,\rho)$, where we assume $\rho<\rho_0$, and 
if $\cl$ is an affine hyperplane, then 
\[ 
\mu\left(B\big(\x,\rho\big) \cap \cl^{(\beta\rho)}\right)< C\beta^\gamma\mu\left(B\big(\x,\rho\big) \right).\] 
If $\beta = \left(\frac{1}{C}\right)^{1/\gamma}$, then
$C\beta^\gamma < 1$, hence 
the intersection of $K$ with 
$B\big(\x,\rho\big) \smallsetminus \cl^{(\beta\rho)}$ is not empty.
\end{proof}

In particular, in view of Theorem \ref{dim}, if $K$ is the support of
$\mu$ as above, $U$ an open set with $K\cap U\ne \varnothing$  and $S$
is winning on $K$, then $S\cap K\cap U$ has positive \hd . Note though
that we can directly obtain a dimension bound for sets winning on
supports of \absd\ measures  using Lemma \ref{liorstheorem}: 

\begin{lem} \label{absd dim bound} 
Let $\mu$ be $(C,\gamma)$-absolutely decaying, let $K = \supp\,\mu$,
and let $S\subset \rd$ be winning on $K$. Then $\dim(S\cap K\cap U)
\geq \gamma$  for any open $U$  with $K\cap U\ne \varnothing$ . 
\end{lem}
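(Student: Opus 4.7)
My plan is to verify the packing estimate $N_K(\beta,\x,\rho) \geq M\beta^{-\gamma}$ for all $\x \in K$, $\rho < \rho_0$ and $\beta < \beta_0$, and then invoke Lemma \ref{liorstheorem} with $\delta = \gamma$ to conclude.

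The first input is a Frostman-type upper bound that falls out of absolutely decaying: for $\x \in K$, $\rho < \rho_0$, and any ball $B(\y,r)$ with $r \leq \rho$, choose any affine hyperplane $\cl$ through $\y$; then $B(\y,r) \subset \cl^{(r)}$, so
$$\mu\bigl(B(\y,r) \cap B(\x,\rho)\bigr) < C(r/\rho)^\gamma \mu\bigl(B(\x,\rho)\bigr).$$
In particular, $\mu(B(\y, 2\beta\rho)) < C(2\beta)^\gamma \mu(B(\x,\rho))$ for any $\y \in B(\x,\rho)$.

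For the packing, take a maximal $2\beta\rho$-separated subset $\{\y_i\}_{i=1}^N$ of $K \cap B(\x,(1-\beta)\rho)$. The balls $B(\y_i,\beta\rho)$ are then pairwise disjoint and lie in $B(\x,\rho)$, so $N \leq N_K(\beta,\x,\rho)$, and by maximality $K \cap B(\x,(1-\beta)\rho) \subset \bigcup_i B(\y_i, 2\beta\rho)$. Combining with the Frostman bound applied to each $B(\y_i, 2\beta\rho)$ gives
$$\mu\bigl(B(\x,(1-\beta)\rho)\bigr) \leq N \cdot C(2\beta)^\gamma \mu\bigl(B(\x,\rho)\bigr).$$

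The remaining and most delicate step is to show that, for all sufficiently small $\beta$, the ratio $\mu(B(\x,(1-\beta)\rho))/\mu(B(\x,\rho))$ is bounded below by a uniform positive constant (say $\tfrac12$). My plan is to cover the annular shell $B(\x,\rho)\setminus B(\x,(1-\beta)\rho)$ by slabs of thickness comparable to $\beta\rho$ around hyperplanes tangent to the inner sphere, each contributing at most $C\beta^\gamma\mu(B(\x,\rho))$ by absolutely decaying. This is the technical crux: a naive tangent-slab count is of order $\beta^{-(d-1)/2}$, so the argument is immediate when $\gamma > (d-1)/2$ but requires a more careful iterated hyperplane-removal for smaller $\gamma$. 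Once this shell estimate is in hand, combining with the display above yields $N_K(\beta,\x,\rho) \geq M\beta^{-\gamma}$ with $M = (2C\cdot 2^\gamma)^{-1}$, and Lemma \ref{liorstheorem} delivers $\dim(S\cap K\cap U) \geq \gamma$.
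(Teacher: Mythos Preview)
Your overall strategy --- establish the packing estimate $N_K(\beta,\x,\rho) \geq M\beta^{-\gamma}$ via a Frostman-type bound coming from absolute decay, then invoke Lemma~\ref{liorstheorem} --- is exactly the paper's approach. But the ``most delicate step'' you flag, the annular shell estimate
\[
\mu\bigl(B(\x,\rho)\smallsetminus B(\x,(1-\beta)\rho)\bigr) \le \tfrac12\,\mu\bigl(B(\x,\rho)\bigr),
\]
is a genuine gap as written: your tangent-slab count only works for $\gamma > (d-1)/2$, and you offer no argument for smaller~$\gamma$. This is not a harmless technicality --- the whole point of the lemma is to \emph{avoid} any Federer-type hypothesis, and a ratio estimate of this kind is precisely a weak doubling statement that absolute decay alone need not provide.

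The gap is, however, entirely self-inflicted. Apply your Frostman bound relative to the \emph{inner} ball $B(\x,(1-\beta)\rho)$ rather than $B(\x,\rho)$: for each $\y_i$ in your maximal separated set and a hyperplane $\cl_i\ni\y_i$,
\[
\mu\bigl(B(\y_i,2\beta\rho)\cap B(\x,(1-\beta)\rho)\bigr)
\le \mu\bigl(\cl_i^{(2\beta\rho)}\cap B(\x,(1-\beta)\rho)\bigr)
< C\Bigl(\tfrac{2\beta}{1-\beta}\Bigr)^{\gamma}\mu\bigl(B(\x,(1-\beta)\rho)\bigr).
\]
Summing over $i$ and using that the $B(\y_i,2\beta\rho)$ cover $K\cap B(\x,(1-\beta)\rho)$ gives
\[
\mu\bigl(B(\x,(1-\beta)\rho)\bigr) < N\cdot C\Bigl(\tfrac{2\beta}{1-\beta}\Bigr)^{\gamma}\mu\bigl(B(\x,(1-\beta)\rho)\bigr),
\]
and since $\x\in\supp\mu$ the left side is positive, so it cancels and $N > C^{-1}\bigl((1-\beta)/(2\beta)\bigr)^\gamma$. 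No shell estimate is needed. This is precisely what the paper does (phrased as a greedy argument: given $k$ disjoint $\beta\rho$-balls, cover each by a hyperplane slab of width $2\beta\rho$ inside $B(\x,(1-\beta)\rho)$; if $k < C^{-1}((1-\beta)/(2\beta))^\gamma$ one finds a further point of $K$ giving a $(k{+}1)$-st ball).
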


\begin{proof}
For any $\beta<1$, let $k$ be such that for sufficiently small $\rho$,
$\inf_{\x\in K}N_K(\beta,\x,\rho) \geq k$. For any $\x \in K$, take
$k$ hyperplane neighborhoods of width $2\beta$ contained in the ball
around $\x$ of radius $(1-\beta)\rho$.  
Their total measure is at most
$kC\frac{(2\beta)^\gamma}{(1-\beta)^\gamma}$ of the measure of the
ball $B\big(\x,(1-\beta)\rho\big)$. Thus if $k <
\frac{(1-\beta)^\gamma}{C(2\beta)^\gamma}$, there is a point of $K$
outside the union of these hyperplane neighborhoods and in
$B\big(\x,(1-\beta)\rho\big)$. This point can be the center of a new
$\beta\rho$-ball in $B(\x,\rho)$, and now we have $k+1$ disjoint balls
(since each hyperplane neighborhood contains a ball). Thus
$N_K(\beta,\x,\rho) \geq \frac{(1-\beta)^\gamma}{C(2\beta)^\gamma}$,
so for sufficiently small $\beta$ we get $N_K(\beta,\x,\rho) \geq
M\beta^{-\gamma}$ with some constant $M$. Now we apply Lemma
\ref{liorstheorem} which yields the desired dimension estimate. 
\end{proof}

We remark that the statement of the above lemma has been known, in
view of \cite[Proposition 5.2]{KW2}, for absolutely 
friendly measures, that is, under the additional Federer condition,
and the  lemma weakens the assumption 
to being just \absd.
Note also  that a similar argument was used in \cite{F} to prove 

\begin{lem}  \label{pl dim bound} \cite[Theorem 5.1]{F} Let $\mu$ be
  $\delta$-Ahlfors regular, let $K = \supp\,\mu$, and let $S\subset
  \rd$ be winning on $K$. Then $\dim(S\cap K \cap U)  = \delta =
  \dim(K \cap U)$ for every open set $U \subset \rd$ with $V \cap K
  \neq \varnothing$. 
\end{lem}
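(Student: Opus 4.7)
The plan is to handle the two inequalities in $\dim(S \cap K \cap U) = \delta$ separately. The upper bound $\dim(S \cap K \cap U) \le \delta$ is immediate: since $\mu$ is $\delta$-Ahlfors regular, the standard mass distribution argument shows $\dim(\supp\mu) \le \delta$, and this local bound carries over to the intersection with any open set. So the entire content of the lemma is the lower bound $\dim(S \cap K \cap U) \ge \delta$.

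To obtain the lower bound I would appeal to Lemma \ref{liorstheorem}, which reduces matters to verifying the packing estimate $N_K(\beta, \x, \rho) \ge M \beta^{-\delta}$ for some constant $M > 0$, all $\x \in K$, all sufficiently small $\rho$, and all sufficiently small $\beta$. This is a purely geometric/measure-theoretic fact about $\delta$-Ahlfors regular supports, independent of $S$. The standard approach is a maximal packing argument: fix $\x \in K$ and $\rho < \rho_0$ small, and take a maximal disjoint collection of balls $B(\x_j, \beta\rho)$ with centers $\x_j \in K \cap B(\x,(1-\beta)\rho)$, with cardinality $N$. By maximality the doubled balls $B(\x_j, 2\beta\rho)$ cover $K \cap B(\x,(1-\beta)\rho)$, so
\[
c_1 \bigl((1-\beta)\rho\bigr)^\delta \;\le\; \mu\bigl(B(\x,(1-\beta)\rho)\bigr) \;\le\; \sum_{j=1}^N \mu\bigl(B(\x_j, 2\beta\rho)\bigr) \;\le\; N\, c_2\, (2\beta\rho)^\delta,
\]
using \equ{pl} on both sides. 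Rearranging gives
\[
N \;\ge\; \frac{c_1(1-\beta)^\delta}{c_2 \, 2^\delta}\; \beta^{-\delta},
\]
which for all $\beta$ below a suitable threshold (say $\beta \le 1/2$) is at least $M\beta^{-\delta}$ with $M \df c_1/(c_2 \, 2^{\delta+1})$.

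With this estimate, Lemma \ref{liorstheorem} applies and yields $\dim(S \cap K \cap U) \ge \delta$ for every open $U$ with $U \cap K \ne \varnothing$. Combining with the upper bound finishes the proof. I do not expect any real obstacle here: the argument is essentially the one already used in Lemma \ref{absd dim bound}, with the Ahlfors regularity substituting in an even cleaner way (comparing $\mu$-measures of concentric balls instead of going through absolute decay of hyperplane neighborhoods), and producing the sharp exponent $\delta$ rather than the decay exponent $\gamma$.
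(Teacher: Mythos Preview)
Your argument is correct. The paper does not actually supply a proof of this lemma: it is simply quoted as \cite[Theorem 5.1]{F}, with the remark that ``a similar argument'' to the one used for Lemma~\ref{absd dim bound} establishes it. Your maximal-packing computation verifying the hypothesis \equ{lotsofballs} of Lemma~\ref{liorstheorem} with exponent $\delta$ is exactly the kind of argument intended, and is in fact cleaner than the absolute-decay version since Ahlfors regularity gives two-sided ball estimates directly.
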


In particular, the conclusion of the lemma holds whenever $S$ is
$k$-dimensionally absolute winning on $K$ as above. Combining with
Theorem \ref{stability}, 
we obtain

\begin{cor}
\label{incompr general}
Let $\mu$ be absolutely decaying and Ahlfors regular, and let
$S\subset \rd$ be $k$-dimensionally absolute winning; then   
$S$ is strongly $C^1$ incompressible on $\supp\,\mu$.
\end{cor}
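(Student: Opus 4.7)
The plan is to chain together the results established earlier in this section with those from \S\ref{stab} and \S\ref{fr}. Let $U \subset \rd$ be open with $U \cap K \ne \varnothing$ and let $\{f_i\}$ be a sequence of $C^1$ diffeomorphisms of $U$ onto open subsets of $\rd$. Since each $f_i$ is in particular a $C^1$ nonsingular map defined on $U$, Theorem \ref{stability} applies and gives that $T_i \df f_i^{-1}(S) \cup U^c$ is $k$-dimensionally absolute winning on all of $\rd$. The role of the $U^c$ complement is crucial here: it lets us use Theorem \ref{stability} verbatim while restricting intersection with $U$ at the end.

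Next I would transfer the winning property from $\rd$ to $K$. By Proposition \ref{diffuse}, $K = \supp\,\mu$ is hyperplane diffuse, hence $k$-dimensionally diffuse; Proposition \ref{cap K} then promotes each $T_i$ to be $k$-dimensionally absolute winning on $K$. Choosing $\beta$ small enough that $K$ is $k$-dimensionally $\beta$-diffuse and applying Lemma \ref{small enough}, each $T_i$ is $k$-dimensionally $\beta'$-absolute winning on $K$ for any $\beta' \le \beta/(2+\beta)$. Proposition \ref{cap} now gives that the countable intersection $T \df \bigcap_i T_i$ is also $k$-dimensionally $\beta'$-absolute winning on $K$ for such $\beta'$, and Proposition \ref{strongwinning} upgrades this to $\alpha$-winning on $K$ for some $\alpha > 0$.

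Finally, since $\mu$ is $\delta$-Ahlfors regular with $\delta = \dim(K)$, Lemma \ref{pl dim bound} yields $\dim(T \cap K \cap U) = \dim(K \cap U) = \delta$. Because each $f_i^{-1}(S)$ is contained in $U$, we have $T_i \cap U = f_i^{-1}(S) \cap U$ and hence
\[
T \cap K \cap U \;=\; \bigcap_{i=1}^\infty f_i^{-1}(S) \cap K,
\]
which gives the required equality $\dim\!\bigl(\bigcap_i f_i^{-1}(S) \cap K\bigr) = \dim(U \cap K)$. The argument is mostly a bookkeeping exercise — the only subtlety is recognizing that one must work with $f_i^{-1}(S) \cup U^c$ rather than $f_i^{-1}(S)$ itself in order to invoke Theorem \ref{stability} on $\rd$, and that restricting to $U$ at the final step removes the auxiliary $U^c$ without cost.
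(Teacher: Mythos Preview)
Your proof is correct and follows essentially the same route as the paper: the paper derives the corollary by combining Lemma~\ref{pl dim bound} with the chain of results already assembled in the proof of Theorem~\ref{stabilitydiffuse} (Theorem~\ref{stability}, Proposition~\ref{diffuse}, Proposition~\ref{cap K}, Lemma~\ref{small enough}, Proposition~\ref{cap}, Proposition~\ref{strongwinning}), and you have simply written out this chain explicitly, replacing the positive-dimension bound of Theorem~\ref{dim} with the full-dimension bound of Lemma~\ref{pl dim bound}.
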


Note that Theorem \ref{incompr} follows immediately from Corollary \ref{incompr general}
and Theorem \ref{BA}.
\smallskip


The following proposition, together with Proposition \ref{diffuse}, shows that the notions of  hyperplane diffuse sets and supports of absolutely decaying measures are closely related.

\begin{prop}
\label{decaying subset}
Let $K$ be a hyperplane diffuse subset of $\rd$ and $U \subset \rd$ open
with $U \cap K \neq \varnothing$.
Then there exists an absolutely decaying measure $\mu$ such that
$\supp\mu \subset K \cap U$.
\end{prop}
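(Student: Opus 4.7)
The plan is to build a Cantor-type subset $F \subset K \cap U$ via iterated application of Lemma \ref{equiv diffuseness}, and to show that the uniform Cantor measure $\mu$ on $F$ is absolutely decaying.

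Let $\beta$ be the diffuseness parameter of $K$, set $\beta' = \beta/(2+\beta)$ so that Lemma \ref{equiv diffuseness} applies, and fix a contraction ratio $\alpha$ with $0 < \alpha < \beta'/2$, to be chosen sufficiently small below. Pick $\x_\emptyset \in K \cap U$ and $r_0 \le \rho_K$ with $B(\x_\emptyset, r_0) \subset U$, and set $B_\emptyset = B(\x_\emptyset, r_0)$. Inductively, given $B_\sigma = B(\x_\sigma, \alpha^n r_0)$ centered on $K$ for $\sigma \in \{1,\ldots,d+1\}^n$, I construct $d+1$ children imitating the proof of Theorem \ref{dim}: set $\x_{\sigma 1} = \x_\sigma$, and for $i = 2,\ldots,d+1$ choose any affine hyperplane $\cl_i$ through $\x_{\sigma 1},\ldots,\x_{\sigma,i-1}$ (possible since $i-1 \le d$) and invoke Lemma \ref{equiv diffuseness} to get $\x_{\sigma i} \in K$ with $B(\x_{\sigma i}, \beta' \alpha^n r_0) \subset B_\sigma \setminus \cl_i^{(\beta' \alpha^n r_0)}$. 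The balls $B_{\sigma i} = B(\x_{\sigma i}, \alpha^{n+1} r_0)$ are then contained in $B_\sigma$ and, since $\alpha < \beta'/2$, pairwise disjoint. Put $F = \bigcap_n \bigcup_{|\sigma|=n} B_\sigma$ and $\mu(B_\sigma) = (d+1)^{-|\sigma|}$, which extends to a probability measure supported on $F$.

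The core geometric step is a uniform non-degeneracy estimate for the simplex $[\x_{\sigma 1},\ldots,\x_{\sigma,d+1}]$. Since each $\x_{\sigma i}$ is at distance $\ge \beta' r$ from $\cl_i$ (where $r = \alpha^n r_0$), and $\cl_i$ contains the affine flat spanned by the earlier vertices, these "heights" combine through the base-times-height formula to yield $d$-volume $\ge (\beta' r)^d/d!$. If every vertex lay within $\alpha r$ of a single hyperplane $\cl$, the simplex would be contained in $\cl^{(\alpha r)} \cap B_\sigma$, a slab of width $2\alpha r$ and cross-section $\le c_d r^{d-1}$, hence of $d$-volume at most $C_d \alpha r^d$. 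These bounds are incompatible once $\alpha < (\beta')^d/(C_d \, d!)$, which I arrange. Consequently, at most $d$ of the $d+1$ children centers of any $B_\sigma$ lie within $\alpha r$ of a given hyperplane.

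Iterating this "$d$ out of $d+1$" property across levels gives, for any hyperplane $\cl$, the discrete-scale bound $\mu\bigl(\cl^{(\alpha^n r_0)}\bigr) \le \bigl(d/(d+1)\bigr)^n$. Passing from discrete to continuous scales by rounding $\varepsilon$ to the nearest power of $\alpha$, and localizing from $B_\emptyset$ to an arbitrary ball $B(\x,\rho)$ with $\x \in F$ by descending to the smallest ancestor ball of $\x$ of radius comparable to $\rho$, yields $\mu\bigl(B(\x,\rho) \cap \cl^{(\varepsilon)}\bigr) \le C(\varepsilon/\rho)^\gamma \mu\bigl(B(\x,\rho)\bigr)$ with $\gamma = \log((d+1)/d)/\log(1/\alpha) > 0$. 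The main obstacle is the quantitative simplex non-degeneracy, because the inductive construction only controls each new vertex relative to one specific hyperplane, whereas the decay condition must hold uniformly across all hyperplanes; the volumetric comparison via the product of heights is what bridges this gap.
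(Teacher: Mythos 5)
Your construction of the measure coincides with the paper's: a $(d+1)$-ary Cantor scheme in which each new child center is produced by Lemma \ref{equiv diffuseness} so as to avoid a neighborhood of a hyperplane through its already-chosen siblings, mass is split uniformly, and the decay estimate \equ{ad} is obtained from the ``at most $d$ of $d+1$'' count by rounding $\varepsilon$ and $\rho$ to powers of the contraction ratio and using the separation of siblings to localize to one ancestor ball. Where you genuinely diverge is in the key geometric step, namely that a sufficiently thin neighborhood of an \emph{arbitrary} hyperplane meets at most $d$ of the $d+1$ children. The paper isolates this as Lemma \ref{hyp lemma} and proves it by a compactness/Hausdorff-limit argument, which yields a non-explicit admissible $\beta'$; you instead compare the $d$-volume of the simplex on the child centers (at least $(\beta' r)^d/d!$ by the iterated base-times-height formula, since each center lies at distance at least $\beta' r$ from the affine span of its predecessors, that span being contained in the hyperplane $\cl_i$ it was chosen to avoid) with the volume of a slab of width $O(\alpha r)$ intersected with $B_\sigma$. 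This is a correct and arguably cleaner substitute: it makes the admissible contraction ratio $\alpha$, and hence the exponent $\gamma$, explicit, at no real cost in generality. One constant needs adjusting: a child ball of radius $\alpha r$ that meets $\cl^{(\varepsilon)}$ with $\varepsilon \le \alpha r$ only forces its center to lie within $2\alpha r$ of $\cl$, not within $\alpha r$, so the slab in your volume comparison should be taken of width $4\alpha r$ and the threshold $\alpha < (\beta')^d/(2C_d\, d!)$ tightened accordingly; this is harmless and does not affect the structure of the proof.
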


Whether or not it is possible to construct an absolutely decaying measure $\mu$ with
$\supp\mu = K$ is an open question.

\smallskip
In order to prove the proposition, we will need 
the following

\begin{lem}
\label{hyp lemma}
Given $\beta_0 > 0$ there exists a positive $ \beta' < \beta_0$ such that,
for every $\x \in \rd$, $\rho > 0$, and $\y_1, \dots, \y_d \in B(\x,\rho)$
such that the balls $B(\y_i,\beta_0\rho)$ are contained in $B(\x,\rho)$ and are pairwise disjoint,
if a hyperplane $\cl$ intersects each ball $B(\y_i,\beta'\rho)$, then
$$B(\x,\rho) \cap \cl^{(\beta'\rho)} \subset B(\x,\rho) \cap \cl(\y_1,\dots,\y_d)^{(\beta_0\rho)},$$
where $\cl(\y_1,\dots,\y_d)$ is the hyperplane passing through the points $\y_i$.
\end{lem}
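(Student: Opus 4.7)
I would argue by compactness and contradiction. Since the statement is invariant under translation and rescaling, I first reduce to the case $\x = 0$, $\rho = 1$, so the whole picture lies inside $\overline{B(0,1)}$. Suppose for contradiction that no positive $\beta' < \beta_0$ works. Then there exist sequences $\beta'_n \to 0$, configurations $\y_{1,n},\dots,\y_{d,n}$ satisfying the hypotheses (pairwise disjoint $\beta_0$-balls inside $B(0,1)$), hyperplanes $\cl_n$ meeting every $B(\y_{i,n},\beta'_n)$, and witness points $\z_n \in B(0,1)\cap \cl_n^{(\beta'_n)}$ with $\dist\bigl(\z_n,\cl(\y_{1,n},\dots,\y_{d,n})\bigr) > \beta_0$.

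Using compactness of $\overline{B(0,1)}$ and of the space of affine hyperplanes meeting $\overline{B(0,1)}$ (parametrized by a unit normal and a signed offset in $[-1,1]$), I extract convergent subsequences $\y_{i,n} \to \y_i$, $\cl_n \to \cl_*$, $\z_n \to \z_*$. Since $\cl_n$ contains a point within $\beta'_n \to 0$ of each $\y_{i,n}$, the limiting hyperplane $\cl_*$ contains every $\y_i$; the same reasoning gives $\z_* \in \cl_*$. The pre-limit separation $\|\y_{i,n}-\y_{j,n}\| > 2\beta_0$ passes to $\|\y_i - \y_j\| \ge 2\beta_0$ in the limit, so the $\y_i$'s are in particular distinct.

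In the generic case where the limit $\y_1,\dots,\y_d$ are affinely independent, $\cl_*$ is the unique hyperplane through them, so $\cl_* = \cl(\y_1,\dots,\y_d)$. Continuity of the map sending an affinely independent $d$-tuple to its spanning hyperplane then gives $\cl(\y_{1,n},\dots,\y_{d,n}) \to \cl_*$, whence $\dist\bigl(\z_n,\cl(\y_{1,n},\dots,\y_{d,n})\bigr) \to \dist(\z_*,\cl_*) = 0$, contradicting the standing lower bound $\beta_0$. This gives the desired $\beta'$.

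The delicate point, and the step I expect to demand the most care, is the degenerate scenario in which the limit $\y_i$'s turn out to be affinely dependent. This can occur when $d \ge 3$ even under the separation $\|\y_i-\y_j\| \ge 2\beta_0$ (consider three nearly collinear points inside the unit ball). In this case $\cl(\y_1,\dots,\y_d)$ is no longer uniquely determined, and the pre-limit spanning hyperplanes $\cl(\y_{1,n},\dots,\y_{d,n})$ can fail to converge to $\cl_*$. To handle it, I would note that $\cl_n$ itself passes within $\beta'_n$ of all the $\y_{i,n}$, so in the marginally degenerate regime one may select $\cl(\y_{1,n},\dots,\y_{d,n})$, among the continuum of hyperplanes through these points, to lie close to $\cl_n$; this restores convergence of the spanning hyperplanes to $\cl_*$ and yields the same contradiction, completing the proof.
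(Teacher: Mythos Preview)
Your approach---rescale to the unit ball, assume failure along a sequence $\beta'_n\to 0$, and extract subsequential limits by compactness---is exactly the paper's, and you are in fact more careful than the paper in flagging the degenerate scenario in which the limit tuple $(\y_1,\dots,\y_d)$ is affinely dependent: the paper's proof passes over this in silence when it asserts that $\cl(\y_{1,n_j},\dots,\y_{d,n_j})$ converges to $\cl(\y_1,\dots,\y_d)$.

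Your proposed patch, however, does not work. You suggest selecting $\cl(\y_{1,n},\dots,\y_{d,n})$ from ``the continuum of hyperplanes through these points,'' but the phrase ``the hyperplane'' in the lemma presupposes that the $\y_{i,n}$ are affinely independent, so that hyperplane is unique and there is nothing to select. More seriously, the degenerate case cannot be repaired at all, because the lemma as stated is actually \emph{false} for $d\ge 3$. Take $d=3$, $\beta_0=\tfrac15$, and for any proposed $\beta'>0$ let $\y_1=(-\tfrac12,0,0)$, $\y_2=(0,0,0)$, $\y_3=(\tfrac12,\epsilon,0)$ with $0<\epsilon<\beta'$. The balls $B(\y_i,\beta_0)$ are pairwise disjoint and contained in $B(0,1)$, and the plane $\cl=\{z_2=0\}$ meets every $B(\y_i,\beta')$; yet $\cl(\y_1,\y_2,\y_3)=\{z_3=0\}$, and the point $(0,0,\tfrac12)\in B(0,1)\cap\cl$ lies at distance $\tfrac12>\beta_0$ from it. The lemma becomes true, and both your compactness argument and the paper's go through with no degenerate case to worry about, under an additional quantitative general-position hypothesis (for instance, that each $\y_{\ell+1}$ lies at distance at least $2\beta_0\rho$ from the affine span of $\y_1,\dots,\y_\ell$)---which is exactly how the points are produced in the lemma's only application in the paper.
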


\begin{proof}
By rescaling, it suffices to prove the case that $\x=0$ and $\rho = 1$.
If the statement fails then there exists a sequence of $d$-tuples 
$(\y_{1,n},\dots, \y_{d,n})$ as above and hyperplanes $\cl_n$ such that
$\cl_n$ intersects each $B(\y_{i,n},\frac1n)$ but
$$B(0,1) \cap \cl_n^{(\frac1n)} \not\subset B(0,1) \cap \cl(\y_{1,n},
	\dots,\y_{d,n})^{(\beta_0)}.$$
By the compactness of $B(0,1)$ there is a limit point $(\y_1,\dots,\y_d)$ of 
that
sequence of $d$-tuples, and the corresponding subsequence of hyperplanes
$B(0,1) \cap \cl_{n_j}$ converges to $B(0,1) \cap \cl(\y_1,\dots, \y_d)$ in the Hausdorff metric,
as does $B(0,1) \cap \cl(\y_{1,n_j},\dots, \y_{d,n_j})$.
Hence, for each $\varepsilon > 0$, there is some $m > 1/\varepsilon$
such that $\dist(\y_{i,m},\y_i) < \varepsilon$ for each $1\le i \le d$ and
\eq{hyp contain}{B(0,1) \cap \cl_m^{(\varepsilon)} \subset B(0,1) \cap \cl(\y_1,\dots,\y_d)^{(2\varepsilon)} \subset B(0,1) \cap \cl(\y_{1,m},\dots,\y_{d,m})^{(3\varepsilon)}.}
But, by the definition of $\cl_m$,
$$B(0,1) \cap \cl_m^{(\varepsilon)}\not\subset 
		B(0,1) \cap \cl(\y_{1,m},\dots,\y_{d,m})^{(\beta_0)}.$$
This contradicts \equ{hyp contain} as 
long as $3\varepsilon < \beta_0$.
\end{proof}

\begin{proof}[Proof of Proposition \ref{decaying subset}]
We will construct $\mu$ by iteratively distributing mass.
Let $\rho_K$ be as in Definition \ref{diffuseness} and
let $\beta_0 < 1/3$ be small enough to satisfy
\equ{def diff} with $\beta = \beta_0$.
Let $\beta \le \frac13\beta_0$ be small enough so that $\beta' = 2\beta$ is
as in Lemma \ref{hyp lemma}.
Let $\x_0 \in U \cap K$ and $0 < \rho_0 < \rho_K$ small enough to guarantee $B(\x_0,\rho_0) \subset U$.
Take $B(\x_0,\rho_0) = A_0$ and assign it mass $1$.
Now, given any $1 \leq \ell \leq d$ and points
$\y_1,\dots, \y_{\ell} \in A_0$, there is a hyperplane $\cl$ passing through these points,
and Lemma \ref{equiv diffuseness} guarantees the existence of 
a point $\y_{\ell +1}\in A_0 \cap K$ such that $B(\y_{\ell+1},\beta_0\rho_0) \subset A_0$
is disjoint from $\cl^{(\beta_0\rho_0)}$. Hence 
we can choose $\x_{1,1}, \dots, \x_{1,d+1} \in K$ with 
$B(\x_{1,i},\beta_0\rho_0) \subset A_0$ disjoint
such that $B(\x_{1,d+1},\beta_0\rho_0)$ is disjoint from
$\cl(\x_{1,1}, \dots, \x_{1,d})^{(\beta_0\rho_0)}$.

By Lemma \ref{hyp lemma}, this implies that for any hyperplane $\cl$, its neighborhood 
$\cl^{(\beta\rho_0)}$ can intersect at most
$d$ of the balls $B(\x_{1,i},\beta\rho_0)$. Iindeed, if it intersects the first $d$ of these
balls, then $\cl$ intersects the corresponding balls $B(\x_{1,i},\beta'\rho_0)$,
which implies
$\cl^{(\beta\rho_0)} \subset \cl(\x_{1,1},\dots,\x_{1,d})^{(\beta_0\rho_0)}$ which is disjoint
from $B(\x_{1,d+1},\beta_0\rho)$)
We will take $A_1^{(i)} = B(\x_{1,i},\beta\rho_0)$ and distribute the mass uniformly. 
Note that these
children of $A_0$ are separated by $2\beta\rho_0$ from each other.
Now suppose $A_{j,i} = B(\x_{j,i},\beta^j\rho_0)$ comprise the $j$th stage of the construction,
with each $\x_{j,i} \in K$.
For each $i$, we again use the diffuseness condition and the lemma to find
$\x_{j+1,\ell} \in K$, for $(i-1)(d+1) < \ell \le i(d+1)$, such that
$A_{j+1,\ell} = B(\x_{j+1,\ell},\beta^{j+1}\rho_0) \subset A_{j,i} $ are separated by
at least $2\beta^{j+1}\rho_0$ and $\cl^{(\beta^{j+1}\rho_0)}$ intersects at most $d$ of these balls
for any hyperplane $\cl$, and distribute the mass uniformly among them. 
It is clear that the support of the limit measure $\mu$
is contained in $K$. We claim it is absolutely decaying.

Let $\rho < \rho_0$, $\x \in \supp \mu$, $0 < \varepsilon < \frac12\beta\rho$, 
and $\cl$ a hyperplane.
Take $i, j \in \N$ such that
\eq{rho range}{2\beta^i\rho_0 \leq \rho < 2\beta^{i-1}\rho_0}
and
\eq{epsilon range}{\frac12\beta^{j+1}\rho \leq \varepsilon < \frac12\beta^{j}\rho.}
Since $\x \in \supp \mu$, it is contained in some stage-$i$ ball $A_i$
and some stage-$(i-1)$ ball $A_{i-1}$.
By \equ{rho range}, $B(\x,\rho) \supset A_i$,
and since $A_{i-1}$ is separated by $2\beta^{i-1}\rho_0$ from every other stage-$(i-1)$
ball, $B(\x,\rho) \cap \supp \mu \subset A_{i-1}$.
Since $\varepsilon < \beta^{i}\rho_0$,
the $\varepsilon$-neighborhood of $\cl$ intersects at most $d$ of the children of $A_{i-1}$. 
Since the measure is distributed uniformly among them, these
children carry $\frac{d}{d+1}$ of the measure of $A_{i-1}$. Similarly,
by \equ{epsilon range}, for each $\ell \leq j$, $\cl^{(\varepsilon)}$ intersects at most $d$ of the
children of each stage-$(i+j-2)$ ball. It thus follows that
\begin{align*}
\mu\big(B(\x,\rho\big) \cap \cl^{(\varepsilon)}) &\leq \left(\frac{d}{d+1}\right)^{j}\mu(A_{i-1})\\
	 & = \left(\frac{d}{d+1}\right)^{j}(d+1)\mu(A_{i}) \le \left(\frac{d}{d+1}\right)^{j}(d+1)\mu\big(B(\x,\rho)\big)\,.\\
\end{align*}
But, taking $\gamma = \frac{\log [d/(d+1)]}{\log \beta} > 0$, we have by \equ{epsilon range} that
$$\left(\frac{d}{d+1}\right)^j = \beta^{j\gamma} \le 2^\gamma\beta^{-\gamma}\left(\frac{\varepsilon}{\rho}\right)^\gamma.$$
Hence, \equ{ad} holds for $C = 2^\gamma\beta^{-\gamma}(d+1)$ whenever 
$\varepsilon < \frac12\beta\rho$. But if $\varepsilon \ge \frac12\beta\rho$,
then 
$C\left(\frac{\varepsilon}{\rho}\right)^\gamma$ is not less than $1$, therefore $\equ{ad}$ holds trivially.
Thus $\mu$ is absolutely decaying.
\end{proof}

\ignore{
\begin{prop}
\label{cap}
The countable intersection of $\kappa$-absolute winning subsets of a $\kappa$-diffuse set $K$ 
is $\kappa$-absolute winning on $K$. \end{prop}
\begin{proof} The proof is exactly the same as Schmidt's original
  proof \cite[Theorem 2]{S1} for $\alpha$-winning sets, except for our
  different convention of allowing $\cap B_k$ intersecting $S$ instead
  of $\cap B_k \subset S$, but obviously the proof still goes
  through. 
\end{proof}

The following theorem demonstrates that the definition of $k$-dimensional absolute winning possesses a strong stability property, not shared by weaker definitions of winning (and yet it includes some important sets).\comdmitry{Wait, do we know that it is not shared by weaker definitions of winning?}

 \begin{thm}
\label{stability1} If $S \subset \rd$ is a $\kappa$-absolute winning
set and $f: \rd \to \rd$ is a homeomorphism that is also a
$C^1$-diffeomorphism\footnote{From the proof and the nature of the
  game it is clear that the hypotheses can be weakened. Namely, $f$
  can be allowed to not be a diffeomorphism on some set, as long as
  that set's intersection with any ball is contained in a finite union
  of $\kappa$-codimension subspaces which we can easily avoid. This
  comes into play with the map $(x,y,z) \mapsto (x^3,y^3,z^3)$ which
  has a singularity at the origin, or when gluing together maps
  defined on separate parts of $\rd$, when the gluing is along
  $\kappa$-codimension subspaces. Also, the derivative of $f$ need not
  be continuous as long as it is bounded on balls.}, then $f(S)$ is
also  
$\kappa$-absolute winning.
\end{thm}

\begin{cor} If $S \subset \rd$ is an $\kappa$-absolute winning set and $f: \rd \to \rd$ is a homeomorphism that is also a $C^1$-diffeomorphism, then $f(S)$ is $\kappa$-absolute winning on any absolutely decaying set $K$.
\end{cor}

The following examples show that several important sets are $1$-absolute winning.


\begin{example} 
\label{Dani}
In $\rn$, the $\tilde E(R,y)$ sets (defined below) are $1$-absolute winning.
\end{example}
}


\ignore{
We now define the sets of Example \ref{Dani} and state a more precise version. For a self-map $f$ of $\rn$, let 
$$E(f,y) = \{x \in \rn : \dist(f^jx, y + \zn) > c\text{ for some } c(x) > 0 \text{ and all } j \in \N\}.$$
\begin{thm}
If $R \in GL(n,\Q)$ is semisimple with at least one eigenvalue greater than or equal to one, 
and $y \in \rn$,
then $E(R,y)$ is $1$-absolute winning on $\rn$.
\end{thm}
}

\section{Concluding remarks}\label{remarks}


\subsection{Winning sets and strong incompressibility} 
The main result of the paper specifies a strengthening of the winning
property which enables one to deduce strong $C^1$ incompressibility,
and, in particular, which is invariant under $C^1$ diffeomorphisms. We
remark here that the class of sets winning in Schmidt's original 
version of the game does not have such strong invariance
properties. In other words, it is possible 
to  exhibit an example of a winning subset of $\R^2$ whose
diffeomorphic image is not winning.  
Here is one construction, motivated by \cite[\S 4]{Mc}. Given
$a \in \N$ and $\theta > 1$,  
we denote by $P_{a,\theta}$ the  {\sl almost arithmetic progression\/}
which starts with $a$ and has difference $\theta$, that is,  
$$
P_{a,\theta} \df \{\lceil a + j\theta\rceil : j = 0,1,\dots\}\,.
$$
Then consider
\eq{counterex}{S =\{(x,y) \in \R^2 : \exists\,a \in \N\text{ and
  }\theta > 1\text{ such that }x_i = y_i = 0\text{ whenever }i \in
  P_{a,\theta}\}\,,} 
where  $x_i,y_i$ are the digits of the base 3 expansions of $x,y$.
We also let $$\tilde S =\{(x,y) \in \R^2 : x_i = y_i = 0 \text{ for some }i \in \N\}\,.$$ 
Clearly $S$ is a subset of $\tilde S$; it is not hard to see that $S$
has Lebesgue measure zero, and $\tilde S$ has full Lebesgue measure. 

\ignore{. It would be interesting 
to find out whether or not it is possible to construct a winning
subset $S$ of $\rd$ which is  not strongly  incompressible. Or maybe
even for which  the intersection \equ{inters}, with no uniform bound
on bi-Lipschitz norms of $f_i$, can be empty. (Recall that it is
impossible for $d=1$, as proved by Schmidt in \cite{S1}.) 

However it is not hard to construct an example of a winning subset $S$ of $\R^2$ and a family of nonsingular linear
self-maps of $\R^2$ such that the intersection \equ{inters} is not winning.
\ignore{The following example shows that this stability property does not hold for\comdmitry{Following Barak's suggestion, I removed the concept of strong winning, so the exposition of this example should be modified.}
the classes of winning sets and strong winning sets. }
Indeed, let $$S =\{(x,y) \in \R^2 : x_i = y_i = 0 \text{ for some }i \in \N\}\,,$$ where  $x_i,y_i$ are the digits of the base 3 expansions of $x,y$. It is easily seen that $S$ is $1/9$-strong winning.}

\begin{prop} \label{example}\begin{itemize}
\item[(a)] $S$ (and therefore $\tilde S$) is winning. 
\item [(b)] Define $f_n(x,y) \df (3^{-n}x,y)$. Then $f_n(\tilde S)$ is not $\frac{4}{3^n}$-winning, and hence neither is $f_n(S)$.
\end{itemize}\end{prop}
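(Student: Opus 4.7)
For part (a), I will exhibit a winning strategy for Alice. Given $\beta\in(0,1)$, choose $\alpha>0$ sufficiently small (e.g.\ $\alpha\le 1/(12\sqrt{2})$). At each stage $k$ of the $(\alpha,\beta)$-game, let $i_k$ be the unique integer such that $\rho_k$ lies in the critical window $[2\sqrt{2}\cdot 3^{-i_k},\,3^{-i_k}/(2\alpha)]$; for $\alpha$ in this range, the windows for consecutive levels cover $(0,\infty)$, so $i_k$ is defined. Since $\rho_k\ge 2\sqrt{2}\cdot 3^{-i_k}$, the disk $B_k$ contains (regardless of its position) a $2$-dimensional ``good box''---a square of side $3^{-i_k}$ on which the $i_k$-th base-$3$ digits of both coordinates vanish---and since $\rho_k\le 3^{-i_k}/(2\alpha)$, Alice's disk of radius $\alpha\rho_k$ fits inside such a good box. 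She plays there, forcing $x_{i_k}=y_{i_k}=0$ in the limit. As $\rho_k$ shrinks by the factor $\alpha\beta$ per stage, $i_{k+1}-i_k\approx\log_3(1/(\alpha\beta))$, so $\{i_k\}$ is (up to finitely many initial terms) an almost arithmetic progression with common difference $\theta=\log_3(1/(\alpha\beta))$, hence $(x,y)\in S$.

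For part (b), I will exhibit Bob's winning strategy in the $(4/3^n,\beta)$-game on $f_n(\tilde S)$, choosing $\beta\le 2/3^n$. The key geometric observation is that a ``good box'' for $f_n(\tilde S)$ at level $i$ is a $3^{-n-i}\times 3^{-i}$ rectangle, and for Alice's disk of diameter $8\rho/3^n$ to fit in its narrow $u$-dimension requires $\rho\le 3^{-i}/8$; yet at this radius Bob's disk (radius $\le 3^{-i}/8 < 3^{-i}/\sqrt{2}$) is too small to contain such a rectangle. Consequently Alice cannot commit both $u_{n+i}=0$ and $y_i=0$ in a single move, and is forced to employ a two-stage strategy: at some stage $k$ her move $A_k$ lies in a $y$-$0$-interval at level $i$ (which requires $\rho_k\ge 2\cdot 3^{-i}$ so that $B_k$ contains such an interval and $\rho_k\le 3^{n-i}/8$ so that her disk fits).

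Bob's counter: at the immediately following stage $k+1$, Bob positions the $u$-projection of $B_{k+1}$ inside the union of a $1$-interval and $2$-interval at level $n+i$, of total length $2\cdot 3^{-n-i}$. For $\beta\le 2/3^n$, this $u$-projection has length $8\beta\rho_k/3^n\le 2\cdot 3^{-n-i}$ and so fits; meanwhile the $u$-projection of $A_k$ has length $8\rho_k/3^n\ge 16\cdot 3^{-n-i}$, exceeding the threshold $5\cdot 3^{-n-i}$ which guarantees containing some such $1$-or-$2$ region regardless of Alice's placement. Since all subsequent balls are nested inside $B_{k+1}$, we get $u_{n+i}\in\{1,2\}$ in the limit. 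Applying this reactive counter to every level Alice forces, Bob ensures that for every $i$ with $y_i=0$ in the limit one has $u_{n+i}\ne 0$, whence $(u,y)\notin f_n(\tilde S)$.

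The main obstacle is verifying consistency when Alice forces several different levels $i_1<i_2<\cdots$ over the course of the game: Bob's reactive $u$-placements at the corresponding stages must be mutually compatible with the nesting of his balls. This works because any $1$-or-$2$ region at the shallower level $n+i_j$ contains many $1$-or-$2$ sub-regions at every deeper level $n+i_{j'}$ (with $j'>j$), and the estimate $8\rho_{k_{j'}}/3^n\ge 16\cdot 3^{-n-i_{j'}}$ on the $u$-projection of $A_{k_{j'}}$ remains sufficient to locate such a sub-region within the nested hierarchy.
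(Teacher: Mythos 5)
In part (a) you are following the paper's strategy (one forced ``good square'' per move, at a level read off from the current radius), but the final step has a gap. Membership in $S$ requires the set of forced positions to contain $P_{a,\theta}=\{\lceil a+j\theta\rceil: j\ge 0\}$ for some \emph{natural number} $a$ and some $\theta>1$, whereas your levels $i_k$ are defined greedily from $\rho_k$ and are only shown to satisfy $i_{k+1}-i_k\approx\log_3\big(1/(\alpha\beta)\big)$. A sequence of the form $\lceil c+k\theta\rceil$ with $c$ real need not equal, nor contain, any $P_{a,\theta'}$ with $a\in\N$ (its gap pattern can disagree with every such progression), and since $\{i_k\}$ has the same density as any candidate progression, containment essentially forces equality. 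The paper sidesteps this by fixing the progression \emph{first}: it chooses an integer $a$ from the initial radius, sets $\theta=-\log_3(\alpha\beta)$, and then verifies at each stage that the level $i=\lceil a+k\theta\rceil$ lies in the feasible window (ball large enough to contain the good square, Alice's disk small enough to fit inside it). Your argument is repaired the same way, possibly at the cost of a slightly smaller $\alpha$; so this defect is real but fixable.

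Part (b) is where you genuinely diverge from the paper, and the reactive scheme does not work as stated. Your Bob blocks $u_{n+i}$ only after detecting that Alice has ``committed'' to $y_i=0$, with commitment meaning $A_k\subset\{y_i=0\}$ while $\rho_k\in[2\cdot 3^{-i},\,3^{n-i}/8]$. But the lower bound $\rho_k\ge 2\cdot 3^{-i}$ is not necessary for $A_k$ to lie in a $y_i=0$ interval, and, more importantly, the limit point can acquire $y_i=0$ with no detectable commitment at all: the nested balls can stay close to the boundary of a $y_i=0$ interval and only visibly enter it at scales far below $3^{-(n+i)}$, i.e.\ after the digit $u_{n+i}$ has already been determined, when Bob can no longer block it. In the extreme case of an Alice who never visibly commits, your Bob takes no action whatsoever, yet $f_n(\tilde S)$ has full measure, so he certainly has no guarantee of landing outside it. There is also a bandwidth problem: each round of the game resolves on the order of $2n$ new digit positions, several of which can simultaneously acquire $y_i=0$, and a single ball $B_{k+1}$ cannot in general be placed with $u_{n+i}\in\{1,2\}$ for all of them (for the deepest such levels the surviving intervals have length about $2\cdot 3^{-n}\alpha\beta\rho_k$, which is far shorter than the $2\alpha\beta\rho_k$ your ball needs). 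The paper's Bob is proactive and needs no detection: with $\alpha=4\cdot 3^{-n}$ and $\beta=\frac14 3^{-n}$, so that $\alpha\beta=3^{-2n}$, each of his moves forces a full block of $n$ digits equal to $1$ in \emph{both} coordinates, on the blocks $\big((2k-1)n,\,2kn\big]$; the offset by $n$ between the two coordinates of a point of $f_n(\tilde S)$ then guarantees for every $i$ that either $y_i=1$ or $u_{i+n}=1$. I recommend replacing the reactive counter by this block strategy.
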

\begin{proof}
For (a), take $\alpha = 1/108$ and, without loss of generality, assume the radius of $B_1$ is $\rho < \frac{1}{6\alpha}$. Then 
 let $a$ be the integer satisfying
$$\frac{1}{6\cdot 3^{a+1}} \leq \alpha\rho < \frac{1}{6\cdot 3^a}.$$
This implies $\rho \ge 18 \cdot 3^{-(a+1)} = 2 \cdot 3^{1-a}$.
Hence, $B_1$ contains a square of
sidelength $3^{-a}$ consisting of pairs $(x,y)$ with $x_a = y_a = 0$.
Since $\alpha\rho < \frac{1}{2\cdot 3^a}$, Alice can choose $A_1$ to be contained in this
square.

Now take $\theta = -\log_3 (\alpha\beta)$, and assume $A_k$ has been chosen so that
for all $(x,y) \in A_k$ and $0 \leq j <k$ we have $x_i = y_i = 0$ for $i = \lceil a + \theta j\rceil$. 
Then the radius of $B_{k+1}$ is
$$(\alpha\beta)^k\rho = 3^{-\theta k} \rho \geq 2\cdot 3^{1-a-k\theta}.$$
Let $i = \lceil a + k\theta\rceil$. Then $i \geq a + k\theta$, so
$$2\cdot 3^{1-i} \leq 2\cdot 3^{1-a-k\theta}\leq (\alpha\beta)^k\rho.$$
Thus, $B_k$ must contain a square of sidelength $3^{-i}$ consisting of
pairs $(x,y)$ with $x_i= y_i = 0$.
Furthermore, $i < a + k\theta + 1$, so
$$(\alpha\beta)^k\alpha\rho < \frac{1}{6\cdot 3^{a+k\theta}} \leq \frac{1}{2\cdot 3^{i}}\,.$$
Therefore Alice can choose $A_k$ to be contained in this square.
This implies that $(x,y) \in \cap A_k$ must satisfy $x_i = y_i = 0$ for all
$i \in P_{a,\theta}$. 

Next, we give a strategy for Bob with target set $f_n(\tilde S)$, with 
$\alpha = 4\cdot 3^{-n}$ and $\beta = \frac{1}{4}3^{-n}$.
We will show that Bob can play the game in such a way that, regardless of Alice's
play, we will have for each $i \in \N$ either $x_{i+n} = 1$ or $y_i = 1$.
Bob begins by choosing $B_0$ to be a ball of radius $1/2$ consisting of pairs $(x,y)$
with $x_0 = y_0 = 1$.
Note that the radius of $A_k$ is $\frac{\alpha(\alpha\beta)^{k-1}}{2} = \frac{2}{3^{2n(k-1)+n}}$,
so it contains a square of sidelength $3^{-2nk +n-1}$ consisting of
points $(x,y)$ satisfying $x_{2nk -n+1} = y_{2nk-n+1} = 1$. 
Also, the radius of $B_{k+1}$ is $\frac{(\alpha\beta)^k}{2} = \frac{1}{2\cdot 3^{2nk}}$,
so $B_{k+1}$ can be chosen so that $x_i = y_i = 1$
for $(2k - 1)n < i \le 2kn$. Let $(x,y) \in \cap B_k$ and $i \in \N$.
Let $m = 0,1,\dots$ be such that $mn < i \le (m+1)n$.
If $m$ is odd, then $m = 2k-1$ for some $k \in \N$ and $y_i = 1$.
If $m$ is even, then $m = 2(k-1)$ for some $k \in \N$,
therefore $(2k-1)n <  i +n \le 2kn$ and $x_{i+n} = 1$.
Hence, there does not exist an $i \in \N$ for which $x_{i+n} = y_i = 0$,
so $(x,y) \not\in f_n(S)$.
\end{proof}

The above  argument also shows that there exists a $C^\infty$
diffeomorphism $f$ of $\R^2$ (constructed so that for each $m$ there
exists a strip $n \leq x \leq n+1$ on which $f^{-1}=f_m^{-1}$) such
that $f(\tilde S)$ is not winning.  However, it is clear that $\tilde
S$ is of full Lebesgue measure, and hence so are all its diffeomorphic
images considered above and their countable intersections. Therefore
it cannot serve as a counterexample to the strong affine
incompressibility. Whether or not such a counterexample is furnished
by $S$ as in \equ{counterex}, that is, whether or not the intersection
\equ{inters} for this $S$ has  \hd\ less than $2$, is not clear to the
authors. 
 In fact it would be interesting  
to find out whether or not there exists a winning subset $S$ of $\rd$
which is  not strongly (strongly affinely, strongly $C^1$)
incompressible. Or maybe even for which  the intersection
\equ{inters}, with no uniform bound on bi-Lipschitz norms of $f_i$,
can be empty (this is impossible for $d=1$, as proved by Schmidt in
\cite{S1}). 

\subsection{VWA is strongly incompressible}\name{vwa}
A straightforward application of the Baire category theorem
shows that $\bigcap_i f_i^{-1}(S)$ is nonempty whenever $S$ is residual and the $f_i$ are
homeomorphisms; however this does not imply 
that it has full
Hausdorff dimension, and thus does not imply
incompressibility. 
K.\ Falconer \cite{Falconer} introduced a theory which
implies lower bounds on the 
dimension of $\bigcap_i
f^{-1}_i(S)$ for
certain residual sets $S$. These ideas were developed
further by A.\ Durand in \cite{durand}, 
see also references therein. We illustrate these results by
exhibiting another 
incompressible set arising in 
Diophantine approximation. Namely, for $\tau > 0$ denote by $J_{d,\tau}$ the set of {\sl $\tau$-approximable\/} vectors in $\rd$, that is, 
 $$J_{d,\tau}\df \left\{\x\in\rd : \text
{ there are infinitely many } \mathbf{p} \in \Z^d,\,
q \in \N\text{ with }\left\|\x -
  \frac{\mathbf{p}}q\right\| < \frac1{q^{\tau}}\right\}\,.$$It is well known that when $\tau > \frac{d+1}d$, the sets\  $J_{d,\tau}$
  has Lebesgue measure zero, and $\dim( J_{d,\tau})$ tends to $d$ as  $\tau \to \frac{d+1}d$.
  It was shown by Durand, see  \cite[Thm.\ 1 and Prop.\ 3]{durand}, that for any nonempty open $U\subset \R^d$
  and a sequence $f_1, f_2, \ldots$
of bi-Lipschitz maps $U \to \R^d$, the \hd\ of the intersection $\ \bigcap_i f_i^{-1}(J_{d,\tau})$ 
is equal to  $\dim( J_{d,\tau})$. Thus, if one denotes by $\mathbf{VWA}_d \df \bigcup_{\tau> \frac{d+1}{d}}J_{d,\tau}$ the set of {\sl very well approximable\/} vectors in $\R^d$, it follows that $$\dim\left( \bigcap_i f_i^{-1}(J_{d,\tau})\right) = d\,;$$ that is,  $\mathbf{VWA}_d $ is strongly incompressible.

Note however that  $\mathbf{VWA}_d $ is not a winning set, since it is contained in $\rd\ssm\BA_d$. 
Furthermore, the analogue of Theorem \ref{incompr} does not hold for $\mathbf{VWA}_d $: indeed, using Lemma \ref{simplex} it is not hard to find a closed set $K$,  supporting an absolutely decaying and Ahlfors regular measure, which is contained in   $\BA_d$ (see \cite{KW1} where such constructions are explained), so that $K\cap \mathbf{VWA}_d = \varnothing$.

The example described above brings up a  natural open question: is the set $\BA_d$ (together with
its cousins defined in terms of toral endomorphisms) strongly incompressible? that is, can one 
weaken the $C^1$ assumption on the maps $f_i$ to just bi-Lipschitz? 
The methods of the present paper do not seem to be enough to answer this question. 

\ignore{
recall that $\x \in \R^d$ is called {\sl very well approximable\/} if there is some
$\varepsilon>0$ such that there are infinitely many solutions $\mathbf{p} \in \Z^d,
q \in \N$ to the inequality $\displaystyle{\left\|q\x -
  \mathbf{p}\right\| \leq \frac{1}{q^{1/d+\varepsilon}}}$. We denote the
set of very well approximable vectors in $\R^d$ by $\mathbf{VWA}_d$. We have:

\begin{prop}
\cite{durand}\name{VWA incompressible}
The set $\mathrm{VWA}_d$ is strongly incompressible. 
\end{prop}

\begin{proof}
Let $U\subset \R^d$ be open and let $f_1, f_2, \ldots$ be a sequence
of bi-Lipschitz maps $U \to \R^d$.  
In the notation of \cite{durand}, we have 
$$\mathrm{VWA}_d = \bigcup_{\tau> \frac{d+1}{d}}J_{d,\tau}.$$
For each $i$ and each $\tau > (d+1)/d$, by \cite[Prop. 3]{durand}, $J_{d,\tau} \in
G^{\mathrm{Id}^{(d+1)/\tau}}(f_i(U))$, and by \cite[Thm. 1]{durand}, this
implies that $\dim \, \bigcap_i f_i^{-1}(J_{d,\tau}) \geq
\frac{d+1}{\tau}.$ Hence 
$$\dim \, \bigcap_i f_i^{-1}(\mathrm{VWA}_d) \geq \frac{d+1}{\tau}
\longrightarrow_{\tau \to (d+1)/d} d.$$
\end{proof}
}
\medskip

\ignore{
From the proof it is clear that Bob's strategy will also work for the image under $f_n$
of a larger set, one of full Lebesgue measure, namely
$$S' \df \{(x,y) \in \R^2 : x_i = y_i = 0 \text{ for some }i \in \N\}.$$
It is easy to see that this set is $1/9$-strong winning, and so we have the following
\begin{thm}
$S'$ is strong winning but the image under some $C^1$-diffeomorphism is not winning.
\end{thm}
}






{\bf Acknowledgements.} We are 
thankful to Mike Hochman, Curt McMullen and Keith
Merrill for useful discussions, and to Arnaud Durand for pointing out the
relevance of \cite{durand}. We gratefully acknowledge the support
of the Binational Science Foundation, Israel Science Foundation, and
National Science Foundation through grants 2008454, 190/08, and DMS-0801064 respectively. 

\bibliographystyle{alpha}

\end{document}